\newtheorem{theorem}{Theorem}[section]
\newtheorem{lemma}[theorem]{Lemma}
\newtheorem{remark}[theorem]{Remark}
\newtheorem{corollary}[theorem]{Corollary}
\numberwithin{theorem}{section}
\numberwithin{equation}{section}
\numberwithin{figure}{section}
\numberwithin{table}{section}
\let\isout\sout \renewcommand{\sout}[1]{\ifmmode\text{\isout{\ensuremath{#1}}}\else\isout{#1}\fi}
\colorlet{inlinkcolor}{green!50!black}
\colorlet{exlinkcolor}{red!50!black}
\newcommand\myslbox{\diagbox[width=4.7em,height=\line]{\phantom{x}}{\phantom{x}}}
\newcommand{\abs}[1]{\left\vert #1 \right\vert}
\newcommand{\norm}[1]{\left\Vert #1\right\Vert}
\newcommand{\He}[1]{\left\Vert{\hskip -2.7pt}\left\vert #1 \right\vert{\hskip -2.7pt}\right\Vert}
\newcommand{\he}[1]{\big\vert\kern-0.25ex\big\vert\kern-0.25ex\big\vert #1 \big\vert\kern-0.25ex\big\vert\kern-0.25ex\big\vert}
\newcommand{\Lt}[2]{\left\Vert #1\right\Vert_{0,#2}}
\newcommand{\LtD}[1]{\left\Vert #1\right\Vert_{0}}
\newcommand{\Ho}[2]{\left\Vert #1\right\Vert_{1,#2}}
\newcommand{\Ht}[2]{\left\Vert #1\right\Vert_{2,#2}}
\newcommand{\sHt}[2]{\left\vert #1\right\vert_{2,#2}}
\newcommand{\ine}[2]{\langle #1,#2 \rangle_e}
\newcommand{\inOm}[2]{(#1,#2)_{\Omega}}
\newcommand{\Linf}[1]{\norm{#1}_{L^{\infty}(\Omz)}}
\newcommand{\B}{\mathcal{B}}
\newcommand{\D}{\mathcal{D}}
\newcommand{\R}{\mathbb{R}}
\newcommand{\Z}{\mathbb{Z}}
\newcommand{\N}{\mathbb{N}}
\newcommand{\Cm}{\mathbb{C}}
\newcommand{\EhI}{\mathcal{E}_h^I}
\newcommand{\Th}{\mathcal{T}_h}
\newcommand{\Jn}{J_{n}}
\newcommand{\Hn}{H_n^{(1)}}
\newcommand{\hu}{\hat{u}}
\newcommand{\hR}{\hat{R}}
\newcommand{\hC}{\hat{C}}
\newcommand{\tr}{\tilde{r}}
\newcommand{\tu}{\tilde{u}}
\newcommand{\ttn}{\tilde{t}}
\newcommand{\thR}{\tilde{\hat{R}}}
\newcommand{\Om}{\Omega}
\newcommand{\hOm}{\hat{\Omega}}
\newcommand{\hGamma}{\hat{\Gamma}}
\newcommand{\cni}{C_{\mathrm{Nir}}}
\newcommand{\intRhR}{\int_R^{\hR}}
\newcommand{\intrR}{\int_r^R}
\newcommand{\intr}{\int_0^{r}}
\newcommand{\inthR}{\int_0^{\hR}}
\newcommand{\al}{\alpha}
\newcommand{\be}{\beta}
\newcommand{\si}{\sigma}
\newcommand{\siz}{\sigma_0}
\newcommand{\de}{\delta}
\newcommand{\na}{\nabla}
\newcommand{\Ga}{\Gamma}
\newcommand{\pa}{\partial}
\newcommand{\ta}{\theta}
\newcommand{\vp}{\varphi}
\newcommand{\supp}{\mathrm{supp}\, }
\renewcommand{\i}{{\rm\mathbf i}}
\newcommand{\OchO}{\Omega\cup\hat{\Omega}}
\newcommand{\cb}[1]{{\color{blue}#1}}
\newcommand{\diam}{\mathrm{diam}\,}
\newcommand{\ls}{\lesssim}
\newcommand{\gs}{\gtrsim}
\newcommand{\mL}{\mathcal{L}}
\newcommand{\uinc}{u_{\rm inc}}
\newcommand{\oneo}{{\bf 1}_{\Omz}}
\newcommand{\Mf}{M(f)}
\newcommand{\vep}{\varepsilon}
\newcommand{\Omz}{\Omega_{0}}
\newcommand{\dist}{\mathrm{dist}\,}
\newcommand{\EPML}{\mathcal{E}^{\rm PML}}
\newcommand{\anl}{a^{\rm NL}}
\newcommand{\cu}{\check{u}}
\newcommand{\eq}[1]{\begin{align}#1\end{align}}
\newcommand{\eqn}[1]{\begin{align*}#1\end{align*}}
\newcommand{\Ap}[1]{\left\{ #1 \right\}} 
\newcommand{\BgA}[1]{\Big\{ #1 \Big\}}
\newcommand{\Sp}[1]{\left( #1 \right)} 
\newcommand{\bgS}[1]{\big( #1 \big)}
\newcommand{\BgS}[1]{\Big( #1 \Big)}
\newcommand{\qaq}{\quad\mbox{and}\quad}
\title{Finite Element Method for a Nonlinear PML Helmholtz Equation \\ 
with High Wave Number}
\newcommand{\email}[1]{\protect\href{mailto:#1}{#1}}
\author{
  Run Jiang\thanks{Department of Mathematics, Nanjing University, Jiangsu, 210093, People's Republic of China (\email{dz1821002@smail.nju.edu.cn}, \email{liyonglin@smail.nju.edu.cn}, \email{hjw@nju.edu.cn}). This work 
  of these two authors was partially supported by the NSF of China under grants 12171238 and 11525103.}
  ,~~Yonglin Li\footnotemark[1] \thanks{Institute of Computational Mathematics and Scientific/Engineering Computing, Academy of Mathematics and Systems Science, Chinese Academy of Sciences, Beijing, 100190. 
Department of Applied Mathematics, The Hong Kong Polytechnic University, Hung Hom, Hong Kong. People's Republic of China. The work of YL was partially supported by CAS AMSS-PolyU Joint Laboratory of Applied Mathematics.
}
  ,~~Haijun Wu\footnotemark[1]
  ~~and~~Jun Zou\thanks{Department of Mathematics, The Chinese University of Hong Kong, Shatin, N.T., Hong Kong, P. R. China (\email{zou@math.cuhk.edu.hk}). 
The work of JZ was substantially supported by Hong Kong RGC General Research Fund (projects 
14306719 and 14306718).}
}
\date{}  
\begin{document}
\maketitle

\begin{abstract}
  A nonlinear Helmholtz equation (NLH) with high wave number and Sommerfeld radiation condition is 
 approximated by the perfectly matched layer (PML) technique and 
 then discretized by the linear finite element method (FEM).
 Wave-number-explicit stability and regularity estimates and the exponential convergence  are proved for the nonlinear truncated PML problem.
Preasymptotic error estimates are obtained for the FEM, where  the logarithmic factors in $h$ required by the previous results for the NLH with impedance boundary condition are removed in the case of two dimensions. Moreover, local quadratic convergences of the Newton's methods are derived for both the NLH with PML and its FEM. 
Numerical examples are presented to verify the accuracy of the FEM, which demonstrate 
that the pollution errors may be greatly reduced by applying the interior penalty technique with proper penalty parameters to the FEM. The nonlinear phenomenon of optical bistability can be successfully simulated.
\end{abstract}

{\bf Key words.} 
  Nonlinear Helmholtz equation, high wave number, perfectly matched layer, Newton's method,  finite element method, preasymptotic error estimates.

{\bf AMS subject classifications. }
65N12, 
65N15, 
65N30, 
78A40  
 
\section{Introduction}\label{sec:Introduction}
We are mainly concerned in this work with the following nonlinear Helmholtz equation (NLH) which may model 
some optical wave scattering 
by a nonlinear medium with a Kerr-type nonlinearity \cite{boyd2008,wuzou2018,fi2001}: 
\begin{alignat}{2}
-\Delta u - k^2 u - k^2\vep\oneo\abs{u+\uinc}^2(u+\uinc) &= f &\quad &\text{in } \R^{d}\;(d=2,3), \label{eq:Helm}\\
\abs{\frac{\partial u}{\partial r} - \i k u} &= o(r^{\frac{1-d}2}) &\quad &\text{as } r=\abs{x} \to \infty, \label{eq:Somm}
\end{alignat}
where the scattered wave $u$ is a component of the electric field,  $\uinc$ denotes the incident wave, $k\gg1$ is the wave number, $\Omz\subset\Om$ is the region occupied by the Kerr medium, $\oneo$ is the characteristic function of $\Omz$, and
$\vep$ is called the Kerr constant satisfying $0<\vep\ll 1$, 
defined by $\vep=4n_2/n_0$ with $n_0$ and $n_2$ to be the linear and the second-order indices of refraction, respectively. Both $n_0$ and $n_2$ are assumed real so that the medium is transparent or lossless. \eqref{eq:Somm} is the Sommerfeld radiation condition, which ensures that the scattered wave 
is only outgoing. 
 In general, $f: \R^d\to \mathbb{C}$ is an $L^2$ function depending on the source and the incident wave, namely, $f=f_0 + \Delta \uinc+k^2\uinc$ with some source term $f_0$. Obviously, the total field $U:=u+\uinc$ satisfies the equation 
\begin{equation}\label{eq:Helm1}
-\Delta U - k^2 U - k^2\vep\oneo\abs{U}^2 U = f_0 \quad \mbox{in }\R^d. 
\end{equation}
We suppose that $f$ is compactly supported, that is, $\supp f\subset\Om$,  where $\Om = \B_R$ is a ball centered at the origin with radius $R$.  For simplicity, we assume that the wave number $k$ is constant in the whole space $\R^d$.
We write $\Gamma:=\partial\Om$, and often have 
$\dist(\Ga,\Omz)\geq C\diam(\Omz)$ for some constant $C$.

For numerical solutions, we should approximate the system \eqref{eq:Helm}--\eqref{eq:Somm} on a bounded domain.
The PML technique is an efficient and very popular mesh termination technique in computational wave propagation, which was originally proposed by B{\'e}renger \cite{berenger1994}. The key idea is to surround $\Om$ by a specially designed layer which can strongly absorb the outgoing waves entering the layer. Since the outgoing waves are strongly absorbed by PML,  
it is natural to truncate the scattered field by the simplest homogeneous Dirichlet boundary condition after an appropriate distance from the region $\Omega$,
say, at $r=\hR$ for some $\hR>R$, as the outgoing waves would be sufficiently small there. 
For the linear Helmholtz equation, existing studies (see, e.g., 
\cite{chen2005,chenwu2003,bramble2007,liwu2019,baowu2005,lassas1998}) indicate that the truncated PML solution converges exponentially when the width of the layer or the PML parameter tends to infinity.  
In particular, Li and Wu \cite{liwu2019} proved some wave-number-explicit stability and convergence estimates for the linear Helmholtz equation with truncated PML on the whole computational domain  $\D:=\B_{\hR}$ including the PML region between $\Gamma$ and $\hGamma:=\partial \D$  (see Figure~\ref{fig:region}).
\begin{figure}[tbp]
\centering
\begin{tikzpicture}
  \draw [red, thin] (0,2) circle [radius=1.6];
  \draw [blue, thin] (0,2) circle [radius=2.2];
  \draw [fill=cyan!20] (-0.75,2) to [out=90,in=180] (-0.25,2.5)
  to [out=0,in=180] (1,2.2) to [out=0,in=10] (0.5,1.5) 
  to [out=190 ,in=270] (-0.75,2);
  \node [scale=0.8]at (0,2) {$\Omega_0$};
  \node [scale=0.8]at (0,1) {$\Omega$};
  \node [scale=0.8]at (0,0.15) {PML};
  \node [scale=0.8]at (2.2,0.4) {$\D = \B_{\hR}$};
  \node [scale=0.8]at (1.75,2) {$\Gamma$};
  \node [scale=0.8]at (2.35,2.05) {$\hat{\Gamma}$};
\end{tikzpicture}
\caption{\itshape{Setting of the nonlinear PML problem.}}
\label{fig:region}
\end{figure}
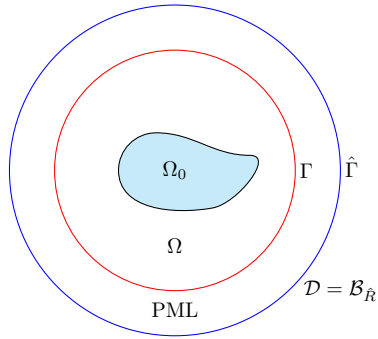

It is well-known that when solving the wave scattering problems in high frequency, the FEMs of fixed order may  suffer the so-called pollution effect, that is, its performance decreases as the wave number increases \cite{babuska2000}. It is of significance in the theory and practical applications of FEMs to derive an error estimate containing the pollution error, namely, the preasymptotic error estimate. For preasymptotic error estimates of FEMs for the linear Helmholtz equations we refer to \cite{ib95a,ib97,zhuwu2013,wu2013,duwu2015} for the impedance boundary condition and \cite{liwu2019,cgnt22} for the PML boundary condition. Melenk and Sauter \cite{melenk2010,ms11} showed that the $hp$-FEM is pollution free if its order is allowed to vary with the wave number $k$ (i.e. proportional to $\ln k$). 

Contrary to the aforementioned rich references for linear Helmholtz equations with high wave number, 
we are not aware of rigorous mathematical and finite element studies of the NLH system \eqref{eq:Helm}
in the literature. We were the first time to carry out in \cite{wuzou2018} 
a systematical mathematical and numerical study of the NLH system with impedance boundary condition.
The well-posedness of both the NLH system and its linear finite element approximation was established. Particularly, 
the stability estimates of the continuous NLH solutions and their finite element solutions were 
achieved with explicit dependence on the wave number, and the preasymptotic optimal error estimates of the finite element solutions were also derived. 

The purpose of this paper is to extend the results in our early work in \cite{wuzou2018} to 
the practically more important case, i.e., the NLH \eqref{eq:Helm} with PML boundary condition. 
Note that PML is a  much more accurate approximation to the radiation condition \eqref{eq:Somm} than the impedance boundary condition so that we can use smaller computational domain to truncate the unbounded domain and hence significantly save the computational cost. 
Our key idea is to introduce the Newton's sequences of approximate linearized problems to the continuous NLH problem with PML and its FEM, respectively, and then establish the convergence of the two sequences 
and the preasymptotic error estimates between them. It is noted that those estimates in \cite{wuzou2018} are based on the simplest iteration, that is, the frozen-nonlinearity method, while in this paper, we consider the Newton's method \cite{yuan2017} and give its corresponding estimates, in particular, its quadratic convergence. Specifically,
for the NLH with PML, we shall derive the wave-number-explicit stability and regularity estimates as well as the exponential convergence of its solution, under the condition that $\max\big\{k^{d-2}\vep\Mf^2,~k\vep\Linf{\uinc}^2 \big\}$ is sufficiently small and some other mild conditions on the PML parameters (see \eqref{eq:assumption}), where $\Mf= \norm{f}_{0,\Omega}+k^2\vep\norm{\uinc}_{L^6(\Om_0)}^3$. Furthermore, we establish the stability and preasymptotic error estimates 
when the linear FEM is used to approximate the NLH with PML, 
under the conditions that $k^3h^2$ and $\max\big\{ k^{d-2}\vep\abs{\ln h}^{2\bar{d}}\Mf^2,~k\vep\Linf{\uinc}^2\big\}$ are sufficiently small and the same conditions \eqref{eq:assumption} on the PML parameters, 
where $\bar d=0$ for $d=2$ and $\bar d=1$ for $d=3$.  The fact that $\bar d=0$ for $d=2$ indicates 
the condition on $\vep$ for the FEM do not contain a logarithmic factor in $h$ in two dimensions, 
which is the same as that for the original NLH and improves the condition 
in our previous work \cite{wuzou2018} 
with impedance boundary condition. Moreover, we present numerical examples to verify the accuracy of the FEM, most importantly, to demonstrate that the pollution error may be greatly reduced by applying the continuous interior penalty finite element method (CIP-FEM) 
\cite{douglas1976,zhuwu2013,wu2013,duwu2015,liwu2019} 
and selecting proper penalty parameters, as well as to successfully simulate the nonlinear optical phenomenon of optical bistability (see \cite{boyd2008}) by using the CIP-FEM solved by the Newton's method.

The rest of this paper is organized as follows. In Section~\ref{s:2}, we introduce the nonlinear truncated PML problem for the NLH and three iterative methods for solving the PML system.  
Section~\ref{s:3} is devoted to 
the stability estimates and the exponential convergence of the approximate solution to the nonlinear truncated PML problem. The quadratic convergence of the Newton's iteration for the nonlinear truncated PML problem is also achieved. 
In Section~\ref{s:4}, we establish the preasymptotic error estimates of the FEM for the nonlinear truncated PML problem and the quadratic convergence of the Newton's iteration for the nonlinear FEM, and further introduce the CIP-FEM 
to reduce the pollution error.
In Section~\ref{s:5}, some numerical examples are provided to verify the accuracies of the FEM and CIP-FEM, especially to recover the phenomenon of optical bistability. 
 
Throughout the paper, $C$ is used to denote a generic positive constant that is independent of $h,\,k,\,f$, and the penalty parameters, but may depend on the PML absorbing parameter $\sigma_0$ and thickness $L$ at most polynomially. We also use the shorthand notations $A\ls B$ and $B\gtrsim A$ for the inequality $A\leq CB$. $A\eqsim B$ is a notation for the statement that $A\ls B$ and $A\gtrsim B$. In addition, some standard Sobolev spaces, norms and inner products associated with Helmholtz equations 
are adopted, as in \cite{brenner2008,ciarlet1978}. In particular, $(\cdot,\cdot)_Q$ and $\ine{\cdot}{\cdot}$ denote the $L^2$-inner product on complex-valued $L^2(Q)$ and $L^2(e)$ spaces, respectively. 
For simplicity, we will write by $\norm{\cdot}_{s,G}$ and $\abs{\cdot}_{s,G}$ the norm and semi-norm of the Sobolev space $H^s(G)$ for any domain $G\subset \R^d$, and 
write by $\chi_G$ the characteristic function of $G$.

\section{The approximate PML problem}\label{s:2}
In this section we approximate the NLH \eqref{eq:Helm}--\eqref{eq:Somm} by the PML technique 
and state three iterative methods for the derived nonlinear PML problem.

\subsection{The nonlinear approximate PML problem}
It is well known that the PML system can be viewed as a consequence of the original scattering problem 
by a complex coordinate stretching (see e.g. \cite{chew1997,collino1998}). 
For simplicity, we consider the circular/spherical PML with constant absorbing coefficient. Let
\begin{align}\label{tr}
\tr:=\intr\al(s) ds=r\be(r),  \quad \text{with } \al(r)=1+\i\si (r), ~\be(r)=1+\i \de(r),
\end{align}
where $\si(r)$ and $\de(r)$ are given by 
\begin{equation} \label{def:medium}
\si (r)=\left\{
\begin{aligned}
& 0, & 0\leq r \leq R, \\
& \siz,  & r>R,
\end{aligned}
\right. \qquad \de(r)=\left\{
\begin{aligned}
& 0, & 0\leq r \leq R, \\
& \frac{\siz(r-R)}{r},  & r>R, 
\end{aligned}
\right.
\end{equation}
with $\siz>0$ being a constant. We assume that the PML medium property $\si$ is constant here to  
simplify the theoretical analysis, even though it is possible to employ the variable PML medium properties 
in practice, e.g., the PML parameter can be chosen as $\si (r)=\sigma_0 (\hR-R)^{-m}(r-R)^m$ with $m\geq 1$ when $R < r \leq \hR$. However, the theoretical analysis of variable PML medium properties 
will be much more technical and not be considered in this work. 
The PML equation is obtained from the Helmholtz equation \eqref{eq:Helm} by replacing the radial coordinate $r$ by $\tr$. For example, in the case of two dimensions $(d=2)$, the Helmholtz equation \eqref{eq:Helm} can be written in polar coordinates  as follows: 
\begin{equation}\label{eq:Helm:polar}
-\frac{1}{r}\frac{\partial}{\partial r} \left(r\frac{\partial u}{\partial r}\right) - \frac{1}{r^2}\frac{\partial^2 u}{\partial \theta^2}-k^2u - k^2\vep\oneo\abs{u+\uinc}^2 (u+\uinc) = f.		
\end{equation}
Then the PML equation is given by 
\[ -\frac{1}{\tr}\frac{\partial}{\partial \tr} \left(\tr\frac{\partial \tu}{\partial \tr}\right) - \frac{1}{\tr^2}\frac{\partial^2 \tu}{\partial \theta^2}-k^2\tu - k^2\vep\oneo\abs{\tu+\uinc}^2 (\tu+\uinc) = f,
\]
where $\tu(r,\theta):=u(\tr,\theta)$. Noting that ${\partial}/{\partial\tr}=\al^{-1} {\partial}/{\partial r}$ and $\tr=\be r$, we get
\begin{equation*}
-\frac{1}{r}\frac{\partial}{\partial r} \left(\frac{\be r}{\al}\frac{\partial \tu}{\partial r}\right) - \frac{\al}{\be r^2}\frac{\partial^2 \tu}{\partial \theta^2}-\al\be k^2\tu - k^2\vep \oneo\abs{\tu+\uinc}^2 (\tu+\uinc) = f.
\end{equation*}
We note that $\tu = u$ in $\Om$ and is expected to decay exponentially away from $\Gamma$. Therefore the PML problem is truncated at $r=\hR$, 
where $\tu$ is sufficiently small. Let $\hOm = \{ x\in \R^d:\abs{x}\in (R,\hR) \}$ and $L:=\hR-R$ denote the PML domain and its thickness, respectively. Recalling the notation $\D = \B_{\hR}$ and $\hat\Gamma = \partial \D$, we arrive at the following nonlinear truncated PML problem:
\begin{equation} \label{PML:2d}
-\frac{1}{r}\frac{\partial}{\partial r} \left(\frac{\be r}{\al}\frac{\partial \hu}{\partial r}\right) - \frac{\al}{\be r^2}\frac{\partial^2 \hu}{\partial \theta^2}-\al\be k^2\hu - k^2\vep\oneo\abs{\hu+\uinc}^2 (\hu+\uinc) = f \quad  \mbox{in } \D;\quad\, \hu =0 \quad \mbox{on } \hat{\Gamma}. 
\end{equation}
The nonlinear PML problem for three dimensional case in spherical coordinates 
can be derived in a similar way and written as (see \cite{liwu2019} for details):
\begin{equation} \label{PML:3d}
-\frac{1}{r^2} \frac{\partial}{\partial r} \left(\frac{\be^2 r^2}{\al}\frac{\partial \hu}{\partial r}\right) - \frac{\al}{r^2}\Delta_S \hu  - \al\be^2 k^2\hu - k^2\vep\oneo\abs{\hu+\uinc}^2 (\hu+\uinc) = f \quad \mbox{in  } \D; \quad\, \hu = 0 \quad  \mbox{on } \hat{\Gamma},
\end{equation}
where $\Delta_S = \frac{1}{\sin\theta}\frac{\partial}{\partial\theta} \left(\sin\theta\frac{\partial}{\partial\theta}\right) + \frac{1}{\sin^2\theta}\frac{\partial^2}{\partial\vp ^2}$ is the Laplace-Beltrami operator on the unit sphere.

In Cartesian coordinates, we denote by $\hat L$ the linear differential operator:
\begin{equation*}
\hat L w:=-\nabla\cdot (A\nabla w) - Bk^2 w,
\end{equation*}
where $A$ and $B$ are defined as 
$ 
A=HDH^T$,  $B=\al(r)\be^{d-1}(r),
$ 
with the matrices $D$ and $H$ given by 
\[
\begin{array}{cc}
D=\begin{pmatrix}
	\frac{\be(r)}{\al(r)} & 0 \\
	0 & \frac{\al(r)}{\be(r)}	
  \end{pmatrix},~
H=\begin{pmatrix}
	\cos\theta & -\sin\theta \\
	\sin\theta & \cos\theta
  \end{pmatrix} & \text{for } d=2,\\
D=\begin{pmatrix}
	\frac{\be^2(r)}{\al(r)} & 0      & 0    \\
	0                       & \al(r) & 0    \\
	0                       & 0      & \al(r)
  \end{pmatrix},~
H=\begin{pmatrix}
	\sin\theta\cos\vp & \cos\theta\cos\vp & -\sin\vp \\
	\sin\theta\sin\vp & \cos\theta\sin\vp & \cos\vp \\
	\cos\theta            & -\sin\theta           & 0 
  \end{pmatrix} & \text{for } d=3\,.
\end{array}
\]
Then the nonlinear PML problems \eqref{PML:2d} and \eqref{PML:3d} can be rewritten in the unified form in $\R^d$:  
\begin{equation}\label{eq:PML}
\hat L \hu - k^2 \vep \oneo \abs{\hu+\uinc}^2(\hu+\uinc) = f\quad\mbox{in } \D;\quad\, \hu=0\quad\mbox{on } \hat{\Gamma}. 
\end{equation}
For simplicity, throughout the rest of the paper, we shall use the notations:
\[\norm{\cdot}_s = \norm{\cdot}_{s,\D} = \norm{\cdot}_{H^s(\D)}, \quad \abs{\cdot}_s = \abs{\cdot}_{s,\D} = \abs{\cdot}_{H^s(\D)} \qaq (\cdot,\cdot)=(\cdot,\cdot)_{\D}.
\] 
Since $A(x)$ is discontinuous across $\Gamma = \partial\Omega$, $\hu$ may be not in the space $H^2(D)$. Note that 
\[\hu\in H^2(\OchO):=\{v\in L^2(\D) : v|_\Om\in H^2(\Om),\; v|_{\hat\Om}\in H^2(\hat\Om)\},\]
we define the corresponding norm and semi-norm by
\[\Ht{\cdot}{\OchO}=\big(\Ht{\cdot}{\Om}^2+\Ht{\cdot}{\hOm}^2\big)^{1/2},\quad \sHt{\cdot}{\OchO}=\big(\sHt{\cdot}{\Om}^2+\sHt{\cdot}{\hOm}^2\big)^{1/2}.\]

The variational formulation of the nonlinear PML problem \eqref{eq:PML} 
reads as: find $\hu\in H_0^1(\D)$ such that
\begin{equation}\label{varnltpml}
 \anl(\hu,v) = (f,v) \quad\forall v\in H_0^1(\D),
\end{equation}
where $\anl(u,v)$ is defined by 
\begin{align}
\anl(u,v) &:= a(u,v)-k^2\vep\big(\abs{u+\uinc}^2(u+\uinc),v\big)_{\Omz},\label{a1} \\
a(u,v) &:= (A\na u,\na v)-k^2(Bu,v). \label{a2}
\end{align}

We shall often use the following energy norm in the subsequent analysis:
\begin{equation} \label{He}
\He{v}=\Big(\Re \big(a(v,v)\big)+2k^2\LtD{v}^2\Big)^{1/2} \quad \forall\,v\in H^1(\D)\,.
\end{equation}
It can be shown that $\He{v}\eqsim k\LtD{v}+\LtD{\na v}$. In fact, noting that $0\leq \de \leq \si \leq \siz$, 
we have for the case of $d=2$ that 
\begin{align*}
\He{v}^2 &= \Re \big(a(v,v)\big)+2k^2\LtD{v}^2 \\
 &= \int_0^{2\pi} {\hskip -7pt} \inthR \bigg( \frac{1+\si\de}{1+\si^2} r\abs{v_r}^2 +\frac{1+\si\de}{1+\de^2} \frac 1r \abs{v_\theta}^2 +(1+\si\de)k^2 r\abs{v}^2 \bigg)dr d\theta, 
\end{align*}
which leads to  
\begin{equation} \label{h12d} 
(1+\siz^2)^{-1} \LtD{\na v}^2 +k^2\LtD{v}^2 \leq \He{v}^2 \leq (1+\siz^2) (\LtD{\na v}^2 + k^2\LtD{v}^2).
\end{equation}
Similarly, we obtain for the case of $d=3$ that 
\begin{align*}
&\He{v}^2 = \Re \big(a(v,v)\big)+2k^2\LtD{v}^2 \\
& = \int_0^{2\pi}{\hskip -7pt} \int_0^\pi {\hskip -5pt} \inthR \sin\theta \bigg( \frac{1-\de^2+2\si\de}{1+\si^2} r^2\abs{v_r}^2 + \abs{v_\theta}^2 + \frac{1}{\sin^2\theta}\abs{v_\varphi}^2 +(1+\de^2+2\si\de)k^2 r^2\abs{v}^2 \bigg) dr d\vp d\theta ,
\end{align*}
which leads to
\begin{equation} \label{h13d} 
(1+\siz^2)^{-1} \LtD{\na v}^2 +k^2\LtD{v}^2 \leq \He{v}^2 \leq (1+3\siz^2) (\LtD{\na v}^2 + k^2\LtD{v}^2).
\end{equation}

\subsection{Iterative methods for the nonlinear PML problem}
To solve the nonlinear PML problem \eqref{eq:PML}, we introduce three iterative methods. 
The first and simplest one is the frozen-nonlinearity iteration: 

Given initial function $\hat u_0\in H_0^1(\D)$, find $\hu^{l+1}\in H_0^1(\D)$ for $l=0,1,2,\cdots$, such that
\begin{equation}\label{eq:fp}
\hat L \hu^{l+1} - k^2 \vep \oneo \abs{\hu^l + \uinc}^2 (\hu^{l+1}+\uinc) = f.
\end{equation}
It is known that the iteration \eqref{eq:fp} only has the linear convergence rate and converges for problems with weak nonlinearity (see \cite{yuan2017,wuzou2018}). The analysis of the nonlinear PML problem \eqref{eq:PML} based on the iteration \eqref{eq:fp} is similar to that of the NLH \eqref{eq:Helm1} with impedance boundary condition in \cite{wuzou2018} and is omitted here.

The second one is the Newton's method: 

Given $\hu^0\in H_0^1(\D)$, find $\hu^{l+1}\in H_0^1(\D)$ for $l=0,1,2,\cdots$, such that 
\begin{equation}\label{ip1}
\begin{aligned}
\hat L \hu^{l+1} &- k^2 \vep \oneo \BgS{2\abs{\hu^{l}+\uinc}^2 \hu^{l+1} + \big(\hu^{l}+\uinc\big)^2 \overline{\hu^{l+1}}} \\
= f &- k^2 \vep \oneo \BgS{2\abs{\hu^{l} + \uinc}^2 \hu^{l}-\big(\hu^{l}+\uinc\big)^2\overline{\uinc}}.
\end{aligned}
\end{equation}
The Newton's method  converges not only for problems with weak nonlinearity but also for problems with strong nonlinearity and converges at a quadratic rate once the initial function $\hat u^0$ is sufficiently close to the exact solution. The Newton's method will be analyzed in the next section.
 
The third one is a modified Newton’s method proposed by \cite{yuan2017}. It is obtained by replacing $\overline{\hu^{l+1}}$ in \eqref{ip1} by $\overline{\hu^{l}}$ and then given by 
\begin{equation}\label{eq:mN}
\begin{aligned}
\hat L \hu^{l+1} - 2k^2 \vep \oneo \abs{\hu^{l}+\uinc}^2 \hu^{l+1} = f - k^2 \vep \oneo \abs{\hu^{l}+\uinc}^2 \Sp{\hu^{l} - \uinc}, \quad l\geq 0.
\end{aligned}
\end{equation}
Compared with the Newton's method, the modified Newton's method has only linear convergence rate but numerical evidences indicate that it is robust with respect to the initial guess. The analysis of this method is left to a future work.   

\section{Analyses of the nonlinear PML problem}\label{s:3}
In this section, we shall present the well-posedness of the nonlinear PML problem \eqref{eq:PML} and 
prove the exponential convergence of the nonlinear PML solution to the original NLH solution. To do so, we regard the nonlinear PML solution as the limits of the sequence constructed by Newton's iteration \eqref{ip1}. 
We shall first derive some uniform bounds for the linearized problems and then prove the quadratic convergence of the iteration sequence.

\subsection{An auxiliary linearized problem}\label{au-problem}
Before analyzing the nonlinear PML problem \eqref{eq:PML}, we study a 
linearized problem associated with the Newton's iteration \eqref{ip1}: 
for given function $\phi\in L^{\infty}(\D)$ and $g\in L^2(\D)$, $\hat w^\phi \in H^1_0(\D)$ solves
\begin{equation}\label{ap1}
\hat L \hat w^\phi-k^2\vep \oneo \bgS{2\abs{\phi+\uinc}^2\hat w^\phi + (\phi + \uinc)^2 \overline{\hat w^\phi}} = g. 
\end{equation}

For the stability of the solution to this auxiliary linear system, we first recall some estimates 
of the solution to a linear Helmholtz problem and the Hankel functions of the first kind.
%
\begin{lemma}[{\cite[Theorem 3.1 and Corollaries 3.4 and 3.9]{liwu2019}}]\label{lem:stabl}
For a given source $\hat g\in L^2(\D)$, let $\hat w\in H_0^1(\D)$ solve $\hat L \hat w = \hat g$, 
then under the conditions that $R\eqsim\hR\eqsim 1$ and  
\begin{equation}\label{eq:assumption}
kR\geq 1 \qaq k\siz L\geq\max\big\{ 2kR+\sqrt{3}kL,10 \big\}, 
\end{equation}
there exists a positive constant $C_{\mL}$ independent of $k$ and $\hat g$ such that 
\begin{equation}\label{stab:LTPML} 
k\LtD{\hat w}+\He{\hat w}+k^{-1}\sHt{\hat w}{\OchO} \leq C_{\mL} \LtD{\hat g}.
\end{equation}
\end{lemma}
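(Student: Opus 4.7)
The plan is to prove \eqref{stab:LTPML} by a separation-of-variables argument combined with explicit Hankel function representations along a complex-stretched radial contour, which is the standard route for circular/spherical PML with constant absorbing coefficient. First I would expand $\hat w$ and $\hat g$ in angular modes: in 2D write $\hat w(r,\theta)=\sum_{n\in\Z}w_n(r)e^{\i n\theta}$, and use spherical harmonics in 3D. Since $\al$ and $\be$ depend only on $r$, the operator $\hat L$ separates variables, so each radial coefficient $w_n$ satisfies a one-dimensional Sturm--Liouville problem on $(0,\hR)$ with the natural regularity condition at $r=0$ and a homogeneous Dirichlet condition at $r=\hR$.

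Second, I would construct the Green's function of each modal problem explicitly. On $(0,R)$, where $\al\equiv\be\equiv 1$, the fundamental solutions are $\Jn(kr)$ (regular at the origin) and $\Hn(kr)$ (outgoing); on $(R,\hR)$ they are the analytically continued $\Jn(k\tr)$ and $\Hn(k\tr)$, since $\tr=r\be(r)$ is a holomorphic coordinate stretching. The Green's function is assembled by matching values and first derivatives at $r=R$ and imposing the Dirichlet condition at $r=\hR$, yielding an integral representation of $w_n$ in terms of $g_n$. The stability then reduces to a mode-wise bound $k\LtD{w_n}\le C\LtD{g_n}$ with $C$ uniform in $n$.

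Third, the crucial technical step is to bound the Hankel functions along the contour $\{k\tr(r):r\in[R,\hR]\}$, which leaves the real axis and enters the upper half complex plane. The hypothesis $k\siz L\ge\max\{2kR+\sqrt{3}kL,\,10\}$ is exactly what is needed so that $\Hn(k\tr(\hR))$ is exponentially small compared with $\Hn(kR)$, uniformly in the order $n$; concretely, one uses Debye/WKB-type uniform asymptotics for $\Hn$ along rays in the upper half plane together with the decay of $|\Hn(z)|$ as $\Im z$ grows. This is precisely the content of the Hankel function estimates in Corollaries~3.4 and~3.9 of \cite{liwu2019}. Summing the mode-wise bounds via Parseval's identity yields $k\LtD{\hat w}\ls\LtD{\hat g}$.

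Finally, testing $\hat L\hat w=\hat g$ against $\hat w$ and combining the real and imaginary parts with \eqref{h12d}--\eqref{h13d} upgrades the $L^2$ bound to the energy-norm bound $\He{\hat w}\ls\LtD{\hat g}$, while standard elliptic regularity applied separately on $\Om$ and on $\hOm$ (needed because $A$ jumps across $\Ga$, which precludes global $H^2$ regularity) together with the equation gives $k^{-1}\sHt{\hat w}{\OchO}\ls\LtD{\hat g}$. The main obstacle is the uniform-in-$n$ Hankel estimate along the stretched contour, with sharp dependence on $\siz$, $L$ and $k$; once those bounds are available, the Green's function construction, the Parseval step, and the elliptic regularity upgrade are essentially bookkeeping.
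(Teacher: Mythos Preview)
The paper does not prove this lemma at all; it simply quotes the result from \cite[Theorem~3.1 and Corollaries~3.4 and~3.9]{liwu2019} and moves on. So there is no in-paper proof to compare against.

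That said, your outline is an accurate high-level reconstruction of the argument in \cite{liwu2019}: angular separation of variables, explicit modal Green's functions built from $\Jn$ and $\Hn$ along the complex-stretched radial coordinate $\tr=r\be(r)$, uniform-in-order Hankel bounds on the stretched contour (this is exactly where the hypothesis \eqref{eq:assumption} enters and what the cited Corollaries~3.4 and~3.9 supply), Parseval to sum the modes for the $L^2$ estimate, and then the energy and piecewise-$H^2$ upgrades. The one place where your sketch is slightly loose is the passage from $k\LtD{\hat w}\ls\LtD{\hat g}$ to $\He{\hat w}\ls\LtD{\hat g}$: testing $\hat L\hat w=\hat g$ against $\hat w$ alone does not immediately give this because the sign-indefinite term $-k^2(B\hat w,\hat w)$ must be absorbed, which requires already having the $k\LtD{\hat w}$ bound in hand; you have that bound, so the step goes through, but it is worth stating the order of operations explicitly. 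Otherwise your plan matches the cited proof.
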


\begin{lemma}[{\cite[Lemma 2.2]{chen2005}}]\label{lem:Bessel}
For any $\nu\in\R$, $z\in\Cm_{++}=\left\{z\in \Cm:\Im(z)\geq 0, \Re(z)\geq 0\right\}$ and $0<x\leq\abs{z}$, 
the following estimate holds for the Hankel function $H_{\nu}^{(1)}(z)$ of the first kind:
\begin{equation}\label{eq:Hankel:bound}
\abs{H_{\nu}^{(1)}(z)} \leq e^{-\Im(z)\left(1-\frac{x^2}{\abs{z}^2}\right)^{1/2}}\abs{H_{\nu}^{(1)}(x)}.
\end{equation}
\end{lemma}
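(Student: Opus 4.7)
The lemma is a classical comparison estimate for $H_\nu^{(1)}$ on the closed first quadrant $\Cm_{++}$, taken from \cite{chen2005}; the plan is to reproduce its proof by combining an integral representation of the Hankel function with a contour-deformation (saddle-point) argument. Writing $z=re^{\i\theta}$ with $r=|z|\geq x$ and $\theta\in[0,\pi/2]$, the target factor reads $e^{-r\sin\theta\sqrt{1-x^2/r^2}}$, and the goal is to extract precisely this much exponential decay from the integral representation.

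I would start from the Sommerfeld-type representation
\[
H_\nu^{(1)}(z)=\frac{2}{\i\pi}e^{-\i\nu\pi/2}\int_0^\infty e^{\i z\cosh t}\cosh(\nu t)\,dt,
\]
valid for $\Re z>0$ and $|\Re\nu|<1$, extending to general real $\nu$ by the Schl\"afli contour or analytic continuation in $\nu$. The same formula with $z$ replaced by real $x$ produces $H_\nu^{(1)}(x)$, so the two objects we compare are integrals of the common positive kernel $(2/\pi)\cosh(\nu t)$ against $e^{\i z\cosh t}$ and $e^{\i x\cosh t}$, respectively. Since for $z\in\Cm_{++}$ one has $|e^{\i z\cosh t}|=e^{-\Im(z)\cosh t}$, the idea is to deform the $t$-contour so that the minimum of $|e^{\i z\cosh t}|$ along the deformed path equals the sought factor $e^{-\Im(z)\sqrt{1-x^2/r^2}}$; this occurs at the saddle associated with the relation $\Re(\cosh t_\ast)=x/r\leq 1$.

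Once the deformation is in place, I would factor out the saddle value and bound the remainder by the corresponding integral representing $|H_\nu^{(1)}(x)|$. A clean way to close this step is to invoke Nicholson's formula
\[
|H_\nu^{(1)}(x)|^2=\frac{8}{\pi^2}\int_0^\infty K_0(2x\sinh t)\cosh(2\nu t)\,dt,
\]
which expresses $|H_\nu^{(1)}(x)|$ directly as the square root of a positive integral whose structure matches the post-deformation remainder, so that the residual integrand is dominated termwise. The main technical obstacle is the contour deformation itself: one must verify that no branch points or singularities of the integrand are crossed, that decay at infinity is preserved, and that the saddle-point path really attains the claimed minimum $e^{-\Im(z)\sqrt{1-x^2/r^2}}$ rather than the weaker bound $e^{-\Im(z)}$. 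Once this geometric step is justified, the inequality follows by a direct triangle-inequality estimate on the deformed integral.
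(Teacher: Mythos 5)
The paper does not prove this lemma at all: it is quoted verbatim from Chen and Liu \cite{chen2005} (their Lemma~2.2), so the only proof to compare yours against is the one in that reference. That proof is structurally different from what you propose: it works directly with a Nicholson/Macdonald-type \emph{positive} integral representation of the squared modulus $\abs{H_\nu^{(1)}(z)}^2$ itself, valid for $z$ in the closed first quadrant and all real $\nu$, and obtains \eqref{eq:Hankel:bound} by a pointwise comparison of integrands; there is no contour deformation and no square root of an integral to reconcile.

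Your sketch, by contrast, has gaps I do not see how to close. First, the contour deformation --- which you yourself flag as the main obstacle --- is never specified, and the heuristic offered for it is inconsistent: the point determined by $\Re(\cosh t_\ast)=x/r$ is not a saddle of the phase $z\cosh t$ (the saddles sit where $\sinh t=0$), and since $\abs{e^{\i z\cosh t}}=e^{-\Im(z)\Re(\cosh t)-\Re(z)\Im(\cosh t)}$, the relation $\Re(\cosh t_\ast)=x/r$ does not by itself produce the claimed factor; the simplest choice $\cosh t_\ast=x/r$ gives $e^{-\Im(z)x/r}$ rather than $e^{-\Im(z)(1-x^2/r^2)^{1/2}}$. (Also, for an upper bound you must control the \emph{maximum} of $\abs{e^{\i z\cosh t}}$ on the deformed path, not its minimum.) Second, the closing step cannot work as stated: Nicholson's formula represents $\abs{H_\nu^{(1)}(x)}^2$, and an integral cannot be dominated ``termwise'' by the square root of another integral. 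Third, the representation you start from is only conditionally convergent on the real axis and requires $\abs{\Re\nu}<1$ there; you cannot remove this by ``analytic continuation in $\nu$,'' because inequalities do not continue analytically, so the argument would have to be redone for each real $\nu$ via the Schl\"afli contour --- and the uniformity in $\nu$, which is the genuinely delicate point of the lemma (both sides grow like $(2/\cdot)^{\nu}\Gamma(\nu)$ as $\nu\to\infty$), is nowhere addressed. The natural repair is to abandon the amplitude representation altogether and argue, as the cited source does, from a single positive integral formula for $\abs{H_\nu^{(1)}(z)}^2$.
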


Using Lemma \ref{lem:stabl}, we can readily get the stability estimate of the solutions to the auxiliary linear system \eqref{ap1}. To do so, we rewrite it as 
\begin{equation}\label{eL1}
\hat L \hat w^\phi = g + k^2\vep \oneo \bgS{2\abs{\phi+\uinc}^2\hat w^\phi + (\phi + \uinc)^2 \overline{\hat w^\phi}}.
\end{equation}
Applying \eqref{stab:LTPML} to \eqref{eL1}, 
we obtain 
\begin{align*} 
k\LtD{\hat w^\phi}+\He{\hat w^\phi}+k^{-1}\sHt{\hat w^\phi}{\OchO} &\leq C_{\mL}\LtD{g} + 3 C_{\mL}k^2 \vep\Linf{\phi+\uinc}^2\LtD{\hat w^\phi} 
,
\end{align*}
which leads to the following estimate (by taking  $\theta_0= {1}/{(24 C_\mL)}$ there). 

\begin{lemma}\label{lem:stabap}
Let the conditions of Lemma \ref{lem:stabl} be satisfied. 
Then there exists a positive constant $\theta_0\ls 1$ such that the solution $\hat w^\phi$ to \eqref{ap1}  
satisfies
\begin{equation}
\label{eq:stabap}
k\LtD{\hat w^\phi}+\He{\hat w^\phi}+k^{-1}\sHt{\hat w^\phi}{\OchO} \leq 2C_\mL\LtD{g},
\end{equation}
under the condition that 
$\max\{k\vep\Linf{\phi}^2,\, k\vep\Linf{\uinc}^2\}\leq\theta_0.$
%
\end{lemma}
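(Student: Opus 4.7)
The plan is to view the auxiliary equation \eqref{ap1} as the linear truncated PML problem $\hat L \hat w^\phi = \hat g$ with a solution-dependent source, invoke the linear stability estimate of Lemma~\ref{lem:stabl}, and then absorb the solution-dependent contribution back into the left-hand side by exploiting the smallness of $k\vep\max\{\Linf{\phi}^2,\Linf{\uinc}^2\}$. The rewriting \eqref{eL1} already isolates this structure, so the argument reduces to a short bookkeeping calculation followed by an absorption step.

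First I would set $\hat g := g + k^2\vep\oneo\bgS{2\abs{\phi+\uinc}^2\hat w^\phi + (\phi+\uinc)^2 \overline{\hat w^\phi}}$ and apply Lemma~\ref{lem:stabl} to obtain
\[
k\LtD{\hat w^\phi} + \He{\hat w^\phi} + k^{-1}\sHt{\hat w^\phi}{\OchO} \leq C_\mL \LtD{\hat g}.
\]
The triangle inequality together with pointwise multiplier bounds yields
\[
\LtD{\hat g} \leq \LtD{g} + 3k^2\vep\Linf{\phi+\uinc}^2\LtD{\hat w^\phi},
\]
after which the elementary inequality $\Linf{\phi+\uinc}^2 \leq 4\max\{\Linf{\phi}^2,\Linf{\uinc}^2\}$ puts the right-hand side in the form controlled by the hypothesis. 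Combining the two displays gives
\[
k\LtD{\hat w^\phi} + \He{\hat w^\phi} + k^{-1}\sHt{\hat w^\phi}{\OchO} \leq C_\mL\LtD{g} + 12\,C_\mL\, k\vep\max\{\Linf{\phi}^2,\Linf{\uinc}^2\}\cdot k\LtD{\hat w^\phi}.
\]

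The decisive step is the calibration of $\theta_0$. Selecting $\theta_0 := 1/(24 C_\mL)$ forces the prefactor of $k\LtD{\hat w^\phi}$ on the right to be at most $1/2$, and since $k\LtD{\hat w^\phi}$ is itself part of the left-hand side, a standard absorption delivers the claimed bound with the factor $2C_\mL$. I do not anticipate any genuine obstacle: $C_\mL$ is independent of $k$ by Lemma~\ref{lem:stabl}, so $\theta_0$ is an absolute constant; furthermore the absorption needs only to dominate the $L^2$ piece of the left-hand side, which is precisely the reason the hypothesis is phrased with the scaling $k\vep\Linf{\cdot}^2$ rather than $k^2\vep\Linf{\cdot}^2$. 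The only minor care point is the factor $3$ coming from the two nonlinear-type terms $2\abs{\phi+\uinc}^2\hat w^\phi$ and $(\phi+\uinc)^2\overline{\hat w^\phi}$, which is harmlessly absorbed into the choice of $\theta_0$.
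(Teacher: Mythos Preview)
Your proposal is correct and follows essentially the same route as the paper: the paper likewise rewrites \eqref{ap1} as \eqref{eL1}, applies the linear PML stability \eqref{stab:LTPML}, bounds the extra term by $3C_\mL k^2\vep\Linf{\phi+\uinc}^2\LtD{\hat w^\phi}$, and then sets $\theta_0=1/(24C_\mL)$ to absorb it. Your bookkeeping (the factor $3$ from the two nonlinear pieces and the factor $4$ from $\Linf{\phi+\uinc}^2\le 4\max\{\Linf{\phi}^2,\Linf{\uinc}^2\}$) matches the paper's choice of $\theta_0$ exactly.
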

The uniqueness of the solutions to the auxiliary linearized problem \eqref{ap1} follows directly from Lemma \ref{lem:stabap}. Furthermore, the existence of a solution can be obtained by the uniqueness and the Fredholm alternative theorem. In fact, by writing $\hat w^\phi= \hat w^\phi_r + \i \hat w^\phi_i$, where $\hat w^\phi_r$ and $\hat w^\phi_i$ are both real-valued functions, we can get an equivalent variational problem with a bilinear form defined on real-valued Sobolev spaces.
Then it can be proved that the equivalent bilinear form is continuous and satisfies the G{\aa}rding's inequality, 
so an application of the Fredholm alternative theorem will lead to the existence of a solution to the variational problem 
following from the uniqueness; see, e.g., \cite[Theorem 2.34]{Mclean2000} or \cite[\S 6.2]{evans2010}. 
Therefore, the auxiliary linearized problem \eqref{ap1} is well-posed.

Moreover, we have the following $L^\infty$-estimate in $\Om_0$ for the solution 
$\hat w^\phi$ to \eqref{ap1}, which will play a crucial role in our subsequent analysis. 
\begin{lemma}\label{lem:linfap}
Let the conditions of Lemma \ref{lem:stabap} be satisfied, then 
the solution $\hat w^\phi$ to \eqref{ap1} satisfies 
\begin{equation}\label{eq:Linfap}
\Linf{\hat w^\phi}\ls k^{\frac{d-3}{2}}\LtD{g}.
\end{equation}
\end{lemma}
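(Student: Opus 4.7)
The plan is to recast \eqref{ap1} as a linear truncated PML equation $\hat L\hat w^\phi=\tilde g$ with
\[
\tilde g := g + k^2\vep\oneo\bgS{2|\phi+\uinc|^2\hat w^\phi+(\phi+\uinc)^2\overline{\hat w^\phi}},
\]
and then derive the $L^\infty$-in-$\Omega_0$ bound from a linear Green's-function $L^\infty$–$L^2$ estimate combined with the already-established $L^2$ stability \eqref{eq:stabap}. Specifically, after proving that any solution of $\hat L\hat w=\hat g$ with $\hat w|_{\hat\Gamma}=0$ satisfies $\Linf{\hat w}\ls k^{(d-3)/2}\LtD{\hat g}$, we apply this to $\tilde g$ and bound
$\LtD{\tilde g}\le \LtD{g}+3k^2\vep\Linf{\phi+\uinc}^2\LtD{\hat w^\phi}\ls\LtD{g}(1+k\vep\Linf{\phi+\uinc}^2)\ls\LtD{g}$,
where the final inequality uses the smallness hypothesis of Lemma~\ref{lem:stabap} and the $L^2$ bound \eqref{eq:stabap}.

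The linear $L^\infty$–$L^2$ estimate will be obtained via a Green's-function representation. For each $x_0\in\Omz$, let $G(x_0,\cdot)\in H_0^1(\D)$ solve $\hat L G(x_0,\cdot)=\delta_{x_0}$ with homogeneous Dirichlet condition on $\hat\Gamma$; this problem is well-posed because $\hat L$ is symmetric in the transpose (no complex conjugate) bilinear sense and Lemma~\ref{lem:stabl} supplies the a priori bound needed for the Fredholm alternative (as in Section~\ref{au-problem}). Then $\hat w(x_0)=\int_{\D}G(x_0,y)\hat g(y)\,dy$, so the Cauchy–Schwarz inequality reduces the task to
\[
\|G(x_0,\cdot)\|_{L^2(\D)}\ls k^{(d-3)/2}\qquad\text{uniformly in }x_0\in\overline{\Omz}.
\]

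To prove this key bound, fix a smooth cutoff $\chi\in C_c^\infty(\Om)$ with $\chi\equiv 1$ on a neighborhood of $\overline{\Omz}$, and split $G(x_0,\cdot)=\chi\,\Phi_k(x_0,\cdot)+R$, where $\Phi_k(x_0,y)$ is the free-space outgoing fundamental solution of $-\Delta-k^2$ (namely $\frac{\i}{4}H_0^{(1)}(k|y-x_0|)$ for $d=2$ and $\frac{e^{\i k|y-x_0|}}{4\pi|y-x_0|}$ for $d=3$). Since $\chi\equiv 1$ near $x_0$ and $\hat L=-\Delta-k^2$ throughout $\Om$, the Dirac source of $\chi\Phi_k$ exactly cancels that of $G$, so $R\in H_0^1(\D)$ solves $\hat L R=-\Delta\chi\,\Phi_k(x_0,\cdot)-2\nabla\chi\cdot\nabla\Phi_k(x_0,\cdot)$ with right-hand side supported where $\chi$ varies and hence bounded away from $x_0$. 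A direct computation using the large-argument Hankel asymptotic $|H_0^{(1)}(z)|\sim(2/(\pi z))^{1/2}$ (together with Lemma~\ref{lem:Bessel} for the small-argument control) in $d=2$ and the explicit $1/|y-x_0|$ formula in $d=3$ yields both $\|\chi\Phi_k(x_0,\cdot)\|_{L^2(\D)}\ls k^{(d-3)/2}$ and $\LtD{\hat L R}\ls k^{(d-1)/2}$. Applying Lemma~\ref{lem:stabl} to $R$ then gives $\LtD{R}\ls k^{-1}\LtD{\hat L R}\ls k^{(d-3)/2}$, and the triangle inequality concludes the Green's-function estimate.

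The main obstacle is the uniform Green's-function $L^2$ bound, where one must match the sharp power of $k$ in both dimensions. Tracking the correct Hankel asymptotic in $d=2$ is delicate, but the cutoff decomposition isolates the singularity and reduces the issue to a smooth remainder $R$ governed by a linear PML problem with $L^2$ right-hand side, for which Lemma~\ref{lem:stabl} supplies exactly the $k^{-1}$ gain needed to recover $k^{(d-3)/2}$.
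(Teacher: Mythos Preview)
Your reduction to the linear estimate $\Linf{\hat w}\ls k^{(d-3)/2}\LtD{\hat g}$ for $\hat L\hat w=\hat g$ is exactly how the paper begins, and your closing absorption argument using \eqref{eq:stabap} and the smallness hypothesis matches the paper's.

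Where you diverge is in proving the linear $L^\infty$--$L^2$ bound. The paper does not use the PML Green's function. Instead it invokes the explicit separation-of-variables expansion from \cite{liwu2019}, writing $\hat w=v+w+\zeta$ in Fourier (resp.\ spherical-harmonic) modes: $v$ solves the free-space Helmholtz equation with source $\hat g\chi_\Om$ and is represented via the free-space Green's function; $w$ is similar but with a modified source supported in $\hOm$, its Fourier coefficients damped via Lemma~\ref{lem:Bessel}; and $\zeta$ is a Bessel-function correction whose coefficients carry the factor $e^{-\frac12 k\sigma_0 L}$ from the PML, converted to $k^{(d-3)/2}$ via $\sigma_0L\gtrsim 1$. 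Your parametrix splitting $G(x_0,\cdot)=\chi\Phi_k+R$ is a genuinely different and more structural route: it isolates the free-space singularity once and handles the remainder $R$ as a single smooth-data PML problem via Lemma~\ref{lem:stabl}, rather than mode-by-mode. Your approach is cleaner and would transfer to non-radial PML geometries provided the analogue of Lemma~\ref{lem:stabl} holds; the paper's expansion, by contrast, is tied to the circular/spherical PML but yields finer information (e.g.\ the explicit exponential smallness of $\zeta$, which also feeds into Remark~\ref{rm:stabap}).

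Two small points. First, your invocation of Lemma~\ref{lem:Bessel} ``for the small-argument control'' of $H_0^{(1)}$ is a misattribution: that lemma concerns complex arguments and exponential decay in $\Im z$, not the local logarithmic singularity. The bound $|H_0^{(1)}(t)|\ls t^{-1/2}$ for all $t>0$ that you need is the one the paper quotes from \cite[p.~211]{watson1944}. Second, your sign in $\hat L R$ is flipped (it should be $+\Delta\chi\,\Phi_k+2\nabla\chi\cdot\nabla\Phi_k$), but this is immaterial for the norm estimate.
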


\begin{proof}
The estimate \eqref{eq:Linfap} is a direct consequence of the following estimate
\begin{equation}
\label{Linf} 
\Linf{\hat w}\ls k^{\frac{d-3}{2}}\LtD{\hat g},
\end{equation}
where $\hat w\in H_0^1(\D)$ solves the linear problem $\hat L \hat w = \hat g$.
In fact, by rewriting the system \eqref{ap1} 
to \eqref{eL1}, 
then applying the estimate \eqref{Linf} to \eqref{eL1} 
with $\hat g=g+k^2\vep \oneo \bgS{2\abs{\phi+\uinc}^2\hat w^\phi + (\phi + \uinc)^2 \overline{\hat w^\phi}}$, we obtain  
\begin{align*} 
\Linf{\hat w^\phi} &\ls k^{\frac{d-3}{2}}\bgS{\norm{g}_0+k^2\vep \norm{\phi+\uinc}_{L^\infty(\Omega_0)}^2 \norm{\hat w^\phi}_{0,\Omega_0}}
\ls k^{\frac{d-3}{2}}\big(\LtD{g}+2k\theta_0 \Lt{\hat w^\phi}{\Omz}\big),
\end{align*}
which, together with \eqref{eq:stabap}, gives \eqref{eq:Linfap}. It remains to establish \eqref{Linf}.

Case 1: $d=2$.  We first notice that $0\leq r=|x| \leq R_0:=R-\dist(\Gamma,\Omz)$
for $x\in\Omz$. The solution $\hat w$ in $\Omega_0$ can be solved by separation of variables and expressed 
by Fourier expansions (see \cite[(2.23)--(2.24)]{liwu2019}):
\begin{equation}\label{eq:vwzeta} 
\hat w = v+w+\zeta \,\,\,\text{with}\,\,\,  v=\sum_{n\in\Z} v_n(r) e^{\i n\theta},\, w=\sum_{n\in\Z} w_n(r) e^{\i n\theta}, \,\zeta=\sum_{n\in\Z} \zeta_n(r) e^{\i n\theta},
\end{equation}
where the coefficients $v_n (r)$, $v_n (r)$ and $\zeta_n(r)$ are given by 
\begin{align*}
v_n (r) &= \frac{\pi\i}{2}\Jn (kr)\int_r^{R} \Hn(kt)g_n(t)t dt + \frac{\pi\i}{2}\Hn (kr)\intr \Jn(kt)g_n(t)t dt, \\
w_n(r) &= \frac{\pi\i}{2}\Jn (kr)\int_R^{\hR} \Hn(k\ttn)g_n(t)t dt, \\
\zeta_n(r) &= \hC_n\Jn(kr),\quad \hC_n = -\frac{\pi\i}{2}\frac{\Hn(k\thR)}{\Jn(k\thR)}\int_0^{\hR}\Jn(k\ttn)g_n(t)tdt,
\end{align*}
where $J_n$ denotes the Bessel function of the first kind with order $n$, and 
$g_n(r)=\frac{1}{2\pi}\int_0^{2\pi} \hat g(r,\theta)e^{-\i n\theta}d\theta$ is the Fourier coefficient of $\hat g$ on $\partial\B_r$. 
It is easy to see that $v$ is the solution to the linear Helmholtz equation $-\Delta v-k^2 v = \hat g \chi_\Om$ with the Sommerfeld radiation condition (i.e. \eqref{eq:Somm}). Thus, 
\[ v(x)=\int_{\R^2} \hat g(y) \chi_\Om (y) G(x,y)dy \quad\text{for }x\in \Omz,
\]
where $G(x,y){=\frac{\i}{4}H_0^{(1)}(k|x-y|)}$ denotes the standard Green's function. From \cite[p. 211]{watson1944}, we have
\eqn{|G(x,y)|\ls \frac1{\sqrt{k|x-y|}} \qaq \int_\Om |G(x,y)|^2{\rm d}y\ls k^{-1},}
hence we can easily get 
\[ \Linf{v} \ls k^{-1/2}\Lt{\hat g}{\Om}. \]
Similarly, for $x\in\Omz$, we have
\[ w(x)=\int_{\R^2} \tilde g(y) \chi_{\hOm}(y) G(x,y)dy,
\]
where $\tilde g=\sum_{n\in\N} \tilde g_n(r)e^{\i n\theta}$ and $\tilde g_n(r):=g_n(r)\Hn(k\tr)/\Hn(kr)$. According to \cite[\S 10.21(i)]{olver2010}, the positive zeros of the two real Bessel functions $\Jn(kr)$ and $Y_n(kr)$ are interlaced and hence, $\Hn(kr)=\Jn(kr)+\i Y_n(kr)\not=0$. From Lemma \ref{lem:Bessel}, we have $\abs{\tilde g_n(r)}\leq\abs{g_n(r)}$ and then 
\[ \Linf{w} \ls k^{-1/2}\Lt{\tilde g}{\hOm}\ls k^{-1/2}\Lt{\hat g}{\hOm}.\]
For the last term $\zeta=\sum_{n\in \N}\zeta_n(r)e^{\i n\theta}$ in \eqref{eq:vwzeta},  by applying \cite[(3.24) and (3.33)]{liwu2019}, we get
\[
\abs{\hC_n} \ls e^{-\frac 12 k\siz L}\left(\inthR r\abs{ g_n(r)}^2 dr\right)^{\frac 12}.
\]
Noting that $J_0(kr)^2+2\sum_{n=1}^\infty J_n(kr)^2= 1$ (cf. \cite[(10.23.3)]{olver2010}), we have
\[ \abs{\zeta} =\abs{\sum_{n\in\N} \hC_n\Jn(kr)e^{\i n\theta}}\leq \left(\sum_{n\in\N}\abs{\hC_n}^2\right)^\frac12\left(\sum_{n\in\N}J_n(kr)^2\right)^\frac12 \ls e^{-\frac 12 k\siz L}\LtD{\hat g}.
\]
Since  $\siz L\gs 1$ (see \eqref{eq:assumption}), we have $e^{-\frac 12 k\siz L}\ls k^{-1/2}$, then
\[ \Linf{\zeta} \ls e^{-\frac 12 k\siz L}\LtD{\hat g} \ls k^{-1/2}\LtD{\hat g}. 
\]
Therefore, we have confirmed the validity of \eqref{Linf} for $d=2$.

Case 2: $d=3$. The proof is similar to the case of $d=2$. From \cite[(2.30)--(2.31)]{liwu2019}, $\hat w$ can be expressed by the harmonic expansion
$\hat w = v+w+\zeta$ in $\Omega_0$, 
with
\eqn{v=\sum_{l=0}^{\infty} \sum_{m=-l}^l v_l^m(r) Y_l^m(\theta,\varphi),\; w=\sum_{l=0}^{\infty} \sum_{m=-l}^l w_l^m(r) Y_l^m(\theta,\varphi), \;\zeta=\sum_{l=0}^{\infty} \sum_{m=-l}^l \zeta_l^m(r) Y_l^m(\theta,\varphi),
}
where $Y_l^m$ is the standard spherical harmonics (see \cite[etc.]{watson1944}) and 
\begin{align*}
v_l^m(r) &= \frac{\pi \i}{2} r^{-\frac12}J_{\nu}(kr)\intrR H_\nu^{(1)}(kt)g_l^m(t) t^{\frac32} dt+ \frac{\pi \i}{2} r^{-\frac12}H_{\nu}^{(1)}(kr)\intr J_\nu(kt) g_l^m(t) t^{\frac32} dt,  \\
w_l^m(r) &= \frac{\pi \i}{2} r^{-\frac12}J_{\nu}(kr)\intRhR H_\nu^{(1)}(k\ttn) \be^{-\frac12}g_l^m(t) t^{\frac32} dt   \\
\zeta_l^m(r) &= \hC_\nu r^{-\frac12} J_\nu(kr),  \quad \hC_\nu=-\frac{\pi \i}{2}\frac{H_\nu^{(1)}(k\thR)}{J_\nu(k\thR)} \inthR J_\nu(k\ttn)\be^{-\frac12} g_l^m(t)t^{\frac32} dt, 
\end{align*}
where $\nu := l+\frac12$ and $g_l^m(r)=\int_0^{2\pi}\int_0^{\pi} \hat g(r,\theta,\varphi)Y_l^{-m}(\theta,\varphi)\sin\theta d\theta d\varphi$. Noting that $v$ is the solution to the linear Helmholtz equation $-\Delta v-k^2 v =\hat g\chi_\Om$ with Sommerfeld radiation condition. Thus,
\eqn{v(x)=\int_{\R^3}\hat g(y)\chi_\Om(y) G(x,y)dy,}
where $G(x,y)=\frac{e^{\i k|x-y|}}{4\pi |x-y|}$ denotes the standard Green's function. Obviously, $\abs{G(x,y)} \ls \frac 1{|x-y|}$, then we get 
\eqn{\int_{\Om}\abs{G(x,y)}^2dy \ls 1, \quad\mbox{and then,}\quad \Linf{v} \ls \Lt{\hat g}{\Om}.}
Similarly, we have
\eqn{w(x)=\int_{\R^3}\tilde g(y)\chi_{\hOm}(y)G(x,y)dy,}
where 
\eqn{\tilde g=\sum_{l=0}^{\infty}\sum_{m=-l}^l \tilde g_l^m(r) Y_l^m(\theta,\varphi) \qaq \tilde g_l^m(r)=\be^{-1/2}g_l^m(r)H_\nu^{(1)}(k\tr)/H_\nu^{(1)}(kr).} 
Using Lemma \ref{lem:Bessel} again, we have $\abs{\tilde g_l^m(r)}\ls \abs{g_l^m(r)}$ and then
\[ \Linf{w}\ls \Lt{\tilde g}{\hOm} \ls \Lt{\hat g}{\hOm}.\]
For the last term $\zeta$, from \cite[(3.40)]{liwu2019} and a similar proof to \cite[(3.24)]{liwu2019}, we have
\eqn{\abs{\hC_\nu} \ls \sqrt{k} e^{-\frac12 k\siz L} \left(\inthR r^2 \abs{g_l^m(r)}^2dr \right)^{\frac12}.}
Hence,
\begin{align*}
\abs{\zeta} &= \abs{\sum_{l=0}^\infty \sum_{m=-l}^l \hC_\nu r^{-\frac12}J_\nu(kr)Y_l^m(\theta,\varphi)} = \abs{ \sum_{l=0}^\infty \sum_{m=-l}^l \hC_\nu \sqrt{\frac{2k}{\pi}} j_l(kr) Y_l^m(\theta,\varphi)}  \\
 &\ls ke^{-\frac12 k\siz L} \LtD{\hat g} \left( \sum_{l=0}^\infty j_l^2(kr)\sum_{m=-l}^l \abs{Y_l^m}^2  \right)^{\frac12},
\end{align*}
where $j_l$ denotes the spherical Bessel function of the first kind and order $l$. From \cite[(2.4.105)]{nedelec2001} and \cite[(10.60.12)]{olver2010}, we have
\[ \sum_{m=-l}^l\abs{Y_l^m}^2 = \frac{2l+1}{4\pi} \qaq \sum_{l=0}^{\infty}(2l+1)j_l^2(kr)=1, 
\]
then we arrive at
\eqn{\Linf{\zeta} \ls ke^{-\frac12 k\siz L}\LtD{\hat g}\ls \LtD{\hat g},}
where we have used $\sigma_0 L \gtrsim 1$. Therefore, \eqref{Linf} also holds for $d=3$. 
%
\end{proof}

\begin{remark}\label{rm:stabap}
We can easily obtain the estimate from the proof of Lemma \ref{lem:linfap} above that
\begin{equation}\label{eq:Linfap1}
\norm{\hat w^\phi}_{L^{\infty}(\Omega_1)}\ls k^{\frac{d-3}{2}}\LtD{g}
\end{equation}
for any subdomain $\Omega_1$ satisfying $\Omz\subset\subset\Omega_1\subset\subset\Om$ and $\dist(\Gamma,\partial\Omega_1)\eqsim \dist(\partial\Omega_1,\partial\Omz)$.
\end{remark}

\subsection{Existence and stability estimates for the nonlinear PML problem}\label{sec:anas}
In this subsection, we study the well-posedness of the nonlinear PML problem \eqref{eq:PML}. This is carried out by the Newton's iterative process \eqref{ip1}. 
We start with some uniform stability estimates for the solutions $\{\hu^l\}_{l\geq 1}$ 
to \eqref{ip1} in terms of the iteration number $l$, 
with their bounds depending on the constant 
\eq{\label{Mf} \Mf:= \norm{f}_{0,\Omega}+k^2\vep\norm{\uinc}_{L^6(\Om_0)}^3.}
\begin{lemma}\label{lem:pmlip} Under the conditions of Lemma \ref{lem:stabl}, there exists a positive constant $\theta_1\ls 1$ such that if 
\begin{align}
\label{nlcond1}
&k\norm{\hu^0}_{0,\Omega_0}\ls\Mf,\quad \Linf{\hu^0}\ls k^{\frac{d-3}2}\Mf,\\
&\max\big\{k^{d-2}\vep\Mf^2,~k\vep\Linf{\uinc}^2 \big\} \leq\theta_1,\label{nlcond1b}
\end{align}
then the following estimates hold for $l=1,2,\cdots$:
\begin{equation}
k\LtD{\hu^l}+\He{\hu^l}+k^{-1}\sHt{\hu^l}{\OchO} \ls \Mf,\quad \Linf{\hu^l}\ls k^{\frac{d-3}2}\Mf. \label{eq:stabip}
\end{equation}
\end{lemma}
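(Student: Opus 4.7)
\textbf{Proof plan for Lemma \ref{lem:pmlip}.} I would argue by induction on $l$. The base case ($l=0$) is exactly the hypothesis \eqref{nlcond1}. For the inductive step, assume the estimates \eqref{eq:stabip} already hold with $l$ replaced by some $l\geq 0$ (with the convention that for $l=0$ the bounds are \eqref{nlcond1}), and show them for $l+1$. The crucial observation is that the Newton equation \eqref{ip1} is precisely the auxiliary linearized problem \eqref{ap1} with $\phi = \hu^l$ and right-hand side
\[
g = f - k^2\vep\,\oneo\bgS{2\abs{\hu^l+\uinc}^2\hu^l - (\hu^l+\uinc)^2\overline{\uinc}}.
\]
The well-posedness of \eqref{ip1} therefore follows immediately from the discussion in Section~\ref{au-problem}, and the desired bounds will come from Lemmas~\ref{lem:stabap} and~\ref{lem:linfap} once we verify the smallness hypothesis on $\phi$ and bound $\LtD{g}$ by $\Mf$.

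The smallness check is straightforward: by the induction hypothesis $\Linf{\hu^l}\ls k^{(d-3)/2}\Mf$, hence
\[
k\vep\Linf{\hu^l}^2 \ls k^{d-2}\vep\Mf^2 \leq \theta_1,
\]
and together with \eqref{nlcond1b} this yields $\max\{k\vep\Linf{\hu^l}^2,\,k\vep\Linf{\uinc}^2\}\leq\theta_0$, provided $\theta_1$ is chosen sufficiently small relative to the implicit constant and $\theta_0$ from Lemma~\ref{lem:stabap}. The main work is the bound on $\LtD{g}$. Here I would first algebraically factor the nonlinear residual: writing $v=\hu^l$, $w=\uinc$, a direct computation gives
\[
2|v+w|^2 v - (v+w)^2\overline{w} = (v+w)\bgS{2|v|^2 + v\overline{w} - |w|^2},
\]
so that pointwise $\bgv{\,2|v+w|^2 v - (v+w)^2\overline{w}\,}\ls (|v|+|w|)^3$, and by H\"older's inequality on $\Omz$,
\[
\LtD{g} \leq \norm{f}_{0,\Om} + C k^2\vep\bgS{\norm{v}_{L^6(\Omz)} + \norm{w}_{L^6(\Omz)}}^3.
\]

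To control $\norm{v}_{L^6(\Omz)}^3$ I would use the interpolation inequality $\norm{v}_{L^6(\Omz)}^3\leq \Linf{v}^2\,\Lt{v}{\Omz}$ and invoke the induction hypothesis to get $\norm{v}_{L^6(\Omz)}^3\ls k^{d-4}\Mf^3$, so that $k^2\vep\norm{v}_{L^6(\Omz)}^3\ls k^{d-2}\vep\Mf^2\cdot\Mf\leq\theta_1\Mf$. The $w$-contribution $k^2\vep\norm{\uinc}_{L^6(\Omz)}^3$ is already built into the definition \eqref{Mf} of $\Mf$, and the mixed terms are absorbed by Young's inequality $a^2 b + ab^2 \ls a^3 + b^3$. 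Combining, $\LtD{g}\ls\Mf$, and Lemmas~\ref{lem:stabap} and~\ref{lem:linfap} applied to \eqref{ip1} close the induction by yielding both
\[
k\LtD{\hu^{l+1}}+\He{\hu^{l+1}}+k^{-1}\sHt{\hu^{l+1}}{\OchO}\ls \Mf \qaq \Linf{\hu^{l+1}}\ls k^{(d-3)/2}\Mf.
\]

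The main obstacle I expect is the algebraic bookkeeping for the Newton residual: the term $(\hu^l+\uinc)^2\overline{\uinc}$ does not combine naturally with $2|\hu^l+\uinc|^2\hu^l$ unless one factors as above, and then one must ensure that every cubic monomial produced can be controlled by $\Mf$ with the correct powers of $k$. In particular the mixed monomials $|v|^2|w|$ and $|v||w|^2$ are the ones most likely to create trouble, and the choice of the $L^6$--interpolation (rather than using $L^\infty$ on the large factor throughout) is what allows the smallness parameter $k^{d-2}\vep\Mf^2$ to appear on the right-hand side, so that the induction constants close without accumulating in $l$.
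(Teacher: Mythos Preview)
Your proposal is correct and follows essentially the same route as the paper: induction on $l$, identifying \eqref{ip1} as an instance of \eqref{ap1} with $\phi=\hu^l$, checking the smallness hypothesis via the inductive $L^\infty$ bound, and closing the loop with Lemmas~\ref{lem:stabap}--\ref{lem:linfap} once $\LtD{g}\ls\Mf$. The only cosmetic difference is that the paper expands the residual directly as $2|\hu^l|^2\hu^l + 2|\hu^l|^2\uinc + (\hu^l)^2\overline{\uinc} - |\uinc|^2\uinc$ and bounds each monomial by $L^\infty\times L^\infty\times L^2$ (tracking explicit constants $\widetilde C_\mL,\widetilde C_\infty$ to keep them uniform in $l$), whereas you factor first and pass through $L^6$ via $\norm{v}_{L^6}^3\le\Linf{v}^2\Lt{v}{\Omz}$; both yield the same control $k^2\vep\norm{\hu^l}_{L^6(\Omz)}^3\ls k^{d-2}\vep\Mf^2\cdot\Mf\le\theta_1\Mf$.
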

\begin{proof}  
    We set 
    \begin{align*}
    \hat{f}^l := &\;f - k^2 \vep \oneo \bgS{2 |\hu^l + \uinc|^2 \hu^l-(\hu^l+\uinc)^2\overline{\uinc}}\\
     = &\;f - k^2 \vep \oneo \bgS{2 |\hu^l|^2 \hu^l + 2|\hu^l|^2 \uinc + (\hu^l)^2 \overline{\uinc} - |\uinc|^2\uinc}.
    \end{align*}
First, we let $C_\mL$ be the constant from \eqref{eq:stabap} and $C_\infty$ be the hidden generic 
    constant in \eqref{eq:Linfap}. Denote by $\widetilde C_\mL = 4 C_\mL$ and $\widetilde C_\infty = 2 C_\infty$ and let
    $\theta_1 \leq \min \big\{\theta_0, \theta_0/\widetilde C_\infty ^2, 1/(2\widetilde C_\mL \widetilde C_\infty^2 + 3 \widetilde C_\mL \widetilde C_\infty)\big\}$, where $\theta_0$ is from Lemma \ref{lem:stabap}. We also assume that the initial value satisfies $k \|\hu^0\|_{0,\Omega_0}\leq \widetilde C_\mL M(f)$ and $\|{\hu^0}\|_{L^\infty(\Omega_0)}\leq \widetilde C_\infty  k^{\frac{d-3}2}M(f)$. 
    
    Next, we suppose that the following estimates hold for $l=n$ with $n \geq 0$: 
    $$ k\big\Vert \hu^n \big\Vert_{0,\Omega_0} \leq \widetilde{C}_\mathcal{L}M(f) \qaq \big\Vert \hu^n \big\Vert_{L^\infty(\Omz)}\leq\widetilde{C}_\infty k^{\frac{d-3}2}M(f).$$
    Then we can directly get from \eqref{Mf} and \eqref{nlcond1b} that 
    \begin{equation}\label{tfscMf} 
    \begin{aligned}
    \Vert\hat{f}^n \Vert_0 &\leq M(f) + 2 \widetilde C_\mL \widetilde C_\infty^2 k^{d-2}\varepsilon M(f)^3 + 3 \widetilde C_\mL \widetilde C_\infty \theta_1^{\frac12} \varepsilon^{\frac12} k^{\frac{d-2}{2}} M(f)^2 \\
    &\leq \big(1+2 \widetilde C_\mL \widetilde C_\infty^2 \theta_1 + 3 \widetilde C_\mL \widetilde C_\infty \theta_1\big) M(f) \leq 2 M(f),
    \end{aligned}
    \end{equation}
    and $k\varepsilon\Vert\hu^n\Vert_{L^\infty(\Omz)}^2\leq\widetilde{C}_\infty^2k^{d-2}\varepsilon M(f)^2\leq\widetilde{C}_\infty^2\theta_1\leq\theta_0.$
    Therefore, using Lemmas \ref{lem:stabap} and \ref{lem:linfap} we can deduce 
    \[
      k\LtD{\hu^{n+1}}+\He{\hu^{n+1}}+k^{-1}\sHt{\hu^{n+1}}{\OchO} \leq 2 C_\mathcal{L}\big\Vert\hat{f}^{n}\big\Vert_0 \leq \widetilde{C}_\mathcal{L} M(f) \qaq \big\Vert\hu^{n+1}\big\Vert_{L^\infty(\Omega_0)} \leq \widetilde{C}_\infty k^{\frac{d-3}2}M(f).
      \]
Noting that $\widetilde C_\mL$ and $\widetilde C_\infty$ are independent of the iteration number $l$, the proof is completed by induction.

\end{proof}

Now we can establish the well-posedness of the nonlinear PML problem \eqref{eq:PML}. 
\begin{theorem}\label{thm:stabpml}
Under the conditions of Lemma \ref{lem:stabl}, there exists a positive constant $\theta_2 \ls 1$ such that if  
\begin{equation}
\label{eq:cond}
\max\big\{k^{d-2}\vep\Mf^2,~k\vep\Linf{\uinc}^2 \big\} \leq\theta_2,
\end{equation}
then the nonlinear PML problem \eqref{eq:PML} attains a unique solution $\hu$ satisfying the estimates:
\begin{equation} \label{eq:stab}
k\LtD{\hu}+\He{\hu}+k^{-1}\sHt{\hu}{\OchO} \ls \Mf,\quad \Linf{\hu}\ls k^{\frac{d-3}2}\Mf. 
\end{equation}
\end{theorem}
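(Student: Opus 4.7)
The plan is to construct the solution $\hu$ as the limit of the Newton sequence $\{\hu^l\}_{l\ge 0}$ defined by \eqref{ip1}, starting from the trivial initial guess $\hu^0=0$. With this choice the hypotheses \eqref{nlcond1} of Lemma \ref{lem:pmlip} are trivially satisfied. Provided we pick $\theta_2\le\theta_1$ in \eqref{eq:cond}, Lemma \ref{lem:pmlip} then yields the uniform-in-$l$ bounds
\eq{\label{eq:unifl}
k\LtD{\hu^l}+\He{\hu^l}+k^{-1}\sHt{\hu^l}{\OchO}\ls \Mf,\qquad \Linf{\hu^l}\ls k^{(d-3)/2}\Mf
}
for every $l\ge 1$.

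The main step is to show that $\{\hu^l\}$ is Cauchy in the energy norm. Setting $e^l:=\hu^{l+1}-\hu^l$, I would subtract the Newton equation \eqref{ip1} at level $l-1$ from the one at level $l$ and rearrange, so that $e^l$ satisfies an equation of the auxiliary type \eqref{ap1} with $\phi=\hu^l$ and with a right-hand side of the schematic form
\eqn{
g^l = -k^2\vep\oneo\bigl(P(\hu^l,\hu^{l-1},\uinc)\,e^{l-1}\bigr),
}
where $P$ collects the quadratic factors produced by the identities
$|a|^2-|b|^2=\Re\bigl((a-b)\overline{(a+b)}\bigr)$ and $a^2-b^2=(a-b)(a+b)$ applied to $a=\hu^l+\uinc$, $b=\hu^{l-1}+\uinc$. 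Using the uniform $L^\infty$ bound in \eqref{eq:unifl} together with the bound on $\uinc$, I would estimate $\|g^l\|_0\ls k^2\vep\bigl(k^{d-3}\Mf^2+\Linf{\uinc}^2\bigr)\LtD{e^{l-1}}$. Then applying Lemma \ref{lem:stabap} to the equation for $e^l$ (whose hypothesis is guaranteed by \eqref{eq:cond} because $\phi=\hu^l$ satisfies the same $L^\infty$ bound as $\hu^0$), I would obtain
\eqn{
k\LtD{e^l}+\He{e^l}\ls \LtD{g^l}\ls \bigl(k^{d-2}\vep\Mf^2+k\vep\Linf{\uinc}^2\bigr)\,k\LtD{e^{l-1}}\le C\theta_2\,k\LtD{e^{l-1}}.
}
Choosing $\theta_2\le\theta_1$ small enough so that $C\theta_2\le \tfrac12$ gives a strict contraction, hence $\{\hu^l\}$ is a Cauchy sequence in $H^1_0(\D)$, with a limit $\hu\in H^1_0(\D)$.

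Passing to the limit in the weak form of \eqref{ip1} is straightforward: the nonlinear terms converge in $L^2(\Omz)$ because the uniform $L^\infty$ bound from \eqref{eq:unifl} allows the dominated convergence argument on $\Omz$, while the linear elliptic part converges by weak continuity of $\hat L$. Hence $\hu$ solves \eqref{eq:PML} (or equivalently \eqref{varnltpml}), and the bounds \eqref{eq:stab} follow by taking $l\to\infty$ in \eqref{eq:unifl} with lower semicontinuity of the norms; the improved $H^2$ estimate on $\OchO$ is then recovered by viewing $\hu$ as the solution to the linear problem $\hat L\hu = f+k^2\vep\oneo|\hu+\uinc|^2(\hu+\uinc)$ and invoking the regularity part of Lemma \ref{lem:stabl}. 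For uniqueness, if $\hu_1,\hu_2$ both solve \eqref{eq:PML} and satisfy \eqref{eq:stab}, then their difference $e:=\hu_1-\hu_2$ satisfies an equation of the form \eqref{ap1} (with $\phi=\hu_1$ and $g$ a quadratic-in-$e$-type term built from $\hu_1,\hu_2,\uinc$), so the same argument used for the Cauchy estimate above yields $k\LtD{e}\le C\theta_2\,k\LtD{e}$, forcing $e=0$ when $\theta_2$ is small.

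The main obstacle, as is typical for Newton-type convergence proofs on nonlinear Helmholtz problems, lies in the algebraic manipulation that puts the difference of two consecutive Newton equations into the precise auxiliary form \eqref{ap1}, and in verifying that the resulting right-hand side $g^l$ is indeed controlled in $L^2$ by $\LtD{e^{l-1}}$ with a constant of order $k^{d-2}\vep\Mf^2+k\vep\Linf{\uinc}^2$. The uniform $L^\infty$ bound delivered by Lemma \ref{lem:linfap} and Lemma \ref{lem:pmlip} is what makes this possible; without it one could only bound $g^l$ via Sobolev embedding, which would either fail in three dimensions or introduce a logarithmic factor. All remaining pieces are direct applications of the stability estimates already established in Section \ref{au-problem}.
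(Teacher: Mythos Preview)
Your proposal is correct and follows essentially the same route as the paper: construct $\hu$ as the limit of the Newton sequence \eqref{ip1}, derive an auxiliary equation of type \eqref{ap1} for the increments $\hu^{l+1}-\hu^l$, use the uniform $L^\infty(\Omz)$ bounds from Lemma~\ref{lem:pmlip} to get a contraction via Lemma~\ref{lem:stabap}, and repeat the argument for uniqueness. The only cosmetic difference is that the paper also shows the sequence is Cauchy in the piecewise $H^2$-seminorm (via $|v^l|_{2,\OchO}\ls k\He{v^{l-1}}$) rather than recovering the $H^2$ bound a posteriori as you do, but both routes are valid.
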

\begin{proof}
Recalling Newton's sequence $\{\hu^l\}$ from \eqref{ip1},
we can see that the difference $v^l=\hu^{l+1}-\hu^l$ satisfies 
\eqn{ &\;\hat L v^l - k^2 \vep \oneo \BgS{2\abs{\hu^l+\uinc}^2 v^l + \bgS{\hu^l+\uinc}^2 \overline{v^l}} \\
= &\;k^2 \vep \oneo \BgS{\bgS{\hu^l+\uinc}^2 - \bgS{\hu^{l-1}+\uinc}^2}\overline{\big(\hu^{l}+\uinc\big)} - 2k^2\vep\oneo \abs{\hu^{l-1}+\uinc}^2 v^{l-1}.
}
By using Lemma \ref{lem:stabap} and noting \eqref{eq:stabip} and \eqref{eq:cond}, we get
\begin{align*}
\He{v^l} &\ls k^2\vep \Big(\Linf{\hu^l+\uinc}^2+\Linf{\hu^{l-1}+\uinc}^2\Big) \Lt{v^{l-1}}{\Om_0} \\ 
&\ls \big(k^{d-2}\vep\Mf^2+k\vep\Linf{\uinc}^2\big)k \Lt{v^{l-1}}{\Om_0}. 
\end{align*}
Then from \eqref{eq:cond}, we let $\theta_2$ be small enough such that 
\[\He{v^l} \leq \tfrac12 k\Lt{v^{l-1}}{\Om_0} \leq \tfrac12 \He{v^{l-1}}, \]
hence we can deduce $\He{v^l} \leq 2^{-l} \He{v^0}$ by induction, which implies that $\{\hu^l\}$ is a Cauchy sequence with respect to the energy norm. Moreover, by using Lemma~\ref{lem:stabap} with $\phi=\hu^l$ and the above estimate again, we  obtain
\[ \sHt{v^l}{\OchO} \ls k\He{v^{l-1}},
\]
which implies that $\{\hu^l\}$ is also a Cauchy sequence with respect to the piecewise $H^2$-norm. Therefore, by taking $l\to \infty$ in \eqref{ip1}, $\hu:=\lim_{l\to\infty}\hu^l$ is a solution to \eqref{eq:PML} 
and satisfies the estimates \eqref{eq:stab}.

Suppose that $w$ is another solution to \eqref{eq:PML}, 
with the estimates \eqref{eq:stab}, then $v:=\hu-w$ solves 
\eqn{ &\;\hat L v - k^2 \vep \oneo \bgS{2\abs{w+\uinc}^2 v + (w+\uinc)^2 \overline{v}} \\
= &\;k^2 \vep \oneo \BgS{\bgS{\hu+\uinc}^2 - \bgS{w+\uinc}^2}\overline{\big(\hu+\uinc\big)} - 2k^2\vep\oneo \abs{w+\uinc}^2 v.
}
Similarly to the above derivations, if $\theta_2$ is small enough, we can prove that $\He{v} \leq 2^{-1}\He{v}$, which implies that $v=0$. 
This completes the proof of Theorem \ref{thm:stabpml}.
\end{proof}
\begin{remark}\label{rmk:low-energy}
Theorem \ref{thm:stabpml} says that the nonlinear PML problem attains a unique solution 
with low energy, i.e., among all the solutions with energy below an upper bound (as specified by the stability estimates 
in \eqref{eq:stab}), under the condition \eqref{eq:cond} indicating that the incident wave and the nonlinearity may not be too strong.
But this result does not exclude the possibility of multiple solutions to the nonlinear PML problem, nor does it cover the case of strong nonlinearity. 


\end{remark}

Before proceeding, we recall the following the Nirenberg inequality \cite{nirenberg1959}:
\eqn{
\norm{v}_{L^p(\Om)}\le C_{\mathrm{Nir},p}\norm{v}_{1,\Om}^{\frac{d}{2}-\frac dp}\norm{v}_{0,\Om}^{1-\frac{d}{2}+\frac dp}\quad\forall\, v\in H^1(\Om),\; 2\leq p \leq \tfrac{2d}{d-2}.}
Denote by $\cni=\max\{(1+\sigma_0^2)^{\frac d8}C_{\mathrm{Nir},4}, (1+\sigma_0^2)^{\frac d6}C_{\mathrm{Nir},6}\}$. By noting \eqref{h12d}--\eqref{h13d} and $k\gg 1$, we have
\eq{\label{ni}
\norm{v}_{L^4(\Om)} \leq \cni k^{\frac d4 -1}\He{v}\qaq \norm{v}_{L^6(\Om)}\le {\cni}k^{\frac{d}{3}-1}\He{v}.}

The following theorem gives a quadratic convergence result for the Newton's iteration \eqref{ip1}.
\begin{theorem}\label{thm:quad}
  Let $\hu^*$ be one of the multiple solutions to the nonlinear PML problem \eqref{eq:PML} and  $w^* \in H_0^1(\D)$ be the solution to the linearized PML problem
  \eq{\label{eq:ws}\hat L w^* - k^2 \vep \oneo \bgS{2\abs{\hu^* + \uinc}^2 w^* + (\hu^* + \uinc)^2 \overline{w^*}} = g^*.}
Suppose the following stability estimate holds for any given function $g^*\in L^2(\D)$,
  \eq{\label{eq:stab:ws} \He{w^*} \leq C^*\norm{g^*}_0}
 where $C^*$ may depend on $k$ and $\hu^*$.   Denote  by
  \eqn{N^* := 15C^ *\cni^2  k^{\frac{d}{2}}\vep\Linf{\hu^*+\uinc} \qaq \ta^* := \min \Ap{\big(N^*\big)^{-1}, \cni^{-1}k^{1-\frac{d}{2}}\Linf{\hu^*+\uinc}}. }
  If the initial guess $\hu^0$ satisfies $\He{\hu^0-\hu^*}\le\ta^*$,
   then the Newton's iterative sequence $\{\hu^l\}_{l\geq 1}$ defined by \eqref{ip1} converges quadratically to $\hu^*$, namely,
  \begin{equation}\label{eq:quad}
  \He{\hu^{l+1} - \hu^*} \ls N^* \He{\hu^l - \hu^*}^2, \quad l =0, 1, 2, \cdots.
  \end{equation}
  \end{theorem}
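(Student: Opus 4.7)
The plan is to convert the Newton recursion \eqref{ip1} into an equation for the error $e^l := \hu^l - \hu^*$ in which the operator on the left is precisely the linearised operator around $\hu^*$ whose stability is furnished by \eqref{eq:stab:ws}, and whose right-hand side is ``quadratic in $e^l$'' in an appropriate sense. Writing $F(u):=\hat L u - k^2\vep\oneo|u+\uinc|^2(u+\uinc)-f$, Newton's equation reads $F'(\hu^l)(\hu^{l+1}-\hu^l)=-F(\hu^l)$. Since $F(\hu^*)=0$, an elementary rearrangement yields $F'(\hu^l)e^{l+1}=F'(\hu^l)e^l-[F(\hu^l)-F(\hu^*)]$. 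I will expand the right-hand side via the pointwise identities
\eqn{|a+b|^2-|a|^2 = a\bar b + \bar a b + |b|^2,\qquad (a+b)^2-a^2 = 2ab + b^2,}
applied with $a=\hu^*+\uinc$ and $b=e^l$, and then rewrite $F'(\hu^*)e^{l+1}=F'(\hu^l)e^{l+1}+(F'(\hu^*)-F'(\hu^l))e^{l+1}$ so as to move the difference of the two linearised operators onto the right. When the algebra settles, every coefficient on the right-hand side involves $\hu^*+\uinc$ rather than $\hu^l+\uinc$, producing a source $g^*_{l+1}$ built only from (i) quadratic-in-$e^l$ terms with $L^\infty$-coefficient $\hu^*+\uinc$, (ii) the cubic term $|e^l|^2 e^l$, and (iii) bilinear cross terms of the form $(\hu^*+\uinc)\cdot e^l\cdot e^{l+1}$, $|e^l|^2 e^{l+1}$ and $(e^l)^2\overline{e^{l+1}}$.

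Since $e^{l+1}\in H_0^1(\D)$, I will then apply the hypothesis \eqref{eq:stab:ws} with $g^*=g^*_{l+1}$ and $w^*=e^{l+1}$ to obtain $\He{e^{l+1}}\le C^*\|g^*_{l+1}\|_0$. Each term in $\|g^*_{l+1}\|_0$ is bounded by H\"older's inequality (pulling out $\Linf{\hu^*+\uinc}$) combined with the Nirenberg inequalities \eqref{ni}: the quadratic-in-$e^l$ terms are treated with the $L^4$-embedding $\|v\|_{L^4(\Om)}\le\cni k^{d/4-1}\He{v}$ and the cubic-in-$e^l$ terms with the $L^6$-embedding $\|v\|_{L^6(\Om)}\le\cni k^{d/3-1}\He{v}$. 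This yields an estimate of the form
\eqn{\|g^*_{l+1}\|_0 \ls \cni^2 k^{d/2}\vep\Linf{\hu^*+\uinc}\bgS{\He{e^l}^2+\He{e^l}\He{e^{l+1}}}+\cni^3 k^{d-1}\vep\bgS{\He{e^l}^3+\He{e^l}^2\He{e^{l+1}}}.}
Invoking $\He{e^l}\le\ta^*\le\cni^{-1}k^{1-d/2}\Linf{\hu^*+\uinc}$, each surplus factor of $\cni k^{d/2-1}\He{e^l}$ can be traded for $\Linf{\hu^*+\uinc}$, reducing the cubic-in-$e^l$ terms to the quadratic template. After multiplying by $C^*$ and recognising $N^*$, I arrive at $\He{e^{l+1}}\ls N^*\He{e^l}^2+N^*\He{e^l}\He{e^{l+1}}$.

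Finally, the hypothesis $\He{e^l}\le\ta^*\le(N^*)^{-1}$ lets me absorb the cross term $N^*\He{e^l}\He{e^{l+1}}$ into the left-hand side, yielding the quadratic bound \eqref{eq:quad}. The same estimate together with $N^*(\ta^*)^2\le\ta^*$ shows $\He{e^{l+1}}\le\ta^*$, so both smallness constraints defining $\ta^*$ propagate to step $l+1$, closing the induction with the hypothesis $\He{\hu^0-\hu^*}\le\ta^*$ as its base. The main obstacle I anticipate is exactly the algebraic step in the first paragraph: reducing the raw difference of the two cubic nonlinearities, whose linearisations sit at $\hu^l$ and $\hu^*$ respectively, to a form whose coefficients involve only $\hu^*+\uinc$. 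Without that reduction one would have to control $\Linf{\hu^l+\uinc}$, and hence $\Linf{e^l}$, requiring an auxiliary $L^\infty$ estimate along the lines of Lemma \ref{lem:linfap}; the two identities above are precisely what makes this detour unnecessary.
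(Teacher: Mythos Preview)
Your proposal is correct and follows essentially the same route as the paper: derive the error equation with the linearised operator at $\hu^*$ on the left by moving the difference $F'(\hu^*)-F'(\hu^l)$ to the right, apply the assumed stability \eqref{eq:stab:ws}, estimate the resulting source via H\"older and the Nirenberg inequalities \eqref{ni}, use $\He{e^l}\le\cni^{-1}k^{1-d/2}\Linf{\hu^*+\uinc}$ to reduce the cubic pieces, and absorb the cross term using $\He{e^l}\le(N^*)^{-1}$ before closing the induction. The only place to be careful is the absorption step, where your $\ls$ hides a constant that must be strictly less than one; the paper tracks this explicitly (obtaining a factor $\tfrac23$ from $10C^*\cni^2$ versus the $15C^*\cni^2$ in $N^*$), and you should do the same when you fill in the details.
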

\begin{proof}
Denote $e^l = \hu^l -\hu^*$. From \eqref{ip1} and \eqref{eq:PML}, it is easy to verify that
\eq{\label{NT1}&\; \hat L e^{l+1} - k^2 \vep \oneo\BgS{2\abs{\hu^*+\uinc+e^l}^2e^{l+1} +  \bgS{\hu^*+\uinc+e^l}^2\overline{e^{l+1}}}\notag \\
=&\; -k^2\vep\oneo \BgS{2\abs{e^l}^2\bgS{\hu^*+\uinc+e^l} + \bgS{e^l}^2\overline{\bgS{\hu^*+\uinc}}},
}
which can be rewritten as
\eqn{&\; \hat L e^{l+1} - k^2 \vep \oneo\BgS{2\abs{\hu^*+\uinc}^2e^{l+1} +  \bgS{\hu^*+\uinc}^2\overline{e^{l+1}}} \\
=&\; -k^2\vep\oneo \BgS{2\abs{e^l}^2\bgS{\hu^*+\uinc+e^l} + \bgS{e^l}^2\overline{\bgS{\hu^*+\uinc}}} \\
&\; +k^2\vep\oneo \BgS{2\abs{e^l}^2 e^{l+1} + 4e^{l+1}\Re\bgS{\overline{e^l}(\hu^*+\uinc)} + (e^l)^2\overline{e^{l+1}} + 2e^l(\hu^*+\uinc) \overline{e^{l+1}}}.
}
Applying  \eqref{eq:stab:ws} and \eqref{ni}, 
we have
\eqn{\He{e^{l+1}} \leq&\; C^* k^2 \vep \Big(3\Linf{\hu^*+\uinc} \norm{e^l}_{L^4(\Om_0)}^2 +  2\norm{e^l}_{L^6(\Om_0)}^3  \\
&\; + 6\Linf{\hu^*+\uinc}\norm{e^l}_{L^4(\Om_0)}\norm{e^{l+1}}_{L^4(\Om_0)}+ 3\norm{e^l}_{L^6(\Om_0)}^2 \norm{e^{l+1}}_{L^6(\Om_0)} \Big)\\
\leq&\; C^*  \cni^2  \big(6k^{\frac{d}{2}}\vep\Linf{\hu^*+\uinc}+4 \cni k^{d-1}\vep\He{e^l}\big) \big(\tfrac12\He{e^l}^2+\He{e^l}\He{e^{l+1}}\big).}
Supposing $\He{e^l} \leq \ta^* $, we get
\eqn{\He{e^{l+1}} &\leq 10C^*\cni^2  k^{\frac{d}{2}}\vep\Linf{\hu^*+\uinc} \big(\tfrac12\He{e^l}^2+\He{e^l}\He{e^{l+1}}\big)\\
&\leq\tfrac13 N^*\He{e^l}^2+\tfrac23 N^*\ta^*\He{e^{l+1}}\le\tfrac13N^*\He{e^l}^2+\tfrac23\He{e^{l+1}},}
which yields $\He{e^{l+1}} \leq N^* \He{e^l}^2$ and moreover $\He{e^{l+1}} \leq N^*(\ta^*)^2 \le \ta^*$. Then the proof of the theorem follows by induction. \end{proof}

\begin{remark}\label{rmk:quad-low}

{\rm (i)} From the proof of Theorem~\ref{thm:stabpml}, we can see that, under the conditions of Theorem~\ref{thm:stabpml}, the low-energy solution  $\hu^*=\hu$ satisfies the stability condition \eqref{eq:stab:ws} with $C^*\eqsim 1$. Therefore, $\{\hu^l\}_{l\geq 1}$, defined by \eqref{ip1}, converges quadratically to $\hu^*$, as long as $\hu^0$ is close enough to $\hu^*$. 

{\rm (ii)} If $\hu^*$ is with large but finite energy, the stability estimate \eqref{eq:stab:ws} should also be satisfied but we fail to prove it.  But by using the Fredholm alternative theorem \cite{gilbarg2001}, we can at least claim that the estimate \eqref{eq:stab:ws} holds for every real $k$ except possibly for a discrete set of values, 
and in this case, $\{\hu^l\}$ also converges quadratically to $\hu^*$ if the initial guess is sufficiently close to the exact solution.
\end{remark}

\subsection{Convergence of the nonlinear PML solution}
In this subsection, 
we prove that the solution to the nonlinear PML problem \eqref{eq:PML} 
converges to the solution to the NLH \eqref{eq:Helm}--\eqref{eq:Somm} exponentially in the domain 
$\Omega$, in terms of both the PML medium parameter $\sigma_0$ and thickness $L$. 

\subsubsection{A linear auxiliary problem for NLH}
In this subsection, we follow the analysis of the nonlinear PML problem in Section \ref{au-problem} to first introduce an auxiliary problem for NLH \eqref{eq:Helm}--\eqref{eq:Somm}. 
Then we give some stability results for the auxiliary problem 
and an exponential convergence estimate for its PML approximation. 

Denote by $L$ the linear Helmholtz operator, i.e., $Lw=-\Delta w - k^2 w$. 
Similarly to what we did in Section \ref{au-problem}, we start with an auxiliary linearized problem 
of the original NLH \eqref{eq:Helm}--\eqref{eq:Somm} through Newton's iteration:
for given $\phi\in L^\infty(\R^d)$ and $g\in L^2(\R^d)$ supported in $\Om$, find $w^\phi\in H_{\rm loc}^2(\R^d)$ such that
\eq{\label{eq:Helmap} L w^\phi - k^2\vep\oneo \bgS{2\abs{\phi+\uinc}^2 w^\phi + (\phi+\uinc)^2 \overline{w^\phi}} = g; \quad \abs{\partial_r w^\phi - \i k w^\phi} = o(r^{\frac{1-d}2}) \quad \text{as } r\to \infty.
}
By using the same techniques as those used in Lemmas \ref{lem:stabap}, \ref{lem:linfap} and the existing stability estimates for the linear Helmholtz equation (see e.g. \cite[Lemma 3.5]{melenk2010}), we can obtain the following results. The details are omitted. 
 
\begin{lemma}\label{lem:oap} Suppose $\supp g\subset\Om$. There exists a positive constant $\tilde\theta_0 \ls 1$ such that if
\eqn{ \max\{k\vep\Linf{\phi}^2,k\vep\Linf{\uinc}^2\}\leq\tilde\theta_0, 
}
then the solution $w^\phi$ to \eqref{eq:Helmap} satisfies
\[ k\Lt{w^\phi}{\Om} +\Ho{w^\phi}{\Om}+k^{-1}\Ht{w^\phi}{\Om} \ls \Lt{g}{\Om}\qaq \Linf{w^\phi}\ls k^{\frac{d-3}2}\Lt{g}{\Om}.
\]
\end{lemma}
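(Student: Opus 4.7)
The plan is to proceed in essentially the same manner as Lemmas~\ref{lem:stabap} and \ref{lem:linfap}, but replacing the PML stability tool with the free-space Helmholtz stability estimate. I would first rewrite \eqref{eq:Helmap} as a linear Helmholtz problem with modified right-hand side,
\eqn{
L w^\phi = g + k^2\vep\oneo\bgS{2\abs{\phi+\uinc}^2 w^\phi + (\phi+\uinc)^2\overline{w^\phi}} =: \tilde g,
}
where $\tilde g$ is still supported in $\Om$ because $\Omz \subset \Om$ and $\supp g \subset \Om$. Then I would invoke the wave-number-explicit resolvent estimate for the linear Helmholtz equation with Sommerfeld radiation condition (e.g. \cite[Lemma~3.5]{melenk2010}), which gives $k\Lt{w^\phi}{\Om}+\Ho{w^\phi}{\Om} \ls \Lt{\tilde g}{\Om}$.

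Next I would bound the nonlinear-type perturbation crudely by
\eqn{
\Lt{\tilde g}{\Om}\le \Lt{g}{\Om}+3k^2\vep\bgS{\Linf{\phi}^2+\Linf{\uinc}^2}\Lt{w^\phi}{\Omz}\le \Lt{g}{\Om}+6k\tilde\theta_0\Lt{w^\phi}{\Om},
}
so that by choosing $\tilde\theta_0$ small enough the $w^\phi$ term can be absorbed into the left-hand side, yielding $k\Lt{w^\phi}{\Om}+\Ho{w^\phi}{\Om} \ls \Lt{g}{\Om}$. The piecewise $H^2$ bound then follows from standard interior elliptic regularity applied to $-\Delta w^\phi = k^2 w^\phi + \tilde g - g$ on $\Om$, combined with the already derived $L^2$ and $H^1$ bounds (noting that the coefficient jump issues present in the PML case are absent here).

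For the $L^\infty$ estimate I would use the explicit outgoing Green's function representation in the whole space, namely
\eqn{
w^\phi(x)=\int_{\R^d} G(x,y)\tilde g(y)\,dy,\qquad G(x,y)=\tfrac{\i}{4}H_0^{(1)}(k\abs{x-y})\text{ if }d=2,\quad G(x,y)=\tfrac{e^{\i k\abs{x-y}}}{4\pi\abs{x-y}}\text{ if }d=3,
}
which is valid because $\tilde g$ is compactly supported and $w^\phi$ satisfies the radiation condition. The same Green's function bounds used in the proof of Lemma~\ref{lem:linfap} give $\int_\Om\abs{G(x,y)}^2 dy\ls k^{-1}$ for $d=2$ and $\ls 1$ for $d=3$, so Cauchy--Schwarz yields $\Linf{w^\phi}\ls k^{(d-3)/2}\Lt{\tilde g}{\Om}$, and combining with the already-proved bound on $\Lt{\tilde g}{\Om}$ gives the claimed $\Linf{w^\phi}\ls k^{(d-3)/2}\Lt{g}{\Om}$.

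The main obstacle, as in Lemma~\ref{lem:linfap}, is the $L^\infty$ bound, but here it is genuinely easier than in the PML case: no Fourier/Hankel decomposition is needed because we may work directly with the free-space Green's function, and there is no tail term $\zeta$ to estimate via the exponential decay of $H_\nu^{(1)}$ along complex rays. All the other steps are routine variants of what was done for the PML auxiliary problem, with the constant $\tilde\theta_0$ fixed at the end (exactly as $\theta_0$ was fixed in Lemma~\ref{lem:stabap}) to make the absorption argument valid.
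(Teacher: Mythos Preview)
Your proposal is correct and follows essentially the same approach that the paper itself indicates: the paper omits the proof and simply states that the result follows from the techniques of Lemmas~\ref{lem:stabap} and \ref{lem:linfap} combined with the Melenk--Sauter free-space stability estimate \cite[Lemma~3.5]{melenk2010}. Your observation that the $L^\infty$ step is genuinely easier here---a direct Green's function representation without the $v+w+\zeta$ Fourier/Hankel decomposition or the exponentially-decaying tail $\zeta$---is exactly the simplification one expects in the free-space setting (minor slip: the elliptic-regularity right-hand side should be $k^2 w^\phi + \tilde g$, not $k^2 w^\phi + \tilde g - g$, and the bound is on $H^2(\Om)$ rather than piecewise $H^2$ since no coefficient discontinuity is present).
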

Similarly to the discussion below Lemma \ref{lem:stabap}, the well-posedness of the auxiliary problem \eqref{eq:Helmap} follows from the Fredholm alternative theorem and the above stability estimate. We omit the details here.

Recalling the definition \eqref{ap1}, we see that $\hat w^\phi$ is the PML approximation of $w^\phi$ 
to \eqref{eq:Helmap}. In fact, we have  the following exponentially convergence result for $\hat w^\phi$. 
\begin{lemma}\label{lem:convap} Suppose $\supp g\subset\Om$, and the conditions of Lemma \ref{lem:stabl} 
are satisfied. 
There exists a positive constant $\hat\theta \ls 1$ such that if
\eqn{ \max\{k\vep\Linf{\phi}^2,k\vep\Linf{\uinc}^2\}\leq \hat\theta, 
} then
\eq{\label{eq:w-phi}
\He{w^\phi-\hat w^\phi}_\Om \ls k^5 e^{-2 k\siz L\big(1-\frac{R^2}{\hR^2+\siz^2L^2}\big)^{\frac12}}\Lt{g}{\Om}.
}
\end{lemma}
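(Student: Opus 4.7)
The argument follows the linear PML convergence analysis of Li--Wu~\cite{liwu2019}, made possible by the fact that both the source $g$ and the linearized scattering potential $k^2\vep\oneo\bgS{2|\phi+\uinc|^2\cdot+(\phi+\uinc)^2\overline{\,\cdot\,}}$ are supported strictly inside $\Omega_0\subset\subset\Omega$. Consequently, outside $\Omega$ the auxiliary solution $w^\phi$ satisfies the pure linear Helmholtz equation $Lw^\phi=0$ with Sommerfeld radiation condition, which is precisely the setting to which the Hankel-function decay of Lemma~\ref{lem:Bessel} applies. The first step is to define a PML extension $\tilde w\in H^1(\D)$ by setting $\tilde w:=w^\phi$ on $\Omega$ and, on $\hat\Omega$, by replacing $r$ by $\tr=r\be(r)$ in the Fourier/spherical-harmonic representation of $w^\phi$ in the exterior (this is the analytic continuation in the radial variable enabled by the Hankel-function form of the outgoing modes). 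The chain rule together with $Lw^\phi=0$ outside $\Omega$ implies that $\tilde w$ satisfies the same linearized PML equation as $\hat w^\phi$ with right-hand side $g$, but with nonzero Dirichlet trace $\tilde w|_{\hGamma}=w^\phi(\hR\be(\hR),\cdot)$, where $\hR\be(\hR)=\hR+\i k\siz L/k$.

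Applying Lemma~\ref{lem:Bessel} mode-by-mode at $z=k\hR\be(\hR)=k\hR+\i k\siz L$ and $x=kR$, each mode of $\tilde w|_{\hGamma}$ picks up the decay factor $e^{-k\siz L(1-R^2/(\hR^2+\siz^2L^2))^{1/2}}$ relative to its value on $\Gamma$; the factor $2$ in the exponent of the claimed bound then arises because the modal coefficients of the exterior representation of $w^\phi$ themselves already contain a Hankel function (from the jump at $\Gamma$), so evaluating the product at $\hR\be(\hR)$ yields the squared decay. Combined with the $L^\infty$ and piecewise $H^2$ bounds on $w^\phi$ from Lemma~\ref{lem:oap} and elementary asymptotics for $H_\nu^{(1)}$ at large argument, this produces exponentially decaying estimates for $\tilde w$ and $\partial_\nu\tilde w$ on $\hGamma$ times a polynomial prefactor in $k$.

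The error $e:=\tilde w-\hat w^\phi$ now solves the homogeneous linearized PML system on $\D$ with Dirichlet data $e|_{\hGamma}=\tilde w|_{\hGamma}$. To put this into the framework of Lemma~\ref{lem:stabap}, I would lift the boundary data by an auxiliary function $E\in H^1(\D)\cap H^2(\hOm)$ with $E|_{\hGamma}=\tilde w|_{\hGamma}$ and $E\equiv 0$ on a neighbourhood of $\overline\Omega$; this is possible because $\Omega_0\subset\subset\Omega$, so $E$ never interacts with the nonlinear term. Then $e-E\in H_0^1(\D)$ solves \eqref{ap1} with right-hand side $-\hat L E\in L^2(\D)$, and Lemma~\ref{lem:stabap} (valid after choosing $\hat\theta\le\theta_0$) gives $\He{e-E}\ls\Vert\hat LE\Vert_0\ls k^2\Ht{E}{\hOm}$. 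Since $E\equiv 0$ on $\Omega$, one has $\He{w^\phi-\hat w^\phi}_\Omega=\He{e}_\Omega=\He{e-E}_\Omega$, and chaining with the boundary-decay estimate produces the stated bound. The main obstacle lies in the careful bookkeeping of all the $k$-powers introduced by the Hankel asymptotics, the harmonic truncation in $d=3$, the trace inverse estimates on $\hGamma$, and the action of $\hat L$ on the lifting $E$, which together must accumulate precisely to the factor $k^5$; verifying that $\tilde w$ belongs to $H^2(\OchO)$ with quantitative $k$-dependent bounds is routine from Lemma~\ref{lem:oap} and the smoothness of $\be$.
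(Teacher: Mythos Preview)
Your approach is genuinely different from the paper's, and it contains a real gap in the exponential rate.

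\textbf{What the paper does.} The paper never re-derives PML convergence for the linearized operator. Instead it writes $Lw^\phi=g^\phi$ and $\hat L\hat w^\phi=\hat g^\phi$ with $g^\phi,\hat g^\phi$ equal to $g$ plus the respective frozen-potential terms, introduces the correction $e^\phi\in H_0^1(\D)$ solving $\hat Le^\phi=g^\phi-\hat g^\phi$, and observes that $\hat w^\phi+e^\phi$ is then the PML approximation of the \emph{purely linear} Helmholtz problem $Lw^\phi=g^\phi$. This lets it invoke \cite[Theorem~3.7]{liwu2019} verbatim to bound $\He{w^\phi-(\hat w^\phi+e^\phi)}_\Omega$ by the claimed $k^5e^{-2k\siz L(\cdots)^{1/2}}$ times $\|w^\phi\|_{H^{1/2}(\Gamma)}\ls\|g\|_{0,\Omega}$. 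The extra piece $e^\phi$ satisfies, by Lemma~\ref{lem:stabl}, $\He{e^\phi}\ls\hat\theta\,\He{w^\phi-\hat w^\phi}_{\Omega_0}$ and is absorbed into the left-hand side for $\hat\theta$ small. The whole argument is a four-line perturbation around the already-proved linear theorem.

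\textbf{Where your argument breaks.} In your scheme the only source of exponential decay is the evaluation $\tilde w|_{\hGamma}=\sum_n a_n H_n^{(1)}(k\hR\be(\hR))e^{\i n\theta}$, which by Lemma~\ref{lem:Bessel} carries \emph{one} factor $e^{-k\siz L(1-R^2/(\hR^2+\siz^2L^2))^{1/2}}$. Your claim that a second factor appears because ``the modal coefficients themselves already contain a Hankel function'' is not correct: the outgoing expansion coefficients $a_n$ are fixed complex numbers determined by the data on $\Gamma$, they do not carry any $H_n^{(1)}$ at a complex argument. After lifting by $E$ and applying Lemma~\ref{lem:stabap}, the stability constant is $O(1)$, so the final bound inherits only the single decay factor from $\|\hat LE\|_0$. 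The doubling in \cite[Theorem~3.7]{liwu2019} actually comes from the ratio $H_n^{(1)}(k\thR)/J_n(k\thR)$ appearing in the mode-by-mode error representation (the $\zeta$-term in the proof of Lemma~\ref{lem:linfap}): since $J_n(z)\sim\tfrac12 H_n^{(2)}(z)$ grows like $e^{+\Im z}$ for $z$ in the upper half-plane, this ratio decays like $e^{-2\Im(k\thR)}$. That mechanism is absent from the lift-and-stabilize route you propose, so as written your argument would deliver only $e^{-k\siz L(\cdots)^{1/2}}$ and hence does not prove the stated lemma. To recover the factor~2 you would have to abandon the black-box use of Lemma~\ref{lem:stabap} and carry out the modal error analysis explicitly---at which point you are reproving \cite[Theorem~3.7]{liwu2019}, whereas the paper simply cites it.
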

\begin{proof}
We first see that $\hat w^\phi$ and $w^\phi$ satisfy 
$L w^\phi = g^\phi$ and $\hat L \hat w^\phi = \hat g^\phi$, respectively, 
with $g^\phi$ and $\hat g^\phi$ given by 
\eqn{g^\phi &= g + k^2\vep\oneo \bgS{2\abs{\phi+\uinc}^2 w^\phi + (\phi+\uinc)^2 \overline{w^\phi}}, \\
\hat g^\phi &= g + k^2\vep\oneo \bgS{2\abs{\phi+\uinc}^2 \hat w^\phi + (\phi+\uinc)^2 \overline{\hat w^\phi}},}
Let $e^\phi \in H^1_0(\D)$ be the solution to $\hat L e^\phi = g^\phi - \hat g^\phi$. 
From Lemma~\ref{lem:stabl}, we get
\begin{equation}\label{eq:e-phi} 
\begin{aligned}
\He{e^\phi} &\ls \norm{g^\phi - \hat g^\phi}_{0} \ls k^2 \vep \bgS{\Linf{\phi}^2+\Linf{\uinc}^2} \norm{w^\phi - \hat w^\phi}_{0,\Om_0} 
\ls \hat\theta \He{w^\phi - \hat w^\phi}_{\Om_0}.
\end{aligned}
\end{equation}

On the other hand, it is easy to see that $\hat L(\hat w^\phi + e^\phi) = g^\phi$, which implies that $(\hat w^\phi + e^\phi)$ is the PML approximation of the linear Helmholtz problem $L w^\phi = g^\phi$. Let $\hat\theta \le \tilde\theta_0$ so that Lemma~\ref{lem:oap} holds. Applying the existing convergence result in \cite[Theorem 3.7]{liwu2019} and the trace inequality, we have
\eqn{\He{w^\phi - (\hat w^\phi + e^\phi)}_\Om \ls k^5 e^{-2 k\siz L\big(1-\frac{R^2}{\hR^2+\siz^2L^2}\big)^{\frac12}} \norm{w^\phi}_{H^{\frac12}(\partial\Om)} 
\ls  k^5 e^{-2 k\siz L\big(1-\frac{R^2}{\hR^2+\siz^2L^2}\big)^{\frac12}} \norm{g}_{0,\Om},
}
which together with \eqref{eq:e-phi} gives \eqref{eq:w-phi}, as long as $\hat\theta$ is small enough.
\end{proof}

\subsubsection{Stability estimates of the solutions to NLH}
Similarly to what we did for the nonlinear PML problem \eqref{eq:PML}, 
we study the well-posedness of the NLH \eqref{eq:Helm}--\eqref{eq:Somm}, 
by an Newton's iterative process: 
For a given $u^0\in H^2(\R^d)$ satisfying the radiation condition \eqref{eq:Somm}, 
find $u^{l+1}\in H^2(\R^d)$ satisfying the condition \eqref{eq:Somm} for $l=0,1,2,\cdots$, such that 
\begin{equation}\label{eq:Helmip}
\begin{aligned}
 L u^{l+1} &- k^2 \vep \oneo \BgS{2\abs{u^{l}+\uinc}^2 u^{l+1} + \big(u^{l}+\uinc\big)^2 \overline{u^{l+1}}} \\
= f &- k^2 \vep \oneo \BgS{2\abs{u^{l} + \uinc}^2 u^{l}-\big(u^{l}+\uinc\big)^2\overline{\uinc}}.
\end{aligned}
\end{equation}
By following the proofs of Lemma~\ref{lem:pmlip} and Theorem~\ref{thm:stabpml} and applying Lemma \ref{lem:oap}, we can obtain the stability estimates of the sequence $\{u^l\}_{l\geq 1}$ and the solution $u$ to the NLH \eqref{eq:Helm}--\eqref{eq:Somm} as stated below. 
\begin{lemma}\label{lem:oip}
There exists a positive constant $\tilde\theta_1 \ls 1$ such that if 
\begin{align*}
&k\norm{u^0}_{0,\Omega_0}\ls\Mf,\quad \Linf{u^0}\ls k^{\frac{d-3}2}\Mf,\quad 
\max\big\{k^{d-2}\vep\Mf^2,~k\vep\Linf{\uinc}^2 \big\} \leq\tilde\theta_1,
\end{align*}
then the following estimates hold for $l=1,2,\cdots$:
\[
\He{u^l}_\Om+k^{-1}\Ht{u^l}{\Om} \ls \Mf,\qaq \Linf{u^l}\ls k^{\frac{d-3}2}\Mf. 
\]
Moreover, there exists a positive constant $\tilde\theta_2 \ls 1$ such that if 
\begin{equation} 
\label{eq:cond2}
\max\big\{k^{d-2}\vep\Mf^2,~k\vep\Linf{\uinc}^2 \big\} \leq\tilde\theta_2,
\end{equation}
then the NLH system \eqref{eq:Helm}--\eqref{eq:Somm} attains a unique solution $u=\lim_{l\to\infty}u^l$ satisfying:
\begin{equation}\label{eq:NLH_well-posedness}
\He{u}_{\Om}+k^{-1}\Ht{u}{\Om}+k^{\frac{3-d}2}\Linf{u} \ls \Mf.
\end{equation}
\end{lemma}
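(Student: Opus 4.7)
The plan is to mimic the arguments used for the PML problem in Lemma~\ref{lem:pmlip} and Theorem~\ref{thm:stabpml}, replacing the PML stability, regularity and $L^\infty$ estimates (Lemmas~\ref{lem:stabap} and~\ref{lem:linfap}) by their free-space counterpart, namely Lemma~\ref{lem:oap}. Since $f$ is compactly supported in $\Om$ and the nonlinear term is supported in $\Om_0\subset\subset\Om$, the right-hand side of every Newton iterate of \eqref{eq:Helmip} is supported in $\Om$, which is precisely the setting of Lemma~\ref{lem:oap}.

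\textbf{Step 1: Uniform stability of the Newton iterates.} Rewriting \eqref{eq:Helmip} with
\[
\hat f^l := f - k^2\vep\oneo\bgS{2|u^l+\uinc|^2 u^l - (u^l+\uinc)^2\overline{\uinc}},
\]
the $(l{+}1)$-th step becomes an auxiliary linearized problem of type \eqref{eq:Helmap} with $\phi=u^l$ and $g=\hat f^l$. I would then reproduce the induction in the proof of Lemma~\ref{lem:pmlip} verbatim. Assuming the bounds
\[
k\norm{u^l}_{0,\Om_0}\le \widetilde C_\mL \Mf,\qquad \Linf{u^l}\le \widetilde C_\infty k^{(d-3)/2}\Mf
\]
at level $l$, the explicit expansion of $\hat f^l$ together with the presence of $k^2\vep\norm{\uinc}_{L^6(\Om_0)}^3$ in the definition \eqref{Mf} of $\Mf$ yields $\norm{\hat f^l}_{0,\Om}\le 2\Mf$ and $k\vep\Linf{u^l}^2\le \tilde\theta_0$, provided $\tilde\theta_1$ is chosen small enough in terms of $\widetilde C_\mL, \widetilde C_\infty$ and $\tilde\theta_0$. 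Lemma~\ref{lem:oap} then closes the induction and delivers the claimed bounds on $u^{l+1}$.

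\textbf{Step 2: Existence and uniqueness as the limit of the Newton sequence.} Following the proof of Theorem~\ref{thm:stabpml}, set $v^l = u^{l+1}-u^l$. Subtracting two consecutive Newton steps, $v^l$ satisfies a linearized problem of the form \eqref{eq:Helmap} with $\phi=u^l$ and right-hand side
\[
k^2\vep\oneo\BgS{\bgS{(u^l+\uinc)^2-(u^{l-1}+\uinc)^2}\overline{(u^l+\uinc)} - 2|u^{l-1}+\uinc|^2 v^{l-1}}.
\]
Applying Lemma~\ref{lem:oap} together with the uniform $L^\infty$ bounds from Step~1 and the smallness hypothesis \eqref{eq:cond2}, one obtains
\[
\He{v^l}_\Om \ls \bgS{k^{d-2}\vep\Mf^2+k\vep\Linf{\uinc}^2}\,k\Lt{v^{l-1}}{\Om_0} \le \tfrac12 \He{v^{l-1}}_\Om,
\]
for $\tilde\theta_2$ small enough, and analogously in the $H^2(\Om)$-seminorm thanks to the regularity part of Lemma~\ref{lem:oap}. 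Hence $\{u^l\}$ is Cauchy in $H^2(\Om)$, and its limit $u$ extends by \eqref{eq:Helmap} to an element of $H^2_{\mathrm{loc}}(\R^d)$ satisfying the Sommerfeld radiation condition \eqref{eq:Somm} and the NLH \eqref{eq:Helm}. The stability estimate \eqref{eq:NLH_well-posedness} is inherited from the uniform bounds on $u^l$. Uniqueness among solutions satisfying \eqref{eq:NLH_well-posedness} follows from exactly the same contraction argument applied to the difference $v=u-w$ of two candidate solutions, as in the end of the proof of Theorem~\ref{thm:stabpml}.

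\textbf{Main obstacle.} The only non-cosmetic point is the propagation of the $L^\infty$ bound through the induction in Step~1: one needs the sharp $k^{(d-3)/2}$ scaling of the pointwise estimate in Lemma~\ref{lem:oap} and the careful choice of $\Mf$ in \eqref{Mf}, so that the cubic contribution $k^2\vep|\uinc|^2\uinc$ arising in $\hat f^l$ can be absorbed on the right-hand side. Once this book-keeping is handled, everything else is a direct transcription of the corresponding PML arguments, with $\hat L$ replaced by $L$, the energy norm $\He{\cdot}$ replaced by its $\Om$-localized version, and the piecewise-$H^2$ seminorm on $\OchO$ replaced by the ordinary $H^2$-seminorm on $\Om$.
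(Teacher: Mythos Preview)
Your proposal is correct and follows exactly the route indicated by the paper, which simply states that the result is obtained ``by following the proofs of Lemma~\ref{lem:pmlip} and Theorem~\ref{thm:stabpml} and applying Lemma~\ref{lem:oap}'' without spelling out any details. Your Step~1 reproduces the induction of Lemma~\ref{lem:pmlip} and your Step~2 the contraction argument of Theorem~\ref{thm:stabpml}, each time with Lemma~\ref{lem:oap} substituted for Lemmas~\ref{lem:stabap}--\ref{lem:linfap}, which is precisely the paper's intended approach.
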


\begin{remark} 
Similarly to Theorem \ref{thm:quad}, we can prove a quadratic convergence result for \eqref{eq:Helmip}. The details are omitted.
\end{remark}

\subsubsection{Convergence estimates}
Now we turn to the approximation error estimates between the nonlinear PML problem \eqref{eq:PML} and its original NLH \eqref{eq:Helm}--\eqref{eq:Somm}. 
\begin{theorem}\label{thm:conver}
Let $u$ and $\hu$ be the solutions to \eqref{eq:Helm}--\eqref{eq:Somm} and \eqref{eq:PML}, 
respectively. Then under the conditions of Lemma \ref{lem:stabl}, 
there exists a positive constant $\tilde\theta\ls 1$ such that 
the following estimate holds
\begin{equation} \label{eq:con}
\He{u-\hu}_\Om \ls k^5 e^{-2 k\siz L\big(1-\frac{R^2}{\hR^2+\siz^2L^2}\big)^{1/2}}\Mf, 
\end{equation}
if 
\begin{equation}
\label{eq:cond3}
\max\big\{k^{d-2}\vep\Mf^2,~k\vep\Linf{\uinc}^2 \big\} \leq\tilde\theta\,.
\end{equation}
\end{theorem}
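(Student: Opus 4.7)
The plan is to realize $u$ and $\hu$ as limits of two Newton iteration sequences sharing a common initial guess, and to propagate the linear PML convergence estimate of Lemma~\ref{lem:convap} through these iterations by induction. Set $u^0=\hu^0=0$ and let $\{u^l\}$ and $\{\hu^l\}$ be the corresponding iterates defined by \eqref{eq:Helmip} and \eqref{ip1}. Under \eqref{eq:cond3}, Lemmas~\ref{lem:oip} and~\ref{lem:pmlip} give uniform-in-$l$ bounds for both sequences in the energy norm and in $L^\infty(\Omz)$, and a bookkeeping computation as in \eqref{tfscMf} shows that the right-hand sides $\hat f^l$ and $\tilde f^l$ of the two Newton iterations both satisfy $\Lt{\hat f^l}{\Om}+\Lt{\tilde f^l}{\Om}\ls\Mf$.

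The crucial device is an intermediate function $\bar u^{l+1}\in H_{\rm loc}^2(\R^d)$, defined as the solution of the linear auxiliary problem \eqref{eq:Helmap} (with Sommerfeld radiation) with $\phi=\hu^l$ and $g=\tilde f^l$, i.e.\ the NLH analogue of \eqref{ip1} using the same previous iterate $\hu^l$. Then $\bar u^{l+1}$ and $\hu^{l+1}$ fit exactly the setup of Lemma~\ref{lem:convap} with $\phi=\hu^l$, giving, with $E:=k^5 e^{-2 k\siz L(1-R^2/(\hR^2+\siz^2L^2))^{1/2}}$,
\[ \He{\bar u^{l+1}-\hu^{l+1}}_\Om \ls E\,\Lt{\tilde f^l}{\Om} \ls E\,\Mf. \]
On the other hand, $u^{l+1}$ and $\bar u^{l+1}$ both satisfy linear Helmholtz problems in $\R^d$ with Sommerfeld radiation; subtracting the equations and using the algebraic identities $|a|^2-|b|^2=(a-b)\bar a+b\,\overline{a-b}$ and $a^2-b^2=(a-b)(a+b)$, the difference $u^{l+1}-\bar u^{l+1}$ is seen to solve an auxiliary linearized NLH of the form \eqref{eq:Helmap} with $\phi=\hu^l$ and right-hand side controlled by $k^2\vep\oneo$ times quadratic expressions in $\hu^l+\uinc$, $u^l+\uinc$, $u^{l+1}$, $\uinc$ multiplied by $u^l-\hu^l$. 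Combining Lemma~\ref{lem:oap} with the uniform $L^\infty(\Omz)$ bounds from Lemmas~\ref{lem:oip} and~\ref{lem:pmlip}, a standard AM--GM splitting, and the Nirenberg embedding \eqref{ni} then yields
\[ \He{u^{l+1}-\bar u^{l+1}}_\Om \ls \bgS{k^{d-2}\vep\Mf^2+k\vep\Linf{\uinc}^2}\He{u^l-\hu^l}_\Om. \]

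Combining the two bounds and shrinking $\tilde\theta$ in \eqref{eq:cond3} so that the prefactor above is $\le\tfrac12$ produces the contractive recursion $\He{u^{l+1}-\hu^{l+1}}_\Om \le \tfrac12\He{u^l-\hu^l}_\Om + CE\,\Mf$. Since $u^0-\hu^0=0$, induction gives $\He{u^l-\hu^l}_\Om \ls E\,\Mf$ uniformly in $l$, and passing to the limit $l\to\infty$ via Lemma~\ref{lem:oip} and Theorem~\ref{thm:stabpml} delivers \eqref{eq:con}. The main obstacle will be the second step: despite the different coefficients and different right-hand sides of the two Newton iterations, one must rearrange the cubic nonlinearity algebraically so that $u^{l+1}-\bar u^{l+1}$ is recognized as the exact solution of an auxiliary problem of the form \eqref{eq:Helmap} linearized at $\hu^l$, and then absorb $\He{u^{l+1}-\bar u^{l+1}}_\Om$ on the left using the small prefactor from \eqref{eq:cond3}; without this clean linearization the induction would not close.
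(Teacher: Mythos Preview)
Your proposal is correct and follows the same overall strategy as the paper: realize $u$ and $\hu$ as limits of Newton sequences, introduce an intermediate iterate so that one half of the splitting matches the linear PML convergence estimate of Lemma~\ref{lem:convap}, and show the other half contracts by a factor controlled by \eqref{eq:cond3}. The one structural difference is the \emph{side} on which you place the intermediate. The paper defines $\cu^{l+1}\in H_0^1(\D)$ as the truncated PML problem linearized at $u^l$ with the NLH right-hand side, so that $u^{l+1}-\cu^{l+1}$ is precisely the pair $(w^\phi,\hat w^\phi)$ of Lemma~\ref{lem:convap} with $\phi=u^l$, while $\eta^l:=\cu^l-\hu^l$ is bounded via Lemma~\ref{lem:stabap}. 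You instead take $\bar u^{l+1}$ as the NLH auxiliary problem linearized at $\hu^l$ with the PML right-hand side, so that $\bar u^{l+1}-\hu^{l+1}$ is the pair of Lemma~\ref{lem:convap} with $\phi=\hu^l$, and $u^{l+1}-\bar u^{l+1}$ is handled by Lemma~\ref{lem:oap}. The two constructions are mirror images; both close the induction the same way. Your choice $u^0=\hu^0=0$ (which trivially satisfies the hypotheses of Lemmas~\ref{lem:pmlip} and~\ref{lem:oip}) makes the recursion slightly cleaner than the paper's, which tracks an extra $2^{-l}\He{u^0-\hu^0}_\Om$ term. One minor remark: once you rearrange so that $u^{l+1}-\bar u^{l+1}$ solves \eqref{eq:Helmap} with $\phi=\hu^l$, the right-hand side involves only $u^l-\hu^l$ and quantities with known $L^\infty(\Omz)$ bounds (including $u^{l+1}$ via Lemma~\ref{lem:oip}); there is nothing to ``absorb on the left,'' and the Nirenberg inequality~\eqref{ni} is not needed here---the $L^\infty(\Omz)$ bounds alone suffice, exactly as in the paper's argument.
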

\begin{proof} Suppose that $\tilde\theta\leq\min \{\theta_2,\tilde\theta_2\}$ where $\theta_2$ and $\tilde\theta_2$  are from Theorem~\ref{thm:stabpml} and Lemma~\ref{lem:oip}, respectively.
For simplicity, we denote by
\[ \EPML:=k^5 e^{-2 k\siz L\big(1-\frac{R^2}{\hR^2+\siz^2L^2}\big)^{1/2}}.
\]
Since $u$ and $\hu$ are the limits of $\{u^l\}_{l\ge 1}$ in Lemma \ref{lem:oip} and $\{\hu^l\}_{l\ge 1}$ in Lemma \ref{lem:pmlip}, respectively, it suffices to estimate the error $u^l-\hu^l$. 

Define $\cu^0=\hu^0$ and let $\cu^{l+1}\in H_0^1(\D)$ for $l=0,1,2,\cdots$ solve 
\eqn{ \hat L\cu^{l+1} &- k^2 \vep \oneo \BgS{2\abs{u^{l}+\uinc}^2 \cu^{l+1} + \big(u^{l}+\uinc\big)^2 \overline{\cu^{l+1}}} \\
= f &- k^2 \vep \oneo \BgS{2\abs{u^{l} + \uinc}^2 u^{l}-\big(u^{l}+\uinc\big)^2\overline{\uinc}}.
}
Clearly, $u^l-\hu^l=(u^l-\cu^l)+(\cu^l-\hu^l)$. From  Lemma \ref{lem:convap} with $\phi=u^l$, Lemma \ref{lem:oip}, and \eqref{eq:cond3}, and following the procedure in \eqref{tfscMf}, we conclude that
\begin{equation}\label{eq:pf1}
\He{u^{l+1}-\cu^{l+1}}_\Om \ls \EPML \norm{f^l}_{0,\Om} 
\ls  \EPML \Mf, \quad l\geq 0,
\end{equation}
where $f^l :=f - k^2 \vep \oneo \big( 2|u^{l} + \uinc|^2 u^{l}-(u^{l}+\uinc)^2\overline{\uinc} \big)$.
We still need to estimate $\eta^l:=\cu^l-\hu^l$. It is easy to verify that the sequence $\{\eta^l\}_{l\geq 0}$ satisfies the following recursive relation: 
\eqn{&\; \hat L \eta^{l+1} - k^2 \vep \oneo \BgS{2\abs{u^l+\uinc}^2 \eta^{l+1} + \big(u^l+\uinc\big)^2 \overline{\eta^{l+1}}} \\
=&\; k^2\vep\oneo\Big( 2\bgS{\abs{u^l+\uinc}^2-\abs{\hu^l+\uinc}^2}\bgS{\hu^{l+1}- u^l} - 2\abs{\hu^l+\uinc}^2\bgS{u^l - \hu^l} \\
&\; +\bgS{\bgS{u^l+\uinc}^2 - \bgS{\hu^l+\uinc}^2} \overline{\bgS{\hu^{l+1} + \uinc}} \Big).
}
Then we can 
use Lemmas \ref{lem:stabap}, \ref{lem:pmlip} and \ref{lem:oip} to get 
\begin{align*}
\He{\eta^{l+1}}_{\Om}\ls\He{\eta^{l+1}} &\ls k^2\vep \Sp{\Linf{\uinc}^2 + \Linf{u^l}^2 +\Linf{\hu^l}^2 + \Linf{\hu^{l+1}}^2} \norm{u^l-\hu^l}_{0,\Om_0} \\
 &\ls  \tilde\theta \big(\He{u^l-\cu^l}_\Om +\He{\eta^l}_\Om\big).
\end{align*}
Now letting $\tilde\theta$ be sufficiently small such that
\[ \He{\eta^{l+1}}_\Om \leq \tfrac12\He{u^l-\cu^l}_\Om +\tfrac12\He{\eta^l}_\Om, 
\]
then by induction, using \eqref{eq:pf1} and noting that $\eta^0=0$, we get 
\[ \He{\eta^l}_\Om \ls \sum_{j=0}^{l-1} 2^{j-l}\He{u^{j}-\cu^{j}}_\Om \ls \EPML\Mf +2^{-l}\He{u^{0}-\cu^{0}}_\Om,
\]
which together with \eqref{eq:pf1} implies that
\[ \He{u^l-\hu^l}_\Om \ls \EPML\Mf +2^{-l}\He{u^{0}-\hu^{0}}_\Om.
\]
Taking $l\to\infty$ allows us to conclude the proof.
\end{proof}

\section{FEM and its error estimates}\label{s:4}
In this section, we introduce the FEM for the nonlinear PML problem \eqref{eq:PML} 
and prove the stability and preasymptotic error estimates for the finite element (FE) solution.

\subsection{FEM and the elliptic projection}
Let $\Th$ be a curvilinear triangulation of $\D$. For any $K\in\Th$, we define $h_K:=\diam(K)$ and $h:=\max_{K\in \Th} h_K$. Assume that $h_K\eqsim h$ 
for any $K \in\Th$. Additionally, we denote by $\widehat{K}$ the reference element and $F_K$ the element map from $\widehat{K}$ to $K\in \Th$ (see \cite[Assumption~5.2]{melenk2010}).
For simplicity, we assume that the triangulation $\Th$ fits the interfaces $\pa\Om_0$ and $\Gamma$, that is, $\pa\Om_0$ and $\Gamma$ do not pass through the interior of any element $K\in\Th$. 

Let $V_h$ be the linear finite element approximation space 
\begin{equation*}
V_h:=\{v_h\in H_0^1(\D):v_h|_K\circ F_K \in \mathcal{P}_1(\widehat{K})\;\;\;\forall K\in\Th\},
\end{equation*}
where $\mathcal{P}_1(\widehat{K})$ denotes the set of all first order polynomials on $\widehat{K}$. 
Recalling $\anl$ defined in \eqref{a1}, 
then the FEM for the nonlinear PML problem \eqref{varnltpml} reads as: find $u_h \in V_h$ such that
\begin{equation}\label{eq:FEM}
\anl(u_h,v_h) = \inOm{f}{v_h} \quad\forall v_h\in V_h.
\end{equation}

For further analysis, we let $I_h$ denote the standard finite element  interpolation operator onto $V_h$ (see, e.g., \cite[\S 3.3]{brenner2008}).
Moreover, we shall need two elliptic projections $P_h,\,P_h^* : H_0^1(\D)\mapsto V_h$ defined by
\begin{equation}
(A\na v_h,\na P_hw)=(A\na v_h,\na w),\quad (A\na P_h^* w, \na v_h)=(A\na w,\na v_h) \quad \forall v_h\in V_h,\, w\in H_0^1(\D). \label{eq:Ph}
\end{equation}
Noting that $A$ is symmetric, it is easy to verify that $P_h^*w=\overline{P_h\overline{w}}$.
By imitating the analysis for elliptic projection in \cite[etc.]{zhuwu2013,wu2013,duwu2015,liwu2019} and using the interpolation estimates in \cite[etc.]{brenner2008,lenoir1986}, we have the following error estimates for all $w\in H_0^1(\D)\cap H^2(\OchO)$:
\begin{equation}\label{eq:errPh}
\LtD{w-P_h w}+ h\abs{w-P_h w}_1\ls h^2 \Ht{w}{\OchO},\quad \LtD{w-P_h^* w}+ h\abs{w-P_h^* w}_1\ls h^2 \Ht{w}{\OchO}. 
\end{equation}

\subsection{A discrete auxiliary problem of FEM}
Similarly to the analysis of the continuous nonlinear PML problem \eqref{eq:PML}, 
we introduce the FEM for the linear auxiliary problem \eqref{ap1}: 
find $\hat w^\phi_h\in V_h$ such that
\begin{equation}\label{eq:dvap}
a^\phi (\hat w^\phi_h,v_h)
= (g,v_h)\quad \forall v_h\in V_h, 
\end{equation}
where $a^\phi$ is defined by
\begin{equation}\label{eq:aphi}
  a^\phi (u,v) := a(u,v) - k^2\vep \bgS{2\abs{\phi+\uinc}^2 u + (\phi+\uinc)^2 \overline{u}, v}_{\Om_0}.
\end{equation}
Note that, the variational formulation of the linear auxiliary problem \eqref{ap1} reads as: find $\hat w^\phi\in H_0^1(\D)$ such that
\begin{equation}\label{eq:vap}
a^\phi(\hat w^\phi,v) = (g,v)\quad \forall v\in H_0^1(\D).
\end{equation}

Next we give the error estimates between the auxiliary problems \eqref{eq:dvap} and \eqref{eq:vap}. 

\begin{lemma}\label{lem:errap}
Let the conditions of Lemma \ref{lem:stabl} be satisfied. There exist two positive constants $\theta_3\ls 1$ and $C_0$ such that if $k^3h^2\leq C_0$ and 
\eqn{\max\{k\vep\Linf{\phi}^2,\, k\vep\Linf{\uinc}^2\}\leq\theta_3,}
then the following error estimates hold:
\begin{equation} \label{eq:errap}
\big\Vert{\hskip -1pt}\big\vert \hat w^\phi-\hat w^\phi_h \big\vert{\hskip -1pt}\big\Vert \ls (kh+k^3h^2)\LtD{g}\qaq \big\Vert \hat w^\phi-\hat w^\phi_h\big\Vert_0 \ls k^2h^2\LtD{g}. 
\end{equation}
\end{lemma}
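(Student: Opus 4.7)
The plan is to establish existence/uniqueness of $\hat w^\phi_h$ and the two estimates in \eqref{eq:errap} simultaneously via an Aubin--Nitsche duality argument combined with the splitting induced by the elliptic projection $P_h$ of \eqref{eq:Ph}. The nonlinear perturbation in $a^\phi$ is handled as a small lower-order term thanks to the smallness hypothesis on $k\vep\Linf{\phi}^2$ and $k\vep\Linf{\uinc}^2$, while the PML indefiniteness is controlled through the continuous stability estimate \eqref{eq:stabap} applied to a dual auxiliary problem under the pollution-type condition $k^3 h^2 \leq C_0$. Since \eqref{eq:dvap} is a finite-dimensional linear system, existence of $\hat w^\phi_h$ follows from uniqueness, and the latter will fall out of the same error argument applied with $g = 0$.

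\textbf{Step 1 ($L^2$-estimate via duality).} Assuming $\hat w^\phi_h$ exists, I write $e := \hat w^\phi - \hat w^\phi_h = \eta + \xi$ with $\eta := \hat w^\phi - P_h \hat w^\phi$ and $\xi := P_h\hat w^\phi - \hat w^\phi_h \in V_h$, so that $a^\phi(e, v_h) = 0$ for all $v_h \in V_h$. I introduce the adjoint auxiliary problem: find $z \in H_0^1(\D)$ with $a^\phi(v, z) = (v, e)$ for all $v \in H_0^1(\D)$. Since the adjoint bilinear form has the same structure as $a^\phi$ (up to complex conjugation in the Kerr-type perturbation), the proofs of Lemmas \ref{lem:stabl} and \ref{lem:stabap} transfer to yield $\He{z} + k^{-1}\sHt{z}{\OchO} \ls \LtD{e}$. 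By Galerkin orthogonality,
\[ \LtD{e}^2 = a^\phi(e, z - P_h z) = (A\na e, \na(z - P_h z)) - k^2(Be, z - P_h z) - k^2\vep\bgS{2\abs{\phi+\uinc}^2 e + (\phi+\uinc)^2 \overline{e},\, z - P_h z}_{\Omz}. \]
The first term reduces to $(A\na \eta, \na(z - P_h z))$ because $(A\na \xi, \na(z - P_h z)) = 0$ for $\xi \in V_h$ by the defining property \eqref{eq:Ph} of $P_h$, and is bounded by $\sHo{\eta}{\D}\sHo{z-P_hz}{\D} \ls (hk\LtD{g})(hk\LtD{e})$ via \eqref{eq:errPh} together with the $H^2$-regularity $\sHt{\hat w^\phi}{\OchO} \ls k\LtD{g}$ and $\sHt{z}{\OchO} \ls k\LtD{e}$. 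The remaining two lower-order terms are bounded respectively by $k^2 \LtD{e}\LtD{z - P_h z} \ls k^3 h^2 \LtD{e}^2$ and $\theta_3 k^2 h^2 \LtD{e}^2$. Choosing $C_0$ and $\theta_3$ sufficiently small allows the quadratic contributions in $\LtD{e}$ to be absorbed, yielding $\LtD{e} \ls k^2 h^2 \LtD{g}$.

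\textbf{Step 2 (energy-norm estimate).} Galerkin orthogonality gives $a^\phi(\xi, \xi) = -a^\phi(\eta, \xi)$, which combined with the definition of $a^\phi$ and the energy norm \eqref{He} yields
\[ \He{\xi}^2 = -\Re a^\phi(\eta, \xi) + 2k^2 \LtD{\xi}^2 + k^2\vep \Re \bgS{2\abs{\phi+\uinc}^2 \xi + (\phi+\uinc)^2 \overline{\xi},\, \xi}_{\Omz}. \]
The dominant piece of $|a^\phi(\eta,\xi)|$ is $|(A\na\eta,\na\xi)| \ls \sHo{\eta}{\D}\He{\xi} \ls hk\LtD{g}\He{\xi}$; the $k^2\abs{(B\eta,\xi)}$ and nonlinear $L^2$-type pieces are of order $k^2\|\eta\|_0\|\xi\|_0/(k^{-1}\He{\xi})\cdot k^{-1}\He{\xi}$, i.e., of order $k^2 h^2 \LtD{g}\He{\xi}$, which is swallowed by the leading $hk\LtD{g}\He{\xi}$ under $k^3h^2\leq C_0$. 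For the remaining two terms, the $L^2$-bound from Step~1 gives $\LtD{\xi} \leq \LtD{e}+\LtD{\eta} \ls k^2 h^2\LtD{g}$, whence $k^2\LtD{\xi}^2 \ls k^6 h^4\LtD{g}^2$. After applying Young's inequality on $hk\LtD{g}\He{\xi}$ and absorbing $\tfrac12\He{\xi}^2$ into the left side, I conclude $\He{\xi}\ls(kh+k^3h^2)\LtD{g}$. Combined with $\He{\eta}\ls kh\LtD{g}$ from \eqref{eq:errPh}, this gives $\He{e}\ls(kh+k^3h^2)\LtD{g}$.

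\textbf{Existence and main obstacle.} Uniqueness of $\hat w^\phi_h$ is obtained by running the above argument with $g = 0$, for which $\hat w^\phi = 0$ by Lemma \ref{lem:stabap}; the $L^2$-estimate then forces any homogeneous discrete solution to vanish, and existence follows since \eqref{eq:dvap} is a square linear system. The main technical obstacle is verifying the adjoint stability $\He{z} + k^{-1}\sHt{z}{\OchO}\ls\LtD{e}$, since the adjoint of $a^\phi$ differs from $a^\phi$ itself by the swap $u\leftrightarrow\overline{u}$ in the Kerr-type perturbation; nevertheless, because $A$ is complex symmetric and $\Linf{\phi+\uinc}$ bounds both the original and conjugated perturbations, the proofs of Lemmas \ref{lem:stabl}--\ref{lem:stabap} carry over unchanged to the adjoint setting.
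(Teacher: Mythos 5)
Your proof is correct and follows essentially the same strategy as the paper's: Galerkin orthogonality, an Aubin--Nitsche duality argument for the $L^2$-bound under the smallness of $k^3h^2$ and $\theta_3$, and then a bootstrap through the elliptic projection for the energy-norm bound. The only notable (and harmless) deviation is that you pose the dual problem with the full perturbed form $a^\phi(v,z)=(v,e)$, which forces you to verify adjoint stability of the Kerr-perturbed PML operator, whereas the paper sidesteps this by using the unperturbed dual problem $a(v,z)=(v,e^\phi)$ (so that $\hat L\bar z=\overline{e^\phi}$ and Lemma \ref{lem:stabl} applies directly) and absorbing the perturbation term via the bound $k\LtD{P_hz}\ls(1+k^2h^2)\LtD{e^\phi}$; your justification of the adjoint stability, based on the fact that the perturbation is controlled by $\Linf{\phi+\uinc}^2$ regardless of conjugation, is sound.
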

\begin{proof}
We can easily see the following Galerkin orthogonality for the error $e^\phi:= \hat w^\phi-\hat w^\phi_h$: 
\begin{equation}\label{eq:orap} 
a^\phi(e^\phi,v_h) = 0\quad \forall\, v_h\in V_h.
\end{equation}
Consider the dual system to the linear PML problem: find $z\in H^1_0(\D)$ such that
\begin{equation} 
a(v,z) = (v, e^\phi) \quad \forall\, v \in H^1_0(\D). \label{eq:dualap}
\end{equation}
It is obvious that $\hat L \overline{z} = \overline{e^\phi}$. From Lemma \ref{lem:stabl}, the following stability estimate for $z$ holds: 
\eqn{\He{z} + k^{-1} \Ht{z}{\OchO} \ls \LtD{e^\phi},} 
which together with \eqref{eq:dualap}, \eqref{eq:aphi}, \eqref{eq:orap} and \eqref{eq:Ph}--\eqref{eq:errPh} gives
\begin{align*}
\LtD{e^\phi}^2 =&\; a(e^\phi,z) = 
 a^\phi(e^\phi,z) + k^2\vep \bgS{2\abs{\phi+\uinc}^2 e^\phi + (\phi+\uinc)^2 \overline{e^\phi}, z}_{\Om_0} \\
 =&\; a^\phi(e^\phi,z-P_h z) + k^2\vep \bgS{2\abs{\phi+\uinc}^2 e^\phi + (\phi+\uinc)^2 \overline{e^\phi}, z}_{\Om_0} \\
 =&\;\big( A\na (\hat w^\phi- I_h \hat w^\phi ),\na(z-P_h z) \big) - k^2\big(B e^\phi, z-P_h z\big) \\
 &\; + k^2\vep \big(2\abs{\phi+\uinc}^2 e^\phi + (\phi+\uinc)^2 \overline{e^\phi}, P_hz\big)_{\Omz} \\
 \ls&\; h^2 \Ht{\hat w^\phi}{\OchO}\Ht{z}{\OchO} + k^2 h^2 \LtD{e^\phi} \Ht{z}{\OchO} \\ 
 &\; + k^2\vep \bgS{\Linf{\phi}^2+\Linf{\uinc}^2} \LtD{e^\phi}\LtD{P_h z} \\
 \ls&\; k^2h^2\LtD{g}\LtD{e^\phi} + k^3h^2 \LtD{e^\phi}^2 + \ta_3 k \LtD{e^\phi}\LtD{P_h z},
\end{align*}
where $I_h$ is the standard finite element interpolation operator. Noting that 
\[k\LtD{P_h z} \leq k\LtD{z} + k\LtD{z- P_h z} \leq \He{z} + kh^2\Ht{z}{\OchO}\ls (1+k^2h^2)\LtD{e^\phi},\]
we arrive at
\[\LtD{e^\phi} \ls k^2h^2\LtD{g} + \Sp{k^3h^2+\ta_3(1+k^2h^2)} \LtD{e^\phi}.\]
If $k^3h^2\leq C_0$ and $\ta_3$ are both small enough, then the above result leads directly to the $L^2$-error estimate:
\begin{equation}\label{eq:L2errap} 
\LtD{e^\phi} \ls k^2h^2 \LtD{g}.
\end{equation}

Next we estimate $\LtD{\na e^\phi}$. Decomposing $e^\phi=(\hat w^\phi-P_h^*\hat w^\phi)-(\hat w^\phi_h-P_h^*\hat w^\phi)=:\eta-\xi_h$, we can then easily get from \eqref{eq:Ph} and \eqref{eq:orap} that 
\begin{align*}
\LtD{\na\xi_h}^2 &\eqsim\abs{(A\na\xi_h,\na\xi_h)} =\abs{(A\na e^\phi,\na\xi_h)} \\ 
&= k^2\big| \big( Be^\phi + 2\vep\oneo\abs{\phi+\uinc}^2 e^\phi + \vep\oneo(\phi+\uinc)^2 \overline{e^\phi},\xi_h\big) \big|  \\
&\ls (k^2+\theta_3 k)\LtD{e^\phi}\LtD{\xi_h}.
\end{align*}
Noting that $\LtD{\eta}\ls kh^2 \LtD{g}$ and $\LtD{\xi_h}\leq \LtD{\eta}+\LtD{e^\phi}\ls k^2h^2\LtD{g}$, we arrive at
\[ \LtD{\na\xi_h}^2 \ls(k^6h^4+\theta_3k^5h^4)\LtD{g}^2,
\]
and hence $\LtD{\na\xi_h}\ls k^3h^2\LtD{g}$, which together with \eqref{eq:errPh} implies that
\[ \LtD{\na e^\phi}\ls\LtD{\na\eta}+\LtD{\na\xi_h} \ls (kh+k^3h^2)\LtD{g}.
\]
The estimate with respect to the energy norm in \eqref{eq:errap} follows by noting that $\He{\cdot}\eqsim \LtD{\na \cdot}+k\LtD{\cdot}$.
\end{proof}

By combining Lemmas \ref{lem:errap} and \ref{lem:stabap}, we can obtain the stability estimate for $\hat w^\phi_h$.
\begin{corollary}\label{cor:stabdap}
  Under the conditions of Lemma \ref{lem:errap}, there holds
  \begin{equation}
  \big\Vert{\hskip -1pt}\big\vert \hat w^\phi_h\big\Vert{\hskip -1pt}\big\vert \ls \LtD{g}, \label{eq:stabdap}
  \end{equation}
and hence, the discrete auxiliary problelm \eqref{eq:dvap} is well-posed.
\end{corollary}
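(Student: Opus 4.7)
The plan is to combine the error estimate of Lemma~\ref{lem:errap} with the continuous stability estimate of Lemma~\ref{lem:stabap} via the triangle inequality, and then deduce well-posedness of \eqref{eq:dvap} from the fact that $V_h$ is finite-dimensional. First, I would observe that the hypothesis $k^3h^2\le C_0$ together with $k\gg 1$ forces both $k^3h^2\ls 1$ and $kh=\sqrt{(k^3h^2)/k}\ls 1$, so the factor $(kh+k^3h^2)$ appearing in Lemma~\ref{lem:errap} is bounded by an absolute constant independent of $h$ and $k$.

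Granting for the moment that a discrete solution $\hat w^\phi_h$ exists, I would then simply write
\[
\He{\hat w^\phi_h}\le \He{\hat w^\phi_h-\hat w^\phi}+\He{\hat w^\phi},
\]
bound the first term by $(kh+k^3h^2)\LtD{g}\ls\LtD{g}$ using Lemma~\ref{lem:errap}, and bound the second by $\LtD{g}$ using Lemma~\ref{lem:stabap} (whose hypothesis on $\phi$ and $\uinc$ is guaranteed by the assumption $\theta_3\le\theta_0$ after possibly shrinking $\theta_3$). Adding the two gives the desired stability bound \eqref{eq:stabdap}.

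For well-posedness, since \eqref{eq:dvap} is a square linear system on the finite-dimensional space $V_h$, it suffices to show uniqueness. I would set $g=0$: the continuous auxiliary problem then has the unique solution $\hat w^\phi=0$ (by Lemma~\ref{lem:stabap}), and for any discrete solution $\hat w^\phi_h$ of the homogeneous equation one has $e^\phi:=\hat w^\phi-\hat w^\phi_h=-\hat w^\phi_h$. Replaying the duality argument from the proof of Lemma~\ref{lem:errap} (which only uses Galerkin orthogonality of $e^\phi$, not the existence of the discrete solution per se) yields $\LtD{e^\phi}\ls k^2h^2\LtD{g}=0$, so $\hat w^\phi_h=0$. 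Uniqueness, and hence existence, of \eqref{eq:dvap} follows, after which the stability estimate \eqref{eq:stabdap} applies unconditionally.

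The main subtlety is to avoid circularity: the stability estimate is deduced from the error bound of Lemma~\ref{lem:errap}, which itself is stated for the difference of existing solutions. The cleanest way around this is to notice, as above, that the duality argument behind Lemma~\ref{lem:errap} goes through for \emph{any} discrete solution of the homogeneous problem and thus produces the uniqueness needed to secure existence on the finite-dimensional space $V_h$. Apart from this bookkeeping, the proof is essentially a one-line triangle inequality.
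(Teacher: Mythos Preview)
Your proposal is correct and follows the same route the paper indicates in the sentence preceding the corollary: the stability bound is obtained by the triangle inequality from Lemma~\ref{lem:errap} and Lemma~\ref{lem:stabap}, using $kh+k^3h^2\ls 1$ under the mesh condition. Your additional care in deducing well-posedness---observing that the duality argument of Lemma~\ref{lem:errap} applies to any discrete solution of the homogeneous problem, yielding uniqueness and hence existence on the finite-dimensional (real-linear) space $V_h$---fills in a step the paper leaves implicit.
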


We end this subsection by giving an interior $L^\infty$-estimate for $\hat w^\phi_h$.

\begin{lemma}\label{lem:infdap}
Under the conditions of Lemma \ref{lem:errap}, there holds
\begin{equation}
\big\Vert \hat w^\phi_h \big\Vert_{L^\infty(\Omz)} \ls k^{\frac{d-3}2}\abs{\ln h}^{\bar{d}}\LtD{g} 
\quad {\rm with} ~~\bar{d}= \begin{cases}
  0, & d=2,\\
  1, & d=3.
\end{cases}
\label{eq:infdap}
\end{equation}
\end{lemma}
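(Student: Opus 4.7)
The plan is to estimate the interior $L^\infty$ norm of $\hat w^\phi_h$ by combining the continuous interior $L^\infty$ bound of Lemma~\ref{lem:linfap} with a discrete duality argument against a (regularized) Dirac, in the spirit of Schatz--Wahlbin interior max-norm theory. Since $\hat w^\phi_h$ is piecewise linear, its $L^\infty(\Omega_0)$-norm is attained at some node $x_0\in\bar\Omega_0$, so it suffices to control $|\hat w^\phi_h(x_0)|$. Pick $\Omega_1$ with $\Omega_0\subset\subset\Omega_1\subset\subset\Om$ as in Remark~\ref{rm:stabap}; the estimate $\|\hat w^\phi\|_{L^\infty(\Omega_1)}\lesssim k^{(d-3)/2}\LtD{g}$ is then available, and by the triangle inequality it is enough to prove the pointwise FEM error bound $|(\hat w^\phi - \hat w^\phi_h)(x_0)|\lesssim k^{(d-3)/2}|\ln h|^{\bar d}\LtD{g}$.

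To this end, introduce a regularized Dirac $\delta_h$ localized in an $O(h)$-ball around $x_0$ with $(v_h,\delta_h) = v_h(x_0)$ for every $v_h\in V_h$, and let $z\in H^1_0(\D)$ solve the continuous adjoint auxiliary problem $a^\phi(v,z) = (v,\delta_h)$ for all $v\in H^1_0(\D)$, with FEM approximation $z_h\in V_h$. Galerkin orthogonality then yields the identity
\[
\hat w^\phi_h(x_0) \;=\; (\hat w^\phi_h,\delta_h) \;=\; (g,z) \;-\; a^\phi(\hat w^\phi - \hat w^\phi_h,\,z - z_h).
\]
The first summand is controlled by $\LtD{g}\LtD{z}$: by further dualising, $\LtD{z} = \sup_{\|\psi\|_0=1}|(z,\psi)| = \sup_\psi|(w_\psi,\delta_h)|\le\sup_\psi\|w_\psi\|_{L^\infty(\Omega_1)}\|\delta_h\|_{L^1(\D)}$, where $w_\psi$ solves the primal auxiliary problem with RHS $\psi$, and Lemma~\ref{lem:linfap}/Remark~\ref{rm:stabap} gives $\LtD{z}\lesssim k^{(d-3)/2}$. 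The second summand is handled by Cauchy--Schwarz in the energy and $L^2$ norms, invoking the FEM error estimates of Lemma~\ref{lem:errap} applied to both the primal (with data $g$) and the adjoint (with data $\delta_h$).

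The $|\ln h|^{\bar d}$ factor enters through the FEM error analysis of the adjoint problem with the rough data $\delta_h$: since $\LtD{\delta_h}\lesssim h^{-d/2}$, a direct invocation of Lemma~\ref{lem:errap} produces a suboptimal bound, and one must instead use a more refined localized error estimate. In 2D, the sharp Helmholtz Green's-function bound $\|G(x,\cdot)\|_{L^2(\Om)}\lesssim k^{-1/2}$ used in the proof of Lemma~\ref{lem:linfap} keeps the estimate log-free ($\bar d=0$). In 3D, the $|x-x_0|^{-1}$ singularity of the Helmholtz Green's function, integrated against the Dirac-type data regularized at scale $h$, produces the $|\ln h|$ factor ($\bar d=1$). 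This mirrors the decomposition $\hat w = v+w+\zeta$ used in Lemma~\ref{lem:linfap}, now applied to the adjoint problem.

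The main obstacle is to control the residual $a^\phi(\hat w^\phi - \hat w^\phi_h,\,z - z_h)$ sharply, avoiding any extraneous power of $k$ and avoiding a spurious log in 2D; this forces one to localize the error estimate near $x_0$ and exploit both the interior regularity of $\hat w^\phi$ from Remark~\ref{rm:stabap} and the superconvergent $L^2$-FEM error of Lemma~\ref{lem:errap}, together with the standing assumption $k^3h^2\lesssim 1$. A secondary technical point is that $a^\phi$ is complex and non-Hermitian, so the adjoint problem must be set up via $\overline{a^\phi}$; its continuous and discrete stability/error theory then transfers from Lemmas~\ref{lem:stabap} and \ref{lem:errap} by conjugation.
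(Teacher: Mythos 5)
Your starting identity $\hat w^\phi_h(x_0)=(g,z)-a^\phi(\hat w^\phi-\hat w^\phi_h,z-z_h)$ and the duality bound $\LtD{z}\ls k^{\frac{d-3}2}$ are sound, but the proof does not actually close: the term you yourself identify as ``the main obstacle,'' namely a sharp bound for $a^\phi(\hat w^\phi-\hat w^\phi_h,\,z-z_h)$ with Dirac-type adjoint data, is exactly where all the work lies, and it is left as an appeal to an unspecified ``more refined localized error estimate.'' As you note, a direct use of Lemma \ref{lem:errap} on the adjoint problem loses a factor $h^{-d/2}$ and is unusable; producing the needed localized (weighted-norm) error estimate for the \emph{indefinite} PML operator $a^\phi$ with $k$-explicit constants is a substantial and nontrivial piece of analysis that is absent from the proposal. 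Moreover, your explanation of the dimension-dependent factor $\abs{\ln h}^{\bar d}$ is not correct. Interior maximum-norm error estimates for piecewise-\emph{linear} elements obtained by regularized-Green's-function/Schatz--Wahlbin duality carry a $\abs{\ln h}$ factor in two dimensions as well as three (this is known to be sharp for $\mathcal P_1$); the log does not come from the singularity of the Helmholtz Green's function, so the route you describe would most likely fail to deliver the log-free bound claimed for $d=2$.

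For comparison, the paper proceeds quite differently: it splits $\hat w^\phi_h=\eta_h+(P_h^*\hat w^\phi-\hat w^\phi)+\hat w^\phi$ with $\eta_h=\hat w^\phi_h-P_h^*\hat w^\phi$. The last term is Lemma \ref{lem:linfap}. The middle term is where the dimensions separate: for $d=2$ it is bounded by the crude chain (interpolation $L^\infty$ estimate) $+$ (inverse inequality) $+$ ($O(h^2)$ $L^2$ estimate \eqref{eq:errPh}), giving $kh\LtD{g}\ls k^{-1/2}\LtD{g}$ under $k^3h^2\ls1$ with no log; for $d=3$ that chain only gives $h^{-1/6}\LtD{g}$, so the interior max-norm estimate of Schatz--Wahlbin is used instead and contributes the $\abs{\ln h}$. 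The term $\eta_h$ is identified as the FE approximation of a \emph{coercive} auxiliary problem \eqref{eq:eta} whose right-hand side is already small by Lemma \ref{lem:errap}, so its $L^\infty$ bound follows from \eqref{Linf}, interior $W^{2,p}$ regularity and Sobolev embedding without any log. If you want to pursue your duality route, you would need to either prove the localized error estimate you invoke or restructure the argument so that the adjoint problem with rough data is only ever paired against quantities already controlled in $L^2$ or $L^\infty$ — which is, in effect, what the paper's decomposition accomplishes.
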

\begin{proof}
For the estimate \eqref{eq:infdap}, we write $\eta_h=\hat w^\phi_h-P_h^*\hat w^\phi$, then we have 
by the triangle inequality that 
\eq{\label{ewh} \big\Vert \hat w^\phi_h \big\Vert_{L^\infty(\Omz)} \leq \norm{\eta_h}_{L^\infty(\Omz)} + \big\Vert P_h^*\hat w^\phi - \hat w^\phi\big\Vert_{L^\infty(\Omz)} + \big\Vert \hat w^\phi \big\Vert_{L^\infty(\Omz)}, 
}
and it suffices to estimate three terms on the right-hand side above. 

First, $\Linf{\hat w^\phi}$ was already given in Lemma~\ref{lem:linfap}.

Next, we estimate $\big\Vert P_h^*\hat w^\phi - \hat w^\phi\big\Vert_{L^\infty(\Omz)}$ by considering two 
and three dimensional cases separately.  For the two dimensional case, by using the $L^\infty$-estimate for the FE interpolation (see e.g. \cite[(4.4.8)]{brenner2008}), the inverse estimate (see e.g. \cite[(4.5.4)]{brenner2008}),  the a priori estimate in \eqref{eq:stabap}, the $L^2$-error estimate for the elliptic projection in \eqref{eq:errPh}, and noting that $k^3h^2\ls 1$ , we get
\begin{equation}\label{eq:linf1}
\begin{aligned}
  \big\Vert \hat w^\phi-P_h^*\hat w^\phi \big\Vert_{L^\infty(\Omz)} &\ls  \big\Vert \hat w^\phi-I_h\hat w^\phi \big\Vert_{L^\infty(\Omega_0)}+\big\Vert I_h\hat w^\phi-P_h^*\hat w^\phi \big\Vert_{L^\infty(\Omz)} \\
  &\ls h\big\Vert\hat w^\phi\big\Vert_{2,\Omega_0} + h^{-1}\big\Vert\hat w^\phi-I_h\hat w^\phi-\big(\hat w^\phi-P_h^*\hat w^\phi\big)\big\Vert_{0,\Omega_0} \\
  &\ls  h\big\Vert \hat w^\phi\big\Vert_{2,\Omega_0} + h \Ht{\hat w^\phi}{\OchO}\ls kh\Vert g\Vert_0 = k^{-\frac{1}{2}}(k^3h^2)^{\frac{1}{2}}\Vert g\Vert_0 \\
  & \ls k^{-\frac{1}2}\Vert g\Vert_0.
\end{aligned}
\end{equation}
For the case of $d=3$, we consider a subdomain $\Om_1$
satisfying that $\Omz \subset\subset \Omega_1  \subset\subset \Omega$ and $\dist(\Ga,\Omega_1) \eqsim \dist(\partial\Omega_1,\partial\Omega_0) $, then 
we can use the interior $L^\infty$-error estimates (see \cite[Theorem 5.1]{schatz1977}), \cb{\eqref{eq:errPh}} and  Remark~\ref{rm:stabap} to get
\begin{equation} \label{eq:linfd3}
  \begin{aligned}
    \big\Vert \hat w^\phi-P_h^*\hat w^\phi \big\Vert_{L^{\infty}(\Omz)}&\ls \abs{\ln h}\big\Vert \hat w^\phi\big\Vert_{L^{\infty}(\Omega_1)} + \big\Vert \hat w^\phi-P_h^*\hat w^\phi\big\Vert_0 \\
    &\ls \abs{\ln h}\big\Vert \hat w^\phi\big\Vert_{L^{\infty}(\Omega_1)} + h\big\vert \hat w^\phi \big\vert_1\ls \big(\abs{\ln h}+h\big)\Vert g\Vert_0 \\
     &\ls \abs{\ln h}\Vert g\Vert_0, 
  \end{aligned}
\end{equation}
   
It remains to estimate $\norm{\eta_h}_{L^\infty(\Omz)}$ in \eqref{ewh}. 
From \eqref{eq:Ph} and \eqref{eq:orap}, 
we have for any $v_h\in V_h$, 
\begin{align*}
(A\na \eta_h,\na v_h) &= \big(A\na(\hat w^\phi_h-\hat w^\phi),\na v_h\big) \\
&= k^2 \big( (B+2\vep\oneo\abs{\phi+\uinc}^2)(\hat w^\phi_h-\hat w^\phi) + \vep\oneo(\phi+\uinc)^2\overline{(\hat w^\phi_h-\hat w^\phi)}, v_h\big).
\end{align*}
We can easily see that $\eta_h$ is the finite element approximation of 
the solution $\eta\in H_0^1(\D)$ to the system
\begin{equation} \label{eq:eta}
  \begin{aligned}
  -\na\cdot (A\na \eta) &= k^2 \big( (B+2\vep\oneo\abs{\phi+\uinc}^2)(\hat w^\phi_h-\hat w^\phi) + \vep\oneo(\phi+\uinc)^2\overline{(\hat w^\phi_h-\hat w^\phi)} \big) \quad\text{in }\D.
  \end{aligned}
\end{equation}
By using the a priori estimate and Lemma \ref{lem:errap}, we get
\begin{align}\label{Eetah}
\begin{split}
&\Ht{\eta}{\OchO} \ls  k^2 \bgS{1+\vep\Linf{\phi}^2+\vep\Linf{\uinc}^2} \big\Vert (\hat w^\phi_h-\hat w^\phi) \big\Vert_0 \ls k^4h^2\LtD{g}, \\
&\LtD{\eta-\eta_h} \ls h^2\Ht{\eta}{\OchO} \ls  k^4h^4\LtD{g}.
\end{split}
\end{align}
On the other hand, we know from \eqref{eq:eta} that $\eta \in H_0^1(\D)$ solves the linear PML equation
\begin{align*}
\hat L \eta = Bk^2(\eta_h-\eta) + Bk^2 (P_h^*\hat w^\phi-\hat w^\phi) + k^2\vep\oneo\bgS{2\abs{\phi+\uinc}^2(\hat w^\phi_h-\hat w^\phi) + (\phi+\uinc)^2\overline{(\hat w^\phi_h-\hat w^\phi)}}.
\end{align*}
By combining \eqref{Linf}, \eqref{Eetah}, \eqref{eq:errPh} and Lemma \ref{lem:errap}, and noting that $k^3h^2\ls 1$, we can derive 
\begin{equation}\label{eq:Linfeta}
\begin{aligned} 
\Vert \eta \Vert_{L^\infty(\Omz)} &\ls k^{\frac{d-3}2} k^2\big( \Vert\eta_h-\eta \Vert_0+ \big\Vert P_h^*\hat w^\phi-\hat w^\phi\big\Vert_0+k^{-1}\big\Vert \hat w^\phi_h-\hat w^\phi \big\Vert_0 \big) \\
&\ls k^{\frac{d-3}2}(k^6h^4+k^3h^2+k^3h^2) \LtD{g} 
\ls  k^{\frac{d-3}2}\LtD{g}.
\end{aligned}
\end{equation}
Furthermore, using the interior $L^\infty$-error estimates, \eqref{Eetah}, the interpolation error estimates, the interior $W^{2,p}$-estimates (see \cite[Theorem 9.11]{gilbarg2001}) for the elliptic problem \eqref{eq:eta}, and the Sobolev embedding  $H^1 \hookrightarrow L^q$ (with $q=6$ for $d=3$ and $q=7$ for $d=2$), we can deduce 
\begin{align*} 
\Vert \eta-\eta_h \Vert_{L^\infty(\Omz)} &\ls \abs{\ln h} \Vert \eta-I_h\eta \Vert_{L^\infty(\Omega_1)}+\LtD{\eta-\eta_h} \\
 &\ls \abs{\ln h} h^{2-\frac{d}{q}}\norm{\eta}_{W^{2,q}(\Omega_1)} +k^4h^4\LtD{g} \\
 &\ls \abs{\ln h} h^{2-\frac{d}{q}}\big(\Vert \eta\Vert_{L^{q}(\Omega)} + k^2\big\Vert \hat w^\phi_h-\hat w^\phi \big\Vert_{L^{q}(\Om)}\big) + k^4h^4\LtD{g} \\
 &\ls \abs{\ln h} h^{2-\frac{d}{q}}\big(\Vert \eta\Vert_{2,\OchO} + k^2\big\Vert \hat w^\phi_h-\hat w^\phi \big\Vert_{1}\big)  + k^4h^4\LtD{g}\\
 &\ls \big( \abs{\ln h}  h^{2-\frac{d}{q}}(k^4h^2+k^2(kh+k^3h^2)) +k^4h^4 \big) \LtD{g}\\
 &\ls \big( \abs{\ln h} h^{\frac{d-1}3-\frac dq} (k^3h^2)^{\frac{7-d}{6}} + k^{-\frac{d+1}2}\big)k^{\frac{d-3}2}\LtD{g} \\
 &\ls k^{\frac{d-3}2}\LtD{g},
\end{align*}
where we have used ${(d-1)}/3- d/q>0$ in the last inequality. This together with \eqref{eq:Linfeta} yields 
\[ \Linf{\eta_h} \leq \Linf{\eta} + \Linf{\eta-\eta_h} \ls k^{\frac{d-3}2}\LtD{g}.
\]
Now the desired estimate \eqref{eq:infdap} is a consequence of the above estimate, 
\eqref{eq:linf1}--\eqref{eq:linfd3}, and \eqref{eq:Linfap}. 
\end{proof}

\begin{remark}
  In \cite[Lemma 3.4]{wuzou2018}, a similar interior  estimate to \eqref{eq:infdap} 
  was established for the FE solution to a linear auxiliary problem with impedance boundary condition, that is, the  $L^\infty(\Om_0)$ norm of the FE solution is bounded by $O(\abs{\ln h} k^{\frac{d-3}2})$ for both the two and three dimensional cases. But we note that in two dimensions, the new estimate \eqref{eq:infdap} for the linear auxiliary problem with PML boundary condition improves the the previous estimate by removing the logarithmic factor in $h$. However, if we use the same technique as we did 
in \eqref{eq:linf1} to deal with the case of $d=3$, we will get the estimate
  \[
    \| \hat w^\phi - P_h^* \hat w^\phi \|_{L^\infty(\Omega_0)} \lesssim h^{\frac12} \| \hat w^\phi \|_{2,\Omega\cup\hat\Omega} \lesssim kh^{\frac12}\|g\|_0 \lesssim {h^{-\frac16}} \|g\|_0,\quad \mbox{if }k^3h^2 \lesssim 1,
  \]
which is obviously much worse than the estimate \eqref{eq:linfd3}. This is the main reason why we have separated 
the case of $d=3$ from the case of $d=2$ in our analysis. 

\end{remark}

\subsection{Preasymptotic error estimates}\label{sec:pee}
Like the analyses in Section \ref{sec:anas} for the continuous nonlinear PML system 
\eqref{eq:PML}, 
we consider an Newton's iterative procedure to approach the solution $u_h$ to the nonlinear FEM \eqref{eq:FEM}
(with $a^\phi(\cdot,\cdot)$ defined as \eqref{eq:aphi}): 

For a given $u_h^0\in V_h$, find $u_h^{l+1}\in V_h$ for $l=0,1,2,\cdots$, such that
\begin{equation}\label{eq:dip}
a^{u_h^l}(u_h^{l+1},v_h)= (f,v_h)_{\Om} - k^2\vep\big(2\,|u_h^l+\uinc|^2 u_h^l - (u_h^l+\uinc)^2 \overline{\uinc},v_h\big)_{\Omz} \quad\forall v_h\in V_h\,.
\end{equation}

We first give the stability estimates for the discrete solutions $u_h^l$ with $l\geq 1$, 
whose proof follows from the one of Lemma~\ref{lem:pmlip}, except for using Corollary~\ref{cor:stabdap} and Lemma~\ref{lem:infdap} instead of Lemmas~\ref{lem:stabap}-\ref{lem:linfap}, respectively. 
\begin{lemma}
Let the conditions of Lemma \ref{lem:stabl} be satisfied, and there exists a positive constant $\ta_4\ls 1$  such that 
\begin{align}
&k\norm{u_h^0}_{0,\Omega_0}\ls\Mf,\quad \Linf{u_h^0}\ls k^{\frac{d-3}2}\abs{\ln h}^{\bar{d}}\Mf,\quad \label{eq:cond4-2} \\
&\max\Big\{ k^{d-2}\vep\abs{\ln h}^{2\bar{d}}\Mf^2,~k\vep\Linf{\uinc}^2\Big\} \leq \ta_4,\label{eq:cond4-3}
\end{align}
then the following estimates hold, under the condition that 
$k^3h^2\leq C_0$:
\begin{equation} \label{eq:lem4-5-1}
\He{u_h^l} \ls \Mf \qaq \Linf{u_h^l} \ls k^{\frac{d-3}2}\abs{\ln h}^{\bar{d}} \Mf \quad 
\mbox{for} ~~l=1,2,\cdots 
\end{equation}
\end{lemma}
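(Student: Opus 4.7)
The plan is to proceed by induction on $l$, in direct parallel with the proof of Lemma~\ref{lem:pmlip}, replacing the continuous stability results Lemmas~\ref{lem:stabap}--\ref{lem:linfap} by their discrete counterparts Corollary~\ref{cor:stabdap} and Lemma~\ref{lem:infdap}. The first step is to recast the Newton iteration \eqref{eq:dip} as $a^{u_h^l}(u_h^{l+1}, v_h) = (\hat f_h^l, v_h)$ for all $v_h \in V_h$, where the effective source is
\begin{align*}
\hat f_h^l := f - k^2 \vep \oneo \BgS{2|u_h^l|^2 u_h^l + 2|u_h^l|^2 \uinc + (u_h^l)^2 \overline{\uinc} - |\uinc|^2 \uinc},
\end{align*}
exactly as in the expansion preceding \eqref{tfscMf}. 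Then $u_h^{l+1}$ is the FE solution of \eqref{eq:dvap} with $\phi = u_h^l$ and $g = \hat f_h^l$.

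Fix two constants $\widetilde C_\mathcal{L}$, $\widetilde C_\infty$ mirroring those in the proof of Lemma~\ref{lem:pmlip} (determined by the generic constants hidden in Corollary~\ref{cor:stabdap} and Lemma~\ref{lem:infdap}), and adopt the inductive hypothesis
\begin{align*}
k\|u_h^l\|_{0,\Omega_0} \leq \widetilde C_\mathcal{L} M(f), \qquad \|u_h^l\|_{L^\infty(\Omega_0)} \leq \widetilde C_\infty k^{(d-3)/2}|\ln h|^{\bar d} M(f),
\end{align*}
whose base case $l=0$ is precisely assumption \eqref{eq:cond4-2}. Under this hypothesis together with \eqref{eq:cond4-3}, one gets $k\vep\|u_h^l\|_{L^\infty(\Omega_0)}^2 \leq \widetilde C_\infty^2 \theta_4$ and $k\vep\|\uinc\|_{L^\infty}^2 \leq \theta_4$, so requiring $\theta_4 \leq \theta_3/\widetilde C_\infty^2$ (with $\theta_3$ from Lemma~\ref{lem:errap}) together with the assumed $k^3h^2 \leq C_0$ activates Corollary~\ref{cor:stabdap} and Lemma~\ref{lem:infdap} for $\phi = u_h^l$.

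Next, estimate $\|\hat f_h^l\|_0$ term by term, exactly as in derivation \eqref{tfscMf}: the inductive $L^\infty$ and $L^2$ bounds handle the cubic term $2|u_h^l|^2 u_h^l$ and the two mixed terms involving $\uinc$, while the $L^6$-bound built into the definition \eqref{Mf} of $M(f)$ absorbs the pure $|\uinc|^2\uinc$ contribution. The only new feature compared with Lemma~\ref{lem:pmlip} is that the $L^\infty$-bound on $u_h^l$ now carries an extra factor $|\ln h|^{\bar d}$, which is precisely the factor inserted into the smallness threshold \eqref{eq:cond4-3}; so the same algebra yields $\|\hat f_h^l\|_0 \leq 2M(f)$ provided $\theta_4$ is small enough. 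Applying Corollary~\ref{cor:stabdap} then gives $\He{u_h^{l+1}} \leq \widetilde C_\mathcal{L} M(f)$ (whence $k\|u_h^{l+1}\|_{0,\Omega_0} \leq \widetilde C_\mathcal{L} M(f)$), and Lemma~\ref{lem:infdap} gives $\|u_h^{l+1}\|_{L^\infty(\Omega_0)} \leq \widetilde C_\infty k^{(d-3)/2}|\ln h|^{\bar d} M(f)$, closing the induction and yielding \eqref{eq:lem4-5-1}.

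The only real technical point—more bookkeeping than conceptual obstacle—is the simultaneous calibration of $\widetilde C_\mathcal{L}$, $\widetilde C_\infty$, and $\theta_4$: the constants $\widetilde C_\mathcal{L}$, $\widetilde C_\infty$ are fixed multiples of the generic constants in the discrete estimates, after which $\theta_4$ must be shrunk so that both the activation conditions of Corollary~\ref{cor:stabdap}, Lemma~\ref{lem:infdap} and the source bound $\|\hat f_h^l\|_0 \leq 2M(f)$ survive the inductive step. This is the identical juggling carried out for Lemma~\ref{lem:pmlip}, now with $|\ln h|^{\bar d}$ from \eqref{eq:infdap} absorbed into \eqref{eq:cond4-3}.
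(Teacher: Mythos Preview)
Your proposal is correct and follows essentially the same approach as the paper: the paper states that the proof ``follows from the one of Lemma~\ref{lem:pmlip}, except for using Corollary~\ref{cor:stabdap} and Lemma~\ref{lem:infdap} instead of Lemmas~\ref{lem:stabap}--\ref{lem:linfap},'' which is precisely the induction you have carried out, including the correct handling of the extra $|\ln h|^{\bar d}$ factor absorbed into condition~\eqref{eq:cond4-3}.
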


By taking the limit $l\to \infty$ and following the proof of Theorem \ref{thm:stabpml}, we can obtain the following stability estimates of the FE solution $u_h$ to \eqref{eq:FEM}. 
\begin{theorem}\label{thm:fem_estimates}
Let the conditions of Lemma \ref{lem:stabl} be satisfied, there exists a positive constant $\ta_5 \ls 1$ 
such that if $k^3h^2\leq C_0$ (with $C_0$ is from Lemma~\ref{lem:errap}) and 
\begin{equation}\label{eq:FEM_conditions}
\max\Big\{ k^{d-2}\vep\abs{\ln h}^{2\bar{d}}\Mf^2,~k\vep\Linf{\uinc}^2\Big\} \leq \ta_5,
\end{equation} 
then the FEM \eqref{eq:FEM} attains a unique solution $u_h$ satisfying the estimates:
\begin{equation} \label{eq:stab:uh}
\He{u_h} \ls \Mf \qaq \Linf{u_h} \ls k^{\frac{d-3}2}\abs{\ln h}^{\bar{d}}\Mf.
\end{equation}
\end{theorem}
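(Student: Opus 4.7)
The plan is to mirror the argument for the continuous Theorem~\ref{thm:stabpml}, but now at the discrete level, using Corollary~\ref{cor:stabdap} and Lemma~\ref{lem:infdap} in place of Lemmas~\ref{lem:stabap}--\ref{lem:linfap}. Starting from an initial guess $u_h^0\in V_h$ satisfying \eqref{eq:cond4-2}, the preceding lemma already supplies the uniform bounds \eqref{eq:lem4-5-1} for the Newton iterates $\{u_h^l\}$ defined by \eqref{eq:dip}. So it remains to prove that $\{u_h^l\}$ is Cauchy in the energy norm, identify the limit as a solution of \eqref{eq:FEM}, establish the stated a priori bounds, and finally prove uniqueness.

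First, I set $v_h^l:=u_h^{l+1}-u_h^l\in V_h$ and subtract two consecutive instances of \eqref{eq:dip}. After elementary algebra the difference satisfies
\begin{equation*}
a^{u_h^l}(v_h^l, w_h) = k^2\vep\Bigl(\bigl(|u_h^l+\uinc|^2-|u_h^{l-1}+\uinc|^2\bigr)\overline{(u_h^l+\uinc)} - 2|u_h^{l-1}+\uinc|^2\,v_h^{l-1},\, w_h\Bigr)_{\Om_0}
\end{equation*}
for all $w_h\in V_h$; this is exactly the discrete analogue of the identity used in Theorem~\ref{thm:stabpml}. Now I apply Corollary~\ref{cor:stabdap} with $\phi=u_h^l$ (whose $L^\infty(\Om_0)$-norm is controlled by \eqref{eq:lem4-5-1} and \eqref{eq:FEM_conditions}, so the smallness hypothesis of Lemma~\ref{lem:errap} is satisfied) to deduce
\begin{equation*}
\He{v_h^l} \;\lesssim\; k^2\vep\bigl(\Linf{u_h^l}^2+\Linf{u_h^{l-1}}^2+\Linf{\uinc}^2\bigr)\,\Lt{v_h^{l-1}}{\Om_0}
\;\lesssim\; \bigl(k^{d-2}\vep|\ln h|^{2\bar d}\Mf^2+k\vep\Linf{\uinc}^2\bigr)\,k\Lt{v_h^{l-1}}{\Om_0}.
\end{equation*}
Choosing $\theta_5$ smaller than $\theta_4$ and small enough that the bracketed quantity is $\le 1/4$ yields $\He{v_h^l}\le \tfrac12\He{v_h^{l-1}}$, hence $\{u_h^l\}$ is Cauchy. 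Its limit $u_h\in V_h$ then satisfies \eqref{eq:FEM} by passing to the limit in \eqref{eq:dip}, and inherits the bounds \eqref{eq:stab:uh} from \eqref{eq:lem4-5-1}.

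For uniqueness, suppose $\tilde u_h\in V_h$ is another solution to \eqref{eq:FEM} satisfying \eqref{eq:stab:uh}; this second bound must be imposed because the arguments above only control ``low-energy'' discrete solutions, in the same spirit as Remark~\ref{rmk:low-energy}. The difference $w_h:=u_h-\tilde u_h$ satisfies a linearized equation of the form \eqref{eq:dvap} with $\phi=\tilde u_h$ and a right-hand side that is bounded (via the identity $|a|^2a-|b|^2b = \cdots$) by $k^2\vep(\Linf{u_h}^2+\Linf{\tilde u_h}^2+\Linf{\uinc}^2)\Lt{w_h}{\Om_0}$. Applying Corollary~\ref{cor:stabdap} once more and invoking the smallness condition \eqref{eq:FEM_conditions} produces $\He{w_h}\le \tfrac12\He{w_h}$, forcing $w_h=0$.

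The main obstacle, as in the continuous case, is controlling the cubic nonlinearity by the $L^2$-norm of the difference rather than by a higher Sobolev norm; this is only possible because Lemma~\ref{lem:infdap} gives the sharp interior $L^\infty$-bound with the factor $|\ln h|^{\bar d}$, which in turn dictates the precise form of the smallness condition \eqref{eq:FEM_conditions}. A secondary technical point is that the application of Lemma~\ref{lem:errap} (through Corollary~\ref{cor:stabdap}) requires the hypothesis $k^3h^2\le C_0$ to be propagated uniformly across all iterates and across the limiting procedure, but this is automatic because $C_0$ is independent of $l$.
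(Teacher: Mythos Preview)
Your proposal is correct and mirrors precisely what the paper does: it states (just before Theorem~\ref{thm:fem_estimates}) that the result follows ``by taking the limit $l\to\infty$ and following the proof of Theorem~\ref{thm:stabpml}'', i.e., exactly your Cauchy-sequence argument with Corollary~\ref{cor:stabdap} and Lemma~\ref{lem:infdap} replacing Lemmas~\ref{lem:stabap}--\ref{lem:linfap}. One cosmetic point: in the identity for $v_h^l$ the right-hand side should carry $\bigl((u_h^l+\uinc)^2-(u_h^{l-1}+\uinc)^2\bigr)\overline{(u_h^l+\uinc)}$ rather than the moduli-squared difference (compare the display in the proof of Theorem~\ref{thm:stabpml}), but the ensuing $L^\infty$--$L^2$ bound and contraction are unaffected.
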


We can naturally get an error estimate between the NLH problem \eqref{eq:Helm}--\eqref{eq:Somm} and the FEM \eqref{eq:FEM}, which follows by applying Theorem~\ref{thm:conver}, Lemma \ref{lem:errap} and taking $l\to\infty$ in \eqref{ip1} 
and \eqref{eq:dip}, respectively. 
\begin{theorem}\label{thm:err}
  Under the conditions of Lemma \ref{lem:stabl}, there exist constants $C_1, C_2, C_3, \ta>0$ such that if   $k^3h^2\le C_0$ (with $C_0$ is from Lemma~\ref{lem:errap}) and 
  \eq{\label{cond:err} \max\Big\{ k^{d-2}\vep\abs{\ln h}^{2\bar{d}}\Mf^2,~k\vep\Linf{\uinc}^2\Big\} \leq\theta, }
  then the FE solution $u_h$ to \eqref{eq:FEM} approximates the NLH solution $u$ to \eqref{eq:Helm}--\eqref{eq:Somm}, with the error estimate
  \begin{equation}\label{eq:err}
  \He{u-u_h}_\Om\le (C_1kh+C_2k^3h^2)\Mf+C_3k^5 e^{-2 k\siz L\big(1-\frac{R^2}{\hR^2+\siz^2L^2}\big)^{1/2}}\Mf.
  \end{equation} 
  \end{theorem}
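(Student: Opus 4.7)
The plan is to bound $\He{u-u_h}_\Om$ via the triangle inequality through the nonlinear PML solution $\hu$,
\[
\He{u-u_h}_\Om \le \He{u-\hu}_\Om + \He{\hu-u_h}_\Om.
\]
The first term is already handled by Theorem~\ref{thm:conver} and yields the exponential $\EPML\Mf$ contribution. The substantive work is therefore to show $\He{\hu-u_h}_\Om \ls (kh+k^3h^2)\Mf$, which I would do by adapting the strategy used in the proof of Theorem~\ref{thm:conver}: compare the two Newton sequences $\{\hu^l\}$ from \eqref{ip1} and $\{u_h^l\}$ from \eqref{eq:dip} through an auxiliary intermediate sequence in $V_h$.

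Specifically, set $\cu_h^0 := u_h^0$ and for $l \ge 0$ define $\cu_h^{l+1}\in V_h$ by the discrete linearized problem that uses the continuous iterate $\hu^l$ as the frozen coefficient:
\[
a^{\hu^l}(\cu_h^{l+1},v_h) = (f,v_h)_\Om - k^2\vep\bgS{2\abs{\hu^l+\uinc}^2\hu^l-(\hu^l+\uinc)^2\overline{\uinc},v_h}_{\Omz} \quad \forall\, v_h\in V_h.
\]
Since $\hu^{l+1}$ solves the continuous version of exactly this linear PML equation, $\cu_h^{l+1}$ is its Galerkin FE approximation in the sense of Section~\ref{au-problem}. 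Applying Lemma~\ref{lem:errap} with $\phi=\hu^l$, together with the uniform bound $\norm{f-k^2\vep\oneo(\cdots)}_0\ls \Mf$ that follows from the stability of $\{\hu^l\}$ in Lemma~\ref{lem:pmlip} exactly as in \eqref{tfscMf}, gives
\[
\He{\hu^{l+1}-\cu_h^{l+1}} \ls (kh+k^3h^2)\Mf.
\]
Next set $\eta_h^l := \cu_h^l-u_h^l\in V_h$. Subtracting \eqref{eq:dip} from the defining equation of $\cu_h^{l+1}$ shows that $\eta_h^{l+1}$ solves a discrete auxiliary system of the form \eqref{eq:dvap} with $\phi=u_h^l$ and right-hand side consisting of terms proportional to $\hu^l-u_h^l$, each multiplied by a factor like $k^2\vep(\abs{\hu^l+\uinc}^2+\abs{u_h^l+\uinc}^2)$. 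Invoking Corollary~\ref{cor:stabdap}, together with the $L^\infty(\Omz)$ bounds provided by Lemma~\ref{lem:pmlip} and Theorem~\ref{thm:fem_estimates} and the smallness condition \eqref{cond:err}, I expect a contraction of the form
\[
\He{\eta_h^{l+1}} \le \tfrac{1}{2}\bgS{\He{\hu^l-\cu_h^l}_\Om + \He{\eta_h^l}}.
\]
Writing $\hu^l-u_h^l = (\hu^l-\cu_h^l)+\eta_h^l$, a straightforward induction (mirroring the closing step of the proof of Theorem~\ref{thm:conver}) then yields $\He{\hu^l-u_h^l}_\Om \ls (kh+k^3h^2)\Mf + 2^{-l}\He{\hu^0-u_h^0}$. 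Taking $l\to\infty$ (both sequences converge by Theorem~\ref{thm:stabpml} and Theorem~\ref{thm:fem_estimates}) closes the argument and, combined with Theorem~\ref{thm:conver}, delivers \eqref{eq:err}.

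The main obstacle is extracting the contraction factor $\tfrac12$ in the recursion for $\eta_h^{l+1}$. This requires controlling the nonlinear cross terms in $L^\infty(\Omz)$ with the correct $k$-dependence, which is precisely why the hypothesis carries the discrete logarithmic factor $\abs{\ln h}^{2\bar d}$: the $L^\infty$-bound for the discrete auxiliary problem (Lemma~\ref{lem:infdap}) involves $\abs{\ln h}^{\bar d}$ in three dimensions, which must then be squared when estimating the cubic nonlinearity. Once this bookkeeping is done, all remaining steps are routine consequences of the linear auxiliary estimates collected in Sections~\ref{au-problem}--\ref{sec:anas}.
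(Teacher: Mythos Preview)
Your proposal is correct and follows essentially the same approach that the paper indicates: split via the PML solution $\hu$, invoke Theorem~\ref{thm:conver} for the first piece, and for $\He{\hu-u_h}$ mirror the proof of Theorem~\ref{thm:conver} at the discrete level by introducing the intermediate sequence $\cu_h^{l}$, applying Lemma~\ref{lem:errap} for $\He{\hu^{l+1}-\cu_h^{l+1}}$, and Corollary~\ref{cor:stabdap} together with the $L^\infty(\Omz)$ bounds of Lemmas~\ref{lem:pmlip} and~\ref{lem:infdap} to obtain the contraction for $\eta_h^{l+1}$, then let $l\to\infty$. The paper's own proof is only the one-sentence sketch ``apply Theorem~\ref{thm:conver}, Lemma~\ref{lem:errap} and take $l\to\infty$ in \eqref{ip1} and \eqref{eq:dip}''; you have supplied exactly the details that sketch omits.
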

  
  Like Remark \ref{rmk:low-energy}, we remark that Theorem \ref{thm:err} says that the FE solution given by \eqref{eq:FEM} approximates the low-energy solution to NLH if \eqref{cond:err} holds so that the nonlinearity is not too strong. While, this result has not excluded the possibility of multiple solutions to FEM \eqref{eq:FEM}. 
  More specifically,  the following theorem shows that the FE solution $u_h^l$ given by Newton’s iteration \eqref{eq:dip} converges quadratically if the initial guess is close to one of the FE solutions to \eqref{eq:FEM} even for problems with strong nonlinearity. 

\begin{theorem}
Let $\hu^*$ be one of the multiple solutions to the nonlinear PML problem \eqref{eq:PML} and $u_h^*$ be the corresponding FE solution given by \eqref{eq:FEM}. Let 
$\tilde w^* \in H_0^1(\D)$ be the solution to the dual problem of \eqref{eq:ws}:
\eq{\label{eq:dws} \hat L \tilde w^* - k^2\vep \oneo \bgS{2\abs{\hu^* + \uinc}^2 \tilde w^* + \overline{(\hu^* + \uinc)}^2 \overline{\tilde w^*}} = \tilde g^*.}
Suppose the following stability estimate holds for any given function $\tilde g^*\in L^2(\D)$,
\eq{\label{eq:stab:dws}
\He{\tilde w^*} \leq \tilde C^* \norm{\tilde g^*}_0,}
where $\tilde C^*\gtrsim1$ may depend on $k$ and $\hu^*$.
Denote by
\eqn{\gamma^* &:=  \min\big\{ \big(48 \tilde C_1^*k\vep \Linf{\hu^*+\uinc}\big)^{-1}, \big(24 \tilde C_1^*k\vep\big)^{-\frac12} \big\},\\
\tilde N^* &:= \tilde C_1^* C_{\rm Nir}^2k^{\frac{d}{2}} \vep  \bgS{20 \Linf{\hu^*+\uinc} + 12  \gamma^*}, \qaq \\
\tilde \ta^* &:= \min \BgA{\big(\tilde N^*\big)^{-1}, C_{\rm Nir}^{-1} k^{1-\frac{d}{2}}\Linf{\hu^*+\uinc}},}
where $\tilde C_1^*$ is a constant satisfying $\tilde C_1^* \eqsim \tilde C^*\big(1+\vep\Linf{\hu^*+\uinc}^2\big)^{\frac12}$.
Suppose the FE solution $u_h^*$ satisfies $\Linf{\hu^*-u_h^*} \leq \gamma^*$ and the initial guess $u_h^0$ satisfies $\He{u_h^0-u_h^*}\le \tilde\ta^*$,
there exists a positive constant $\tilde C_0$ such that if
\eq{\label{eq:h0} k^3h^2\big(1+\vep \Linf{\hu^*+\uinc}^2\big)^2\tilde C^*\le \tilde C_0,}
 then the Newton's iterative sequence $\{u_h^l\}_{l\geq 1}$ defined by \eqref{eq:dip} converges quadratically to $u_h^*$, namely,
\begin{equation}\label{eq:dquad}
  \He{u_h^{l+1} - u_h^*} \ls \tilde N^* \He{u_h^l - u_h^*}^2, \quad l =0, 1, 2, \cdots.
\end{equation}
\end{theorem}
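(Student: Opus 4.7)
My plan closely mirrors the proof of Theorem \ref{thm:quad}, adapted to the discrete setting. Set $e_h^l := u_h^l - u_h^*$. Subtracting the nonlinear FE equation \eqref{eq:FEM} evaluated at $u_h^*$ from the Newton step \eqref{eq:dip} produces, in complete analogy with \eqref{NT1}, the Galerkin identity for $e_h^{l+1}\in V_h$:
\begin{equation*}
a(e_h^{l+1},v_h) - k^2\vep\bgS{2|u_h^l+\uinc|^2 e_h^{l+1} + (u_h^l+\uinc)^2\overline{e_h^{l+1}},v_h}_{\Omz} = -k^2\vep\bgS{2|e_h^l|^2(u_h^l+\uinc) + (e_h^l)^2 \overline{(u_h^*+\uinc)},v_h}_{\Omz}
\end{equation*}
for all $v_h\in V_h$. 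Since the continuous stability hypotheses \eqref{eq:stab:ws}--\eqref{eq:stab:dws} refer to the linearization around $\hat u^*$ rather than $u_h^*$, I rewrite the left-hand coefficients by decomposing $u_h^l+\uinc = (\hat u^*+\uinc) + (u_h^*-\hat u^*) + e_h^l$. This moves everything but the $(\hat u^*+\uinc)$-part of the linearization to the right-hand side, producing three additional sources there: a purely quadratic contribution in $e_h^l$ (which will drive quadratic convergence), a mixed contribution proportional to $e_h^l\cdot e_h^{l+1}$, and a contribution proportional to $(u_h^*-\hat u^*)\cdot e_h^{l+1}$ whose $L^\infty(\Omz)$-size is controlled by $\gamma^*$.

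The central ingredient is then a discrete stability estimate for the linearized Galerkin problem with coefficients frozen at $\hat u^*$: namely, whenever $a(w_h,v_h)-k^2\vep(2|\hat u^*+\uinc|^2 w_h+(\hat u^*+\uinc)^2\overline{w_h},v_h)_{\Omz}=(g,v_h)$ for all $v_h\in V_h$, one has $\He{w_h}\le \tilde C_1^*\LtD{g}$. I would prove this by a Schatz--Aubin--Nitsche argument following the template of Lemma \ref{lem:errap} and Corollary \ref{cor:stabdap}: the dual stability \eqref{eq:stab:dws} furnishes $L^2$-control of the discrete error via duality, the elliptic projection estimates \eqref{eq:errPh} provide approximation, and the primal stability \eqref{eq:stab:ws} recovers the energy norm. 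The mesh condition \eqref{eq:h0} is precisely what is needed so that the small parameter $k^3h^2\tilde C^*(1+\vep\Linf{\hat u^*+\uinc}^2)^2$ can be absorbed in this argument, and the factor $(1+\vep\Linf{\hat u^*+\uinc}^2)^{1/2}$ appearing in $\tilde C_1^*$ originates when the cubic nonlinearity is carried through the duality pairing.

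With discrete stability in hand, the Nirenberg inequalities \eqref{ni} convert each of the three source terms into bounds in terms of energy norms: the purely quadratic piece is bounded by $\tilde C_1^*\cni^2 k^{d/2}\vep(10\Linf{\hat u^*+\uinc}+6\cni k^{1-d/2}\He{e_h^l})\He{e_h^l}^2$, the mixed term by $\tilde C_1^*\cni^2 k^{d/2}\vep\cdot 10\Linf{\hat u^*+\uinc}\He{e_h^l}\He{e_h^{l+1}}$, and the $\gamma^*$-term by $24\tilde C_1^* k\vep\Linf{\hat u^*+\uinc}\gamma^*\He{e_h^{l+1}}+ \text{const}\cdot \tilde C_1^*k\vep(\gamma^*)^2\He{e_h^{l+1}}$. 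The choice $\gamma^*\le\min\{(48\tilde C_1^*k\vep\Linf{\hat u^*+\uinc})^{-1},(24\tilde C_1^*k\vep)^{-1/2}\}$ and the bootstrap $\He{e_h^l}\le\tilde\ta^*\le\cni^{-1}k^{1-d/2}\Linf{\hat u^*+\uinc}$ allow both linear-in-$e_h^{l+1}$ terms to be absorbed into the left-hand side, yielding $\He{e_h^{l+1}}\le\tilde N^*\He{e_h^l}^2$. Induction closes because $\tilde N^*(\tilde\ta^*)^2\le\tilde\ta^*$ by the definition of $\tilde\ta^*$, so $\He{e_h^{l+1}}\le\tilde\ta^*$ is preserved.

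The main obstacle is the discrete stability transfer in the second paragraph. Unlike in Lemma \ref{lem:errap}, the state $\hat u^*$ around which we linearize need not be the low-energy solution, so the nonlinear potential $|\hat u^*+\uinc|^2$ is not small and Lemma \ref{lem:stabl} cannot be invoked directly; one must instead run the Aubin--Nitsche duality against \eqref{eq:dws} and track every dependence on $\tilde C^*$ and $\Linf{\hat u^*+\uinc}$. This bookkeeping is what forces the mesh condition \eqref{eq:h0} to carry the precise factor $(1+\vep\Linf{\hat u^*+\uinc}^2)^2\tilde C^*$ and what fixes the form of the constant $\tilde C_1^*$.
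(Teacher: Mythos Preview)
Your proposal is correct and follows essentially the same route as the paper. The paper packages the discrete stability transfer you describe in your second paragraph as a stand-alone appendix lemma (Lemma~\ref{app:dstab}), proved by exactly the Schatz--Aubin--Nitsche duality argument you outline (with the dual stability \eqref{eq:stab:dws} alone sufficing; the energy-norm bound is recovered by testing with $w_h$ itself rather than by invoking \eqref{eq:stab:ws}), and then carries out the same decomposition $u_h^l+\uinc=(\hat u^*+\uinc)+e_h^*+e_h^l$, the same Nirenberg bounds on the resulting source terms, and the same absorption via the definitions of $\gamma^*$ and $\tilde\theta^*$.
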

\begin{proof}
Denote $e_h^l = u_h^l -u_h^*$ and $e_h^* = u_h^* - \hu^*$. Similar to \eqref{NT1}, from \eqref{eq:FEM} and \eqref{eq:dip}, it can be verified that
  \eqn{a^{u_h^l}(e^{l+1}_h,v_h) = -k^2\vep\bgS{2\,|{e_h^l}|^2 (u_h^*+\uinc+e_h^l) + (e_h^l)^2\overline{(u_h^*+\uinc)},v_h}_{\Omz} \quad\forall\, v_h \in V_h,}
which can be rewritten as   
\eqn{ a^{\hu^*}(e^{l+1}_h,v_h) =& -k^2\vep(G_1 - G_2, v_h)_{\Omz}\quad\forall v_h\in V_h,\quad\text{where}\\
G_1 =&\; 2\,|{e_h^l}|^2 (\hu^*+\uinc) + 2\,|e_h^l|^2 e_h^l + 2\,|e_h^l|^2 e_h^* +(e_h^l)^2\overline{(\hu^*+\uinc)} + (e_h^l)^2\overline{e_h^*}, \\
G_2 =&\; \BgS{2\,|e_h^l|^2 + 2\,|e_h^*|^2 + 4 \Re\bgS{{e_h^l}\overline{(\hu^*+\uinc)} + e_h^*\overline{(\hu^*+\uinc)} + e_h^l \overline{e_h^*}}} e_h^{l+1} \\
&\;+ \BgS{(e_h^l)^2 + (e_h^*)^2 + 2 \bgS{{e_h^l}{(\hu^*+\uinc)} + {e_h^*}(\hu^*+\uinc) + e_h^l {e_h^*}}} \overline{e_h^{l+1}}.
}  
Clearly, $e_h^{l+1}\in V_h$ is the FE solution to the linearized PML problem \eqref{eq:ws} with $g^* = -k^2\vep\oneo (G_1-G_2)$. 
By applying \eqref{eq:stab:dws} and Lemma \ref{app:dstab} with 
\eqn{p &=-Bk^2 - 2k^2 \vep\oneo \abs{\hu^*+\uinc}^2, \;\; q=-k^2\vep\oneo (\hu^*+\uinc)^2, \;\; \tilde c_0 \eqsim k^{-1}\tilde C^*,
}
and noting that $\norm{p}_{L^\infty(\D)} + \norm{q}_{L^\infty(\D)}\ls k^2+k^2 \vep\Linf{\hu^*+\uinc}^2$ and the fact that \eqref{eq:h0} implies \eqref{app:h0}, it follows from \eqref{app:stab:wh} that
\eq{ \label{eq:dstab}\He{e_h^{l+1}} \leq\tilde C_1^* \norm{g^*}_0, \quad\text{where}\quad \tilde C_1^* \eqsim \tilde C^*\big(1+\vep\Linf{\hu^*+\uinc}^2\big)^{\frac12}.
}
Applying \eqref{ni}, we have
\eqn{\norm{G_1}_{0,\Omz} &\leq 3 \Linf{\hu^* +\uinc} \norm{e_h^l}_{L^4(\Omz)}^2 + 2\norm{e_h^l}_{L^6(\Omz)}^3 + 3 \Linf{e_h^*} \norm{e_h^l}_{L^4(\Omz)}^2 \\
&\leq C_{\rm Nir}^2  \BgS{3k^{\frac{d}2-2}\Linf{\hu^*+\uinc} + 2 C_{\rm Nir} k^{d - 3}\He{e_h^l} + 3 k^{\frac{d}2-2}\gamma^*}  \He{e_h^l}^2\,.
}
Similarly, we can derive 
\eqn{ \norm{G_2}_{0,\Omz} \leq&\; 3\norm{e_h^l}_{L^6(\Omz)}^2 \norm{e_h^{l+1}}_{L^6(\Omz)} + 3\Linf{e_h^*}^2 \norm{e_h^{l+1}}_{0,\Omz} + 6 \Linf{e_h^*} \norm{e_h^l}_{L^4(\Omz)} \norm{e_h^{l+1}}_{L^4(\Omz)}\\
&\; + 6 \Linf{\hu^*+\uinc} \norm{e_h^l}_{L^4(\Omz)} \norm{e_h^{l+1}}_{L^4(\Omz)} + 6 \Linf{e_h^*} \Linf{\hu^*+\uinc} \norm{e_h^{l+1}}_{0,\Omz} \\
\leq&\;  C_{\rm Nir}^2 \BgS{3 C_{\rm Nir} k^{d - 3}\He{e_h^l} + 6 k^{\frac{d}2-2} \gamma^*+6 k^{\frac{d}2-2} \Linf{\hu^*+\uinc} } \He{e_h^l} \He{e_h^{l+1}} \\
&\; +    3 k^{-1}\gamma^* \big(\gamma^* + 2 \Linf{\hu^*+\uinc}\big) \He{e_h^{l+1}}.
}
Applying \eqref{eq:dstab} and supposing $\He{e_h^l} \leq \tilde\ta^* $, we get
\eqn{ \He{e_h^{l+1}} \leq&\; \tilde C_1^* k^2 \vep \bgS{\norm{G_1}_{0,\Omz} + \norm{G_2}_{0,\Omz}} \\
\leq&\; \tilde C_1^* C_{\rm Nir}^2 \vep\bgS{5k^{\frac{d}2}\Linf{\hu^*+\uinc} + 3 k^{\frac{d}2}\gamma^*} \bgS{\He{e_h^l}^2 + 2 \He{e_h^l}\He{e_h^{l+1}}} \\
&\; + \big(3 C_1^*  k\vep(\gamma^*)^2 + 6  C_1^* k\vep \Linf{\hu^*+\uinc}\gamma^*\big) \He{e_h^{l+1}} \\
\leq &\; \tfrac14 \tilde N^* \He{e_h^l}^2 + \big(\tfrac12 \tilde N^*\tilde\ta^* + \tfrac14\big) \He{e_h^{l+1}},
}
which yields $\He{e_h^{l+1}} \leq \tilde N^* \He{e_h^l}^2$ and moreover $\He{e_h^{l+1}} \leq \tilde N^* (\tilde\ta^*)^2 \le \tilde\ta^*$. Then the proof of \eqref{eq:dquad} follows by induction.
\end{proof}

\begin{remark}
Similarly to Remark \ref{rmk:quad-low}, under the conditions of Theorem~\ref{thm:stabpml}, the low-energy solution $\hu^*=\hu$ satisfies the stability condition \eqref{eq:stab:dws} with $\tilde C^* \eqsim 1$ and hence, $\tilde C_1^*\eqsim 1$. The mesh condition \eqref{eq:h0} is satisfied if $k^3h^2$ is sufficiently small.
Moreover, let $\Theta=\max\big\{ k^{d-2}\vep\abs{\ln h}^{2\bar{d}}\Mf^2,~k\vep\Linf{\uinc}^2\big\}$, by following the proofs of \eqref{eq:stab} and \eqref{eq:stab:uh}, we can get 
\eqn{ &k\vep \Linf{\hu^*-u_h^*}^2 \leq 2k\vep \big(\Linf{\hu^*}^2 + \Linf{u_h^*}^2\big) \leq 4\tilde C_1 k^{d-2}\vep\abs{\ln h}^{2\bar{d}}\Mf^2 \leq 4\tilde C_1\Theta \quad \mbox{and}}
\eqn{
&k\vep \Linf{\hu^*-u_h^*} \Linf{\hu^*+\uinc} \leq k\vep \big(2\Linf{\hu^*}^2 + \Linf{u_h^*}^2 + \Linf{\uinc}^2\big) \leq (3\tilde C_1+1)\Theta 
}
for some constant $\tilde C_1$. If $\Theta$ is sufficiently small such that $4\tilde C_1\Theta \leq (24\tilde C_1^*)^{-1}$ and $(3\tilde C_1 + 1)\Theta \leq (48\tilde C_1^*)^{-1}$, then the above two estimates yield
\eqn{ \Linf{\hu^*-u_h^*} \leq (24 \tilde C_1^* k \vep)^{-1/2} \qaq \Linf{\hu^*-u_h^*} \leq \big(48\tilde C_1^* k\vep \Linf{\hu^*+\uinc}\big)^{-1}.
}
Then, noting the definition of $\gamma^*$, the condition $\Linf{\hu^*-u_h^*} \leq \gamma^*$ is also satisfied.
Therefore, $\{u_h^l\}_{l\geq 1}$, defined by \eqref{eq:dip}, converges quadratically to $u_h^*$, as long as $u_h^0$ is close enough to $u_h^*$. 
\end{remark}

\subsection{CIP-FEM}
It is well known that the linear Helmholtz equation with high wave number suffers from the pollution effect. 
Extensive studies have been carried out for estimating and reducing the pollution effect in the literature (see, e.g., \cite{babuska2000,fengwu2009,hiptmair2009,melenk2010}). The CIP-FEM, which was first proposed 
in \cite{douglas1976} for elliptic and parabolic problems in 1970s, has recently shown great potential in solving the Helmholtz problem with high wave number \cite{wu2013,zhuwu2013,duwu2015,liwu2019}. The CIP-FEM uses the same approximation space as the FEM but modifies the sesquilinear form of the FEM by adding a least-squares term penalizing the jump of the normal derivative of the discrete solution at interior mesh interfaces. In this subsection, we introduce the CIP-FEM for the nonlinear PML problem \eqref{varnltpml}. Let $\EhI$ be the set of edges/faces of $\Th$ in $\Om$. For every $e=\partial K_1 \cap \partial K_2\in\EhI$, we define the jump $[v]$ of $v$ on $e$ as follows:
\begin{equation*}
[v]|_e:=v|_{K_1}-v|_{K_2}.
\end{equation*}
Now we introduce the energy space $V$ and the sesquilinear form $a_h (\cdot,\cdot)$ on $V\times V$ as
\begin{align*}
V &:= H_0^1(\D)\cap \prod_{K\in\Th} H^2(K), \\
a_h (u,v) &:= a(u,v)+J(u,v) \quad \forall u,v\in V, \\
J(u,v) &:= \sum\limits_{e\in\EhI} \gamma_e h_e\ine{[\na u\cdot n]}{[\na v\cdot n]}
\end{align*}
where the penalty parameters $\gamma_e$ for $e\in\EhI$ are numbers with nonpositive imaginary parts and $h_e:=\diam(e)$. It is clear that, if $\hu\in H^2(\OchO)$ is the solution to the nonlinear PML problem \eqref{eq:PML},
then $J(\hu,v) = 0$ for any $v\in V$. The CIP-FEM for \eqref{varnltpml} 
reads as: find $u_h\in V_h$ such that
\begin{equation}\label{eq:cip-fem}
a_h(u_h,v_h)-k^2\vep\big(\abs{u_h+\uinc}^2(u_h+\uinc),v_h\big)_{\Omz} = (f,v_h)_{\Om}\quad \forall v_h\in V_h.
\end{equation}
The Newton's iteration for solving the CIP-FEM \eqref{eq:cip-fem} reads: for a given $u_h^0\in V_h$, find ${u_h^{l+1}} \in V_h,~l=0,1,2,\cdots,$ such that

\begin{equation}\label{eq:dip0} 
\begin{aligned}
  a_h(u_h^{l+1},v_h)&- k^2\vep\big(2\,| u_h^l+\uinc |^2 u_h^{l+1} + (u_h^l+\uinc)^2 \overline{u_h^{l+1}},v_h\big)_{\Omz} \\
  = (f,v_h)_{\Om} &- k^2\vep\big(2\,|u_h^l+\uinc|^2 u_h^l - (u_h^l+\uinc)^2 \overline{\uinc},v_h\big)_{\Omz} 
\end{aligned}
\end{equation}

\begin{remark} There are several important remarks about the CIP-FEM \eqref{eq:cip-fem}: 

(i) The CIP-FEM reduces to the standard FEM if we take $\gamma_e\equiv 0$.

(ii) The sesquilinear form $a(\cdot,\cdot)$ is coercive in the PML region $\hOm$ (cf. \cite[Lemma 3.6]{liwu2019}), and hence, the PML problem behaves more like an elliptic one. Based on this consideration, 
penalty terms in $J(\cdot,\cdot)$ are only added for those edges/faces in $\Om$ 
in order to reduce the pollution error.

(iii) By combining the analyses in subsection \ref{sec:pee} with the techniques in \cite{liwu2019}, 
we can also derive the error estimate \eqref{eq:err} for the CIP-FEM \eqref{eq:cip-fem} and the quadratic convergence \eqref{eq:dquad} 
for its Newton's iteration \eqref{eq:dip0}.

(iv) The modified Newton’s method for solving the CIP-FEM reads:
\begin{equation}\label{eq:dip2}
a_h(u_h^{l+1},v_h) - 2k^2\vep\big(| u_h^l+\uinc |^2 u_h^{l+1},v_h\big)_{\Omz}
= (f,v_h)_{\Om} - k^2\vep\big(| u_h^l+\uinc |^2 (u_h^l - \uinc),v_h\big)_{\Omz}\quad\forall v_h\in V_h,
\end{equation}
and the frozen-nonlinearity method for solving the CIP-FEM reads:
\begin{equation}\label{eq:dip1} 
    a_h({u_h^{l+1}},v_h)- k^2\vep\big( | {u_h^l}+\uinc | ^2 {u_h^{l+1}}, v_h\big)_{\Omz} = (f,v_h)_{\Om} + k^2\vep \big( | {u_h^l}+\uinc | ^2 \uinc, v_h \big)_{\Omz} \quad\forall v_h\in V_h. 
  \end{equation}
\end{remark}

\section{Numerical results} \label{s:5}
In this section, we simulate the NLH \eqref{eq:Helm}--\eqref{eq:Somm} with $\Om=\B_1$ and $\Omz=\B_{1/2}$. The problem is first truncated by the PML technique and then discretized by the linear (CIP-)FEM. We take the following penalty parameter 
\begin{equation}\label{eq:pp}
  \gamma_e= -\frac{\sqrt{3}}{24}-\frac{\sqrt{3}}{1728}(k h_e)^2
\end{equation}
for CIP-FEM, which is obtained by a dispersion analysis for 2D problems on equilateral triangulations \cite{hw12}. 
The stop criterion used in the iterations is 
\begin{equation}\label{eq:tol}
\frac{\he{u_h^l - u_h^{l-1}}}{\he{u_h^l}} <  \mbox{tol} =10^{-6}, \quad\text{for some } l\geq 1. 
\end{equation}

\subsection{Accuracy and pollution effect}
We choose the exact solution (scattered field) $u$ (cf. \cite{liwu2019}) and incident wave $\uinc$ to be
\[u = \begin{cases}
 \frac{\i\pi}{2k}H_1^{(1)}(k)J_0(kr)-\frac{1}{k^2},&\text{in }\Om,\\
 \frac{\i\pi }{2k} J_1(k) H_0^{(1)}(kr),&\text{otherwise},
 \end{cases}\qaq
 \uinc= \frac{J_0(kr)}{k^{1.5}},
\]
respectively. The Kerr constant is chosen as $\vep=k^{-2}$ and $f$ satisfies the equation \eqref{eq:Helm}. 
The PML parameter and PML thickness are set by $\si_0=4$ and $L=1$ which satisfy the condition \eqref{eq:assumption}.
The left graph of Figure \ref{fig:nlex1} plots the relative $H^1$-errors of the FEM, CIP-FEM and FE interpolations for $k=10, 50$, and $100$, respectively. As is shown, for $k=10$, both the error curves of FE and CIP-FE solutions fit that of the FE interpolation very well, which indicate that the pollution effects do not work for small wave number. While for large $k$, e.g., $k=50$, the errors of FEM oscillate around $100\%$ before decaying in a range of mesh sizes far from the decaying point of the corresponding FE interpolations, and even farther for  $k=100$. The errors of CIP-FE solutions behave similarly, but begin to decay much earlier than FEM, which implies that the CIP-FEM reduces the pollution effect greatly.
Next, we fix $kh=\pi/5$, which implies that about $10$ degrees of freedom are set per wave-length, and then plot the relative $H^1$-errors of the FE solutions, the CIP-FE solutions, and the FE interpolations for increasing wave numbers $k$ in one figure (see the right graph of Figure \ref{fig:nlex1}). 
It is obvious that the pollution effect of FEM appears when $k$ becomes greater than some value less than 50, while the CIP-FEM is almost pollution-free for $k$ up to 200. Compared with FEM, CIP-FEM does effectively reduce the pollution error. 
\begin{figure}[tbp]
\begin{minipage}[c]{0.5\textwidth}
\centering
\includegraphics[height=5.5cm]{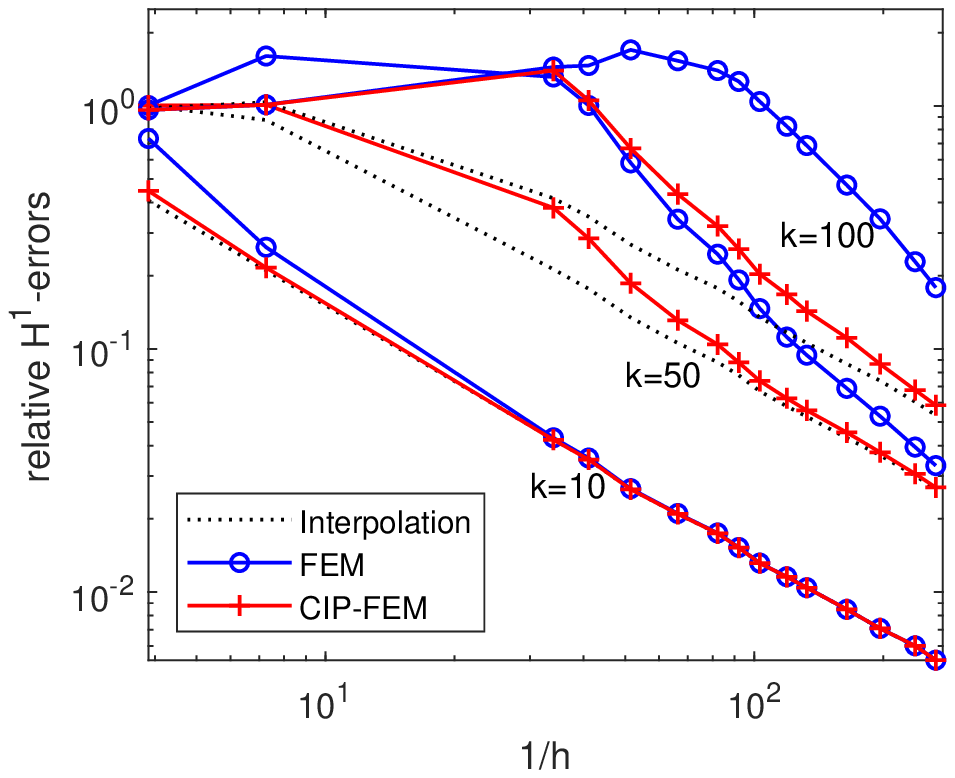}
\end{minipage}%
\begin{minipage}[c]{0.5\textwidth}
\centering
\includegraphics[height=5.5cm]{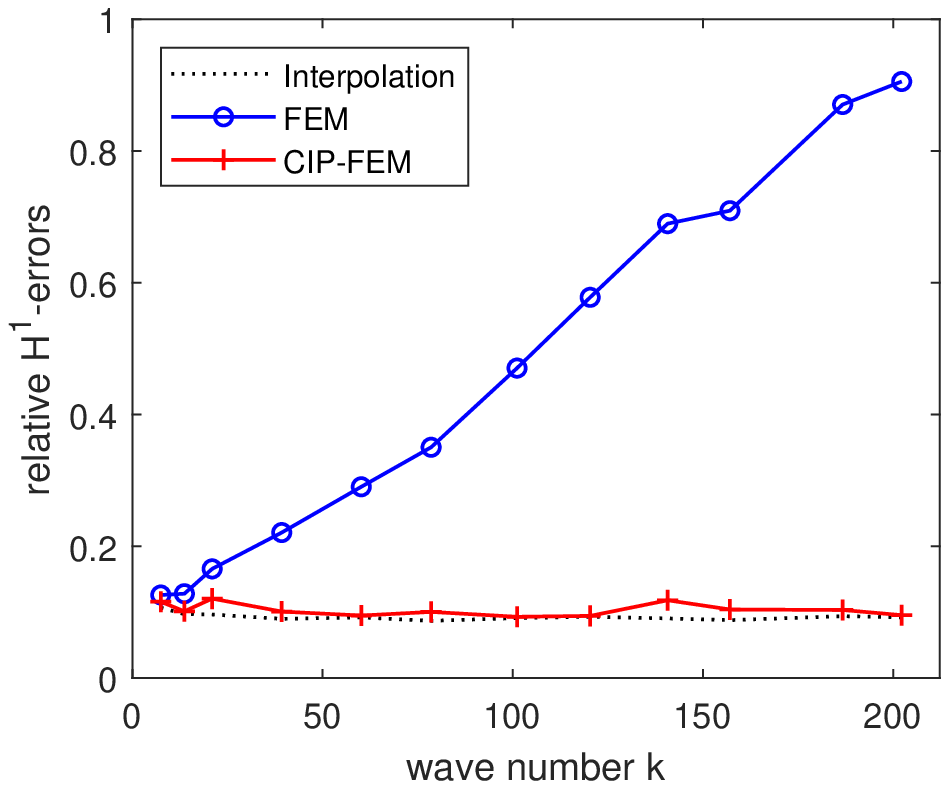}
\end{minipage}
\caption{\itshape{The relative $H^1$-errors of the FE interpolation, the FE solution, and the CIP-FE solution.}}
\label{fig:nlex1}
\end{figure}

\subsection{Optical bistability}
Optical bistability (see e.g. \cite{boyd2008}) refers to the situation in which two different output intensities are possible for a given input intensity. It can be used as a switch in optical communication and in optical computing. We consider the NLH with $k=k_0=5.4$ in $\Omz^c$ and $k=k_1=3.5k_0$ in $\Omz$ (cf. \cite{wuzou2018,yuan2017}). Set $h=10^{-2}$ and Kerr constant $\vep=10^{-12}$, the incident wave $\uinc=Ie^{\i k_0x}$ and source term $f_0=0$, that is 
\[f=
 \begin{cases}
 (k_1^2-k_0^2)\uinc, & \text{in } \Omz,\\
 0, & \text{otherwise}.
 \end{cases}
\]
In this example, we set $\si_0=10$ and $L=1/4$ to reduce the computational area. 
Figure \ref{fig:nlex3_1} plots the energy norm of the scattered field computed by the Newton's method \eqref{eq:dip0} versus that of the incident wave $\uinc$. A reference incident wave $\uinc^0=I_0 e^{\i k_0x}$ with $I_0=10^5$ is introduced for enhancing the  nonlinear effect. Obviously, the larger the amplitude $I$, the stronger the intensity of the incident wave. 
The energy of the scattered field jumps to the upper branch from the lower branch as the intensity of the incident wave increases to $I\approx 264651$, and falls down from the upper branch to the lower branch as the intensity decreases to $I \approx 241294$.
As shown in the figure, for $241294< I < 264651$, the NLH has three different solutions, where the two solutions in the upper and lower branches are presumably stable, and the solution in the middle branch is unstable. This phenomenon corresponds to the optical bistability. For $I=255000$, the electric field patterns of these three solutions corresponding to the three circled points in Figure~\ref{fig:nlex3_1} are shown in Figure \ref{fig:nlex3_2}. As expected, the nonlinear phenomenon of optical bistability has been successfully simulated. 

Finally, we compare  the convergence rates of all the three methods \eqref{eq:dip0}--\eqref{eq:dip1} by solving the solution in the lower branch at $I=263000$ with the same initial values of zero. We use the solution computed by the Newton's method \eqref{eq:dip0} with a small tolerance $\mbox{tol}=10^{-13}$ (see  \eqref{eq:tol}) as the ``exact" CIP-FE solution $u_h$. The relative error and convergence order are defined by 
\[e_h^l:= \frac{\He{u_h^{l}-u_h}}{\He{u_h}} \qaq \mbox{order} := \frac{\log e_h^{l+1} - \log e_h^l}{\log e_h^l - \log e_h^{l-1}}, \quad l\geq 1.
\]
The numerical results are listed in Table~\ref{tab:steps}, which shows, as expected, that the Newton’s method converges quadratically while the other two methods converge linearly. There is no doubt, the Newton’s method converges much faster.

\begin{figure}[tbp]
\centering
\includegraphics[width=6cm]{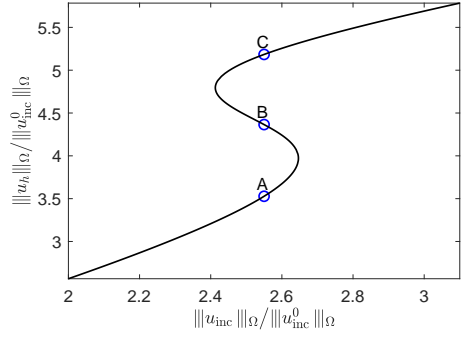}
\caption{\itshape{X-axis: incident wave; Y-axis: scattered field. $I$ is $200000:310000$.}}
\label{fig:nlex3_1}
\end{figure}
\begin{figure}[t]
\centering
\includegraphics[width=13cm]{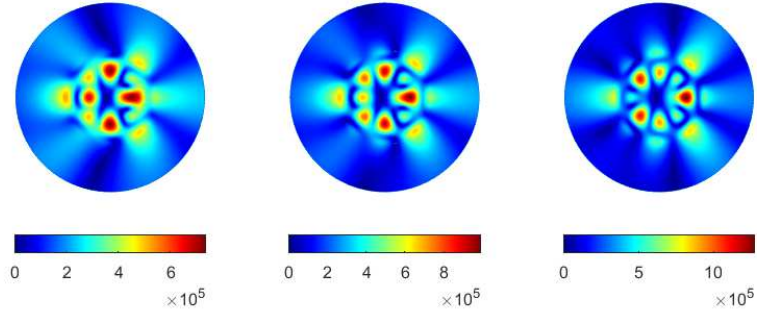}
\caption{\itshape{Scattered field patterns of the three solutions marked as the points {\rm A}, {\rm B}, and {\rm C} in Figure \ref{fig:nlex3_1}.}}
\label{fig:nlex3_2}
\end{figure}


\begin{table}[H]
  \centering
    \scalebox{0.98}{\begin{tabular}{|c|c|c|c|c|c|c|c|c|c|c|}
  \hline 
  \multicolumn{2}{|c|}{step} & 1 & 2 & 3 & 4 & 5 & 6 & 82 & 120 \\
  \hline
  \multirow{2}*{Newton} & error & 7.48e-2 & 2.85e-3 & 4.65e-3 & 1.40e-4 & 1.27e-7 & 9.36e-14  &\myslbox  &\myslbox   \\
  \cline{2-10}
  & order & -  & 0.37 & 1.88 & 1.93 & 2.00  & 2.02  &\myslbox  &\myslbox \\
  \hline
  \multirow{2}*{modified Newton} & error  & 4.11e-2 & 2.91e-2 & 2.05e-2 & 1.53e-2 & 1.12e-2  & 8.45e-3 & 9.10e-12  & \myslbox \\
\cline{2-10}
& order  & -  & 0.11 & 1.01 & 0.84 & 1.07 & 0.91 & 1.00  & \myslbox \\
\hline
\multirow{2}*{frozen-nonlinearity} & error & 1.72e-1 & 1.19e-1 & 8.53e-2 & 6.34e-2 & 4.84e-2 & 3.76e-2  & 1.35e-8 & 9.22e-12 \\
\cline{2-10}
& order  & - & 0.21 & 0.90 & 0.89 & 0.91 & 0.93 & 1.00 & 1.00 \\
\hline
  \end{tabular}}
  \caption{\it Relative errors and convergence orders of the iterative methods. 
  }
  \label{tab:steps}
\end{table}

\appendix
\renewcommand{\theequation}{A.\arabic{equation}}
\renewcommand{\thetheorem}{A.\arabic{theorem}}
\renewcommand{\thefigure}{A.\arabic{figure}}
\setcounter{equation}{0}
\setcounter{theorem}{0}
\setcounter{figure}{0}
\section*{Appendix: A discrete stability estimate} 
\begin{lemma}\label{app:dstab} Given complex-valued functions $p,\,q\in L^{\infty}(\D)$, and $\tilde g \in L^2(\D)$,  
consider $\tilde w\in H_0^1(\D)$ solving 
\eq{\label{app:tw} -\nabla\cdot (A\nabla \tilde w) + p\tilde w + \overline{q \tilde w} = \tilde g}
in the weak sense. Suppose there exists a positive constant $\tilde c_0$ such that the following stability estimate holds:
\eq{\label{app:stab:tw} \norm{\tilde w}_0  \leq \tilde c_0 \norm{\tilde g}_0.}
Let $w_h\in V_h$ be the finite element solution to
\eq{\label{app:wh} (A\nabla w_h, \nabla v_h) + (p w_h, v_h) + (q \overline{w_h}, v_h) = (g, v_h) \quad \forall\, v_h \in V_h.}
Then the FE solution satisfies the discrete stability estimates
\eq{\label{app:stab:wh} \norm{w_h}_0\ls \tilde c_0\norm{g}_0\quad\text{and}\quad  \norm{\na w_h}_0 \ls\big(\norm{p}_{L^\infty(\D)} + \norm{q}_{L^\infty(\D)} + \tilde c_0^{-1} \big)^{\frac12} \tilde c_0\norm{g}_0,}
under the condition that the following quantity is sufficiently small:
\eq{\label{app:h0} \big(1+ (\norm{p}_{L^\infty(\D)} + \norm{q}_{L^\infty(\D)} ) \tilde c_0\big) \max\big\{ \norm{p}_{L^\infty(\D)} + \norm{q}_{L^\infty(\D)}, \tilde c_0^{-1} \big\}h^2\,.}

\end{lemma}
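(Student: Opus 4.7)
The plan is to adapt the Schatz-type duality argument from the proof of Lemma~\ref{lem:errap} to the generalized $\R$-bilinear form $b(u,v):=(A\na u,\na v)+(pu,v)+(q\bar u,v)$. The strategy has three stages: transfer the given continuous stability \eqref{app:stab:tw} to the FE solution via comparison with a continuous solution of the same data; derive a piecewise $H^2$-regularity bound for this solution; then close an Aubin--Nitsche duality argument for $\LtD{w-w_h}$, absorbing lower-order perturbations using the mesh condition \eqref{app:h0}. The gradient bound is then obtained by directly testing \eqref{app:wh} against $w_h$.

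First, let $w\in H_0^1(\D)$ be the weak solution to \eqref{app:tw} with data $g$; by \eqref{app:stab:tw}, $\LtD{w}\le\tilde c_0\LtD{g}$. Rewriting the equation as $-\na\cdot(A\na w)=g-pw-\overline{qw}$, whose right-hand side is $L^2$-bounded by $\ls(1+(\norm{p}_{L^\infty(\D)}+\norm{q}_{L^\infty(\D)})\tilde c_0)\LtD{g}$, and invoking piecewise $H^2$-regularity of the divergence-form operator (valid because $\Gamma$ is fitted by $\Th$) yields $\Ht{w}{\OchO}\ls(1+(\norm{p}_{L^\infty(\D)}+\norm{q}_{L^\infty(\D)})\tilde c_0)\LtD{g}$. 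Since $w$ and $w_h$ solve the same weak equation on $V_h$, Galerkin orthogonality $b(w-w_h,v_h)=0$ holds. To bound $\LtD{w-w_h}$ by duality, set up the adjoint problem: find $\tilde z\in H_0^1(\D)$ with $b(v,\tilde z)=(v,w-w_h)$ for all $v\in H_0^1(\D)$. Because $b$ is $\R$-bilinear but not $\Cm$-sesquilinear (due to the $q\bar u$ term), this adjoint is interpreted through the real--imaginary decomposition of \eqref{app:tw} into a $2\times 2$ real elliptic system; the adjoint of that system is of the same type as \eqref{app:tw} (with $p,q$ replaced by conjugated analogs), so \eqref{app:stab:tw} transfers verbatim and yields $\LtD{\tilde z}\ls\tilde c_0\LtD{w-w_h}$ together with $\Ht{\tilde z}{\OchO}\ls(1+(\norm{p}_{L^\infty(\D)}+\norm{q}_{L^\infty(\D)})\tilde c_0)\LtD{w-w_h}$. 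Combining Galerkin orthogonality, the elliptic projection and interpolation estimates \eqref{eq:errPh}, and standard manipulations produces
\[\LtD{w-w_h}^2\ls h^2\,\Ht{w}{\OchO}\,\Ht{\tilde z}{\OchO}+\bgS{\norm{p}_{L^\infty(\D)}+\norm{q}_{L^\infty(\D)}}\,h^2\,\LtD{w-w_h}\,\Ht{\tilde z}{\OchO},\]
and the mesh condition \eqref{app:h0} is calibrated precisely so that the second term absorbs into the left-hand side, leaving $\LtD{w-w_h}\ls\tilde c_0\LtD{g}$. Triangle inequality then gives the first estimate of \eqref{app:stab:wh}.

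For the gradient bound, test \eqref{app:wh} with $v_h=w_h$ and take real parts. The G\aa rding-type inequality $\Re(A\na v,\na v)\gs\LtD{\na v}^2-C(k,\siz)\LtD{v}^2$ that follows from \eqref{h12d}--\eqref{h13d} (with the $L^2$ remainder absorbed into $p$ in the intended applications, so $C(k,\siz)\ls\norm{p}_{L^\infty(\D)}$), combined with $|(pw_h,w_h)+(q\bar w_h,w_h)|\ls\bgS{\norm{p}_{L^\infty(\D)}+\norm{q}_{L^\infty(\D)}}\LtD{w_h}^2$, gives
\begin{align*}
\LtD{\na w_h}^2&\ls\LtD{g}\LtD{w_h}+\bgS{\norm{p}_{L^\infty(\D)}+\norm{q}_{L^\infty(\D)}}\LtD{w_h}^2\\
&\ls\bgS{\norm{p}_{L^\infty(\D)}+\norm{q}_{L^\infty(\D)}+\tilde c_0^{-1}}\tilde c_0^2\LtD{g}^2,
\end{align*}
after inserting the first estimate in \eqref{app:stab:wh}. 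Taking square roots recovers the second estimate.

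The main obstacle is the identification of the dual problem for the non-sesquilinear form $b$: the naive complex adjoint $b(v,\tilde z)=(v,\chi)$ is over-determined because of the $(q\bar u,v)$ term, so one must pass to the real--imaginary system to make sense of duality. Once this is done, the adjoint system has the same structure as the primal, and \eqref{app:stab:tw} applies with the same constant $\tilde c_0$. The second technical point is the kickback: after collecting all interpolation and perturbation contributions, the offending coefficient scales like $h^2\bgS{\norm{p}_{L^\infty(\D)}+\norm{q}_{L^\infty(\D)}+\tilde c_0^{-1}}\bgS{1+(\norm{p}_{L^\infty(\D)}+\norm{q}_{L^\infty(\D)})\tilde c_0}$, and \eqref{app:h0} is the exact smallness needed to close the loop.
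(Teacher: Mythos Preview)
Your proof is correct and shares the essential ingredients with the paper's argument: the real--imaginary decomposition to make sense of duality for the non-sesquilinear form $b$, the elliptic projection $P_h$ to kill the leading gradient term, piecewise $H^2$-regularity, and testing \eqref{app:wh} with $v_h=w_h$ for the gradient bound. The organizational difference is that the paper does \emph{not} introduce a continuous solution $w$ with data $g$; instead it runs the duality argument directly on $\LtD{w_h}^2$ by setting up the adjoint problem $-\nabla\cdot(\bar A\nabla z)+\bar p z+q\bar z=w_h$, observes that $\bar z$ solves exactly \eqref{app:tw} with $\tilde g=\overline{w_h}$ (so \eqref{app:stab:tw} applies with the \emph{same} $\tilde c_0$, not a ``conjugated analog''), and then uses $(A\nabla w_h,\nabla(z-P_hz))=0$ together with the discrete equation to reduce everything to lower-order terms plus $(g,z)$. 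Your route via the error $w-w_h$ and triangle inequality is the more conventional Aubin--Nitsche packaging and yields an error bound as a by-product; the paper's direct approach is slightly leaner (one fewer object) and makes it transparent that the adjoint is literally the conjugate of the primal rather than a perturbed problem. Two small sharpenings: first, the adjoint really is \eqref{app:tw} after complex conjugation, so there is no need to invoke stability for ``conjugated analogs'' of $p,q$; second, the coercivity $\Re(A\nabla v,\nabla v)\gtrsim\LtD{\nabla v}^2$ actually holds without any $L^2$ remainder (this is what the derivations preceding \eqref{h12d}--\eqref{h13d} show for the pure $A$-gradient part), so the G\aa rding defect you introduce and then absorb into $p$ is unnecessary.
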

\begin{proof} First, applying  \cite[Theorem 4.5]{huangzou2007} and using \eqref{app:stab:tw}, we obtain the following regularity estimate:
\eqn{
  \norm{\tilde w}_{2,\OchO} \ls \norm{\tilde g-p\tilde w -\overline{q \tilde w}}_0\ls\big (1+\big(\norm{p}_{L^\infty(\D)} + \norm{q}_{L^\infty(\D)} \big) \tilde c_0\big)\norm{\tilde g},
}
that is, there exists a positive constant $\tilde c_2\eqsim 1+\big(\norm{p}_{L^\infty(\D)} + \norm{q}_{L^\infty(\D)} \big) \tilde c_0$ such that
\eq{\label{app:stab:tw2}
  \norm{\tilde w}_{2,\OchO}\le \tilde c_2\norm{\tilde g}.}
We use the convention $v = v_r +\i v_i$ for all complex-valued functions, where $v_r$ and $v_i$ are both real-valued. 
Choosing $v_h :\D \to \R$ be any real function in \eqref{app:wh} and taking the real and imaginary parts, we get
\begin{align}
(A_r \na w_{hr} - A_i \na w_{hi}, \na v_h) + (p_r w_{hr} - p_i w_{hi} + q_r w_{hr} + q_i w_{hi}, v_h) &= (g_r, v_h), \label{app:wh1} \\
(A_i \na w_{hr} + A_r \na w_{hi}, \na v_h) + (p_i w_{hr} + p_r w_{hi} + q_i w_{hr} - q_r w_{hi}, v_h) &= (g_i, v_h). \label{app:wh2}
\end{align}
Let $z\in H_0^1(\D)$ solve the dual problem $-\na \cdot (\overline{A} \na z) + \overline{p}z + q\overline{z} = w_h$ in $\D$, which can be rewritten as
\begin{align}
-\na \cdot (A_r \na z_r) - \na \cdot (A_i \na z_i) + (p_r z_r + p_i z_i) + (q_r z_r + q_i z_i) &= w_{hr}, \label{app:z1}\\
\na \cdot (A_i \na z_r) - \na \cdot (A_r \na z_i) + (p_r z_i - p_i z_r) + (q_i z_r - q_r z_i) &= w_{hi}.\label{app:z2}
\end{align}
Testing \eqref{app:z1} and \eqref{app:z2} by $w_{hr}$ and $w_{hi}$, respectively, and using \eqref{app:wh1}--\eqref{app:wh2} and the fact that $A$ is symmetric, 
\eqn{\norm{w_h}_0^2 =&\; (A_r \na z_r + A_i \na z_i, \na w_{hr}) + (p_r z_r + p_i z_i + q_r z_r + q_i z_i, w_{hr}) \\
 &\; - (A_i \na z_r - A_r \na z_i, \na w_{hi}) + (p_r z_i - p_i z_r + q_i z_r - q_r z_i, w_{hi}) \\
=&\; (A_r \na w_{hr} - A_i \na w_{hi}, \na z_r) +  (p_r w_{hr} - p_i w_{hi} + q_r w_{hr} + q_i w_{hi}, z_r) \\
 &\; + (A_i \na w_{hr} + A_r \na w_{hi}, \na z_i) + (p_i w_{hr} + p_r w_{hi} + q_i w_{hr} - q_r w_{hi}, z_i) \\
=&\; (A_r \na w_{hr} - A_i \na w_{hi}, \na (z_r - (P_h z)_r)) +  (p_r w_{hr} - p_i w_{hi} + q_r w_{hr} + q_i w_{hi}, z_r - (P_h z)_r) \\
 &\; + (A_i \na w_{hr} + A_r \na w_{hi}, \na (z_i - (P_h z)_i)) + (p_i w_{hr} + p_r w_{hi} + q_i w_{hr} - q_r w_{hi}, z_i - (P_h z)_i) \\
 &\; + (g_r, (P_h z)_r - z_r) + (g_r, z_r) + (g_i, (P_h z)_i - z_i) + (g_i, z_i) \\
=&\; (p_r w_{hr} - p_i w_{hi} + q_r w_{hr} + q_i w_{hi}, z_r - (P_h z)_r) + (p_i w_{hr} + p_r w_{hi} + q_i w_{hr} - q_r w_{hi}, z_i - (P_h z)_i) \\
 &\; + (g_r, (P_h z)_r - z_r) + (g_r, z_r) + (g_i, (P_h z)_i - z_i) + (g_i, z_i),
}
where $P_h$ is the elliptic projection defined by \eqref{eq:Ph} and we used $(A\na w_h, \na (z-P_h z)) = 0$ to derive the last equality. Noting that $\overline{z} \in H_0^1(\D)$ is the solution to \eqref{app:tw} with $\tilde g = \overline{w_h}$, 
we deduce from \eqref{app:stab:tw} and \eqref{app:stab:tw2} that 
\eqn{\norm{z}_0\le \tilde c_0 \norm{w_h}_0\quad\text{and}\quad \norm{z}_{2,\OchO} \leq \tilde c_2\norm{w_h}_0.}
Therefore, the error estimate \eqref{eq:errPh} yields
\eqn{\norm{w_h}_0^2 \leq&\; C_{\mathcal E} \tilde c_2 h^2 \big( (\norm{p}_{L^\infty(\D)} + \norm{q}_{L^\infty(\D)}) \norm{w_h}_0^2  +   \norm{g}_0 \norm{w_h}_0 \big) +  \tilde c_0 \norm{g}_0 \norm{w_h}_0, 
}
where $C_{\mathcal E}$ is the invisible constant in \eqref{eq:errPh}. Letting  
\eqn{h_0^2 = \min \big\{ \big( 2 C_{\mathcal E}\tilde c_2  (\norm{p}_{L^\infty(\D)} + \norm{q}_{L^\infty(\D)}) \big)^{-1}, (2 C_{\mathcal E}\tilde c_2 )^{-1}\tilde c_0 \big\}, }
we readily get the first estimate in \eqref{app:stab:wh} for $0<h\leq h_0$.

On the other hand, letting $v_h=w_h$ in \eqref{app:wh} and using the derivations of \eqref{h12d}--\eqref{h13d} give
\eqn{\norm{\na w_h}_0^2 &\ls \Re (A\na w_h,\na w_h)
\ls \big(\norm{p}_{L^\infty(\D)} + \norm{q}_{L^\infty(\D)} \big) \norm{w_h}^2_0+\norm{g}_0\norm{w_h}_0\\
&\ls \big(\norm{p}_{L^\infty(\D)} + \norm{q}_{L^\infty(\D)} +  \tilde c_0^{-1}\big) \tilde c_0^2 \norm{g}_0^2.
}
Hence, \eqref{app:stab:wh} follows and the proof of this lemma is completed.
\end{proof}


\bibliographystyle{plain}
\bibliography{NLHPML}

\end{document}